\newtheorem{theorem}{Theorem}[section]
\newtheorem{lemma}[theorem]{Lemma}
\newtheorem{corollary}[theorem]{Corollary}
\newtheorem{proposition}[theorem]{Proposition}
\newtheorem{theoremA}{Theorem}
\theoremstyle{definition}
\newtheorem{remark}[theorem]{Remark}
\newtheorem{definition}[theorem]{Definition}
\theoremstyle{plain}
\newcommand{\cP}{\mathcal P}
\newcommand{\cQ}{\mathcal Q}
\newcommand{\cS}{\mathcal S}
\def\Cz{\mathbb{C}}
\def\Fz{\mathbb{F}}
\def\Nz{\mathbb{N}}
\def\Qz{\mathbb{Q}}
\def\Rz{\mathbb{R}}
\def\Zz{\mathbb{Z}}
\newcommand{\mfp}{\mathfrak p}
\newcommand{\mfq}{\mathfrak q}
\newcommand{\ab}{\mbox{{\tiny \textup{ab}}}}
\newcommand{\Gal}{\mathrm{Gal}}
\newcommand{\Gkab}{\textup{G}^{\ab}_K}
\newcommand{\Hkab}{\widecheck{G}_{K}}
\newcommand{\Hlab}{\widecheck{G}_{K}}
\newcommand{\Hkabx}{\widecheck{G}_{K'}}
\newcommand{\Hlabx}{\widecheck{G}_{K'}}
\newcommand{\Frob}[1]{\textup{Frob}_{#1}}
\newcommand{\chipos}{\chi^{{+}}}
\newcommand{\chiram}[1]{\chi_{\textup{ram}, {#1}}}
\newcommand{\chiunram}[1]{\chi_{\textup{unr}, {#1}}}
\newcommand{\Lsp}[1]{L_p(A/K, #1, T)}
\newcommand{\Lspx}[1]{L_p(A'/K', #1, T)}
\newcommand{\ap}{\big(a_{\mfp}\big)}
\newcommand{\aphip}{\big(a'_{\phi(\mfp)}\big)}
\newcommand{\apf}{\big(a_{\mfp}\big)_{f/f_{\mfp}}}
\newcommand{\aphipf}{\big(a'_{\phi(\mfp)}\big)_{f/f_{\phi(\mfp)}}}
\newcommand{\aqf}{\big(a'_{\mfq}\big)_{f/f_{\mfq}}}
\newcommand{\apfi}{\big(a_{\mfp_i}\big)_{f/f_{\mfp_i}}}
\newcommand{\apfj}{\big(a_{\mfp_j}\big)_{f/f_{\mfp_j}}}
\newcommand{\aqfi}{\big(a'_{\mfq_i}\big)_{f/f_{\mfq_i}}}
\newcommand{\aqfj}{\big(a'_{\mfq_j}\big)_{f/f_{\mfq_j}}}
\newcommand{\cQf}{\cQ_{K, <f}}
\newcommand{\cQfx}{\cQ_{K', <f}}
\newcommand{\sumf}{\sum_{\mfp \in \cQ_{K, f}}}
\newcommand{\sumfx}{\sum_{\mfq \in \cQ_{K', f}}}
\newcommand{\sumfpx}{\sum_{\mfq \in \cQ_{K', f}^+}}
\newcommand{\DistTo}{\xrightarrow{
   \,\smash{\raisebox{-0.45ex}{\ensuremath{\scriptstyle\sim}}}\,}}
\newcommand\restr[2]{{
  \left.\kern-\nulldelimiterspace 
  #1 
  \vphantom{\big|} 
  \right|_{#2} 
  }}
\newcommand{\new}[1]{\textcolor{black}{#1}}
\begin{document}

\date{\today\ (version 1.0)} 
\title{$L$-series and isogenies of abelian varieties}
\author[H.~Smit]{Harry Smit}
\address{(HS) Mathematisch Instituut, Universiteit Utrecht, Postbus 80.010, 3508 TA Utrecht, Nederland}
\email{h.j.smit@uu.nl}

\subjclass[2010]{11G40, 11R37, 11R42, 14K02}
\keywords{\normalfont Class field theory, $L$-series, abelian varieties, isogenies}

\begin{abstract}
Faltings's isogeny theorem states that two abelian varieties are isogenous over a number field precisely when the characteristic polynomials of the reductions at almost all prime ideals of the number field agree. This implies that two abelian varieties over $\Qz$ with the same $L$-series are necessarily isogenous, but this is false over a general number field. Let $A$ and $A'$ be two abelian varieties, defined over number fields $K$ and $K'$ respectively. Our main result is that $A$ and $A'$ are isogenous after a suitable isomorphism between $K$ and $K'$ if and only if the Dirichlet character groups of $K$ and $K'$ are isomorphic and the $L$-series of $A$ and $A'$ twisted by the Dirichlet characters match.
\end{abstract}

\maketitle
\section{Introduction}
When two abelian varieties $A$ and $A'$ are isogenous over a number field $K$, then for any prime ideal (``prime'' from now on) $\mfp$ of $K$, the reductions of $A$ and $A'$ at $\mfp$ have the same characteristic polynomial. Faltings's isogeny theorem (\cite[Kor.\ 2]{faltings}) guarantees the converse: if the characteristic polynomials of the reductions are equal at every prime, then $A$ and $A'$ are isogenous.

In this paper we will mainly be concerned with the $L$-series $L(A/K, s)$ of an abelian variety $A/K$, an Euler product defined in terms of the characteristic polynomials. For an abelian variety $A$ defined over $\Qz$ one can read off the characteristic polynomial of the reduction at $p$ from $L(A/\Qz, s)$ by looking at the coefficients of $p^{-js}$ for $j \in \Nz$. Hence the theorem of Faltings implies that 
\[
L(A/\Qz, s) = L(A'/\Qz, s) \Longleftrightarrow A \text{ and } A' \text{ are isogenous over $\Qz$.}
\]
We make two observations about this reformulation of Faltings's result for abelian varieties over $\Qz$. First of all, the equivalence does not hold if one replaces $\Qz$ by an arbitrary number field $K$, even if $A$ is an elliptic curve: see Lemma~\ref{lemma:not determined} or \cite[Rmk.\ 3.4]{silverberg}. This phenomenon is very much akin to the fact that general number fields are not characterized by their zeta function (\cite[p.\ 671--672]{gassmann});  one can only extract information about the combined product of all Euler factors for primes lying above the same rational prime number. Secondly, in Faltings's theorem $A$ and $A'$ need to be defined over the same number field, whereas there is no natural reason to assume this when considering equalities of $L$-series. 

Our main result states that one can obtain an isogeny between abelian varieties, defined over possibly different number fields, when considering equalities of $L$-series twisted by Dirichlet characters (see Definition~\ref{definition:twisted_L_series}):

\begin{theoremA}\label{theoremA:abelian varieties are isogenous through L-series}
Let $K$ and $K'$ be number fields, and let $A$ and $A'$ be two abelian varieties defined over $K$ and $K'$ respectively. Let $l$ be a prime strictly larger than twice the dimension of $A$, and let $\Hkab[l]$ be the group of Dirichlet characters of the absolute Galois group $G_K$ of order $l$. Suppose there is a group isomorphism $\psi: \Hlab[l] \DistTo \Hlabx[l]$ such that
\[
L(A/K, \chi, s) = L(A'/K', \psi(\chi), s)
\]
for every $\chi \in \Hlab[l]$. Then there exists an isomorphism $\sigma: K \to K'$ such that $A^\sigma$ is isogenous to $A'$ over $K'$, and furthermore $\sigma$ induces $\psi$.
\end{theoremA}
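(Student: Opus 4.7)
The plan is to reduce the theorem to Faltings's isogeny theorem, but first one must produce a field isomorphism $\sigma: K \to K'$. The argument therefore naturally divides into two stages: (1) construct $\sigma$ from the twisted $L$-series data together with $\psi$, and (2) use $\sigma$ and the $L$-series identities to match characteristic polynomials of Frobenius almost everywhere, so that Faltings's theorem yields the isogeny between $A^\sigma$ and $A'$ over $K'$.

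For stage (1), I would expand each twisted $L$-series as an Euler product and compare Dirichlet coefficients at $p^{-fs}$ for every rational prime $p$ and every residue degree $f$. At primes $\mfp$ unramified in the cyclic degree-$l$ extension cut out by $\chi$, the local factor involves $\chi(\mfp) \in \mu_l$ together with $a_\mfp$ and $N\mfp$; varying $\chi$ over all of $\Hkab[l]$ and applying character orthogonality on $\Hkab[l]$ versus $\Hkabx[l]$ (identified by $\psi$) produces, for each $p$ and $f$, a multiset identity between the local Euler factors attached to primes of $K$ of norm $p^f$ sorted by their Artin symbol and the analogous data for $K'$. The hypothesis $l > 2\dim A$ is used here to guarantee that the quadratic local factor $1 - a_\mfp X + p^f X^2$ cannot coincide accidentally with the factor coming from a prime of different norm, so that norms and Artin symbols of corresponding primes match exactly. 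The outcome is a $\psi$-compatible bijection on almost all primes that preserves residue degree and Frobenius conjugacy class.

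For stage (2), this prime correspondence, tagged with its Galois-theoretic data via $\Hkab[l]$, is then upgraded to a field isomorphism $\sigma: K \to K'$ by a Neukirch--Uchida-style rigidity theorem for abelian characters (the relevant instance, for finite-order Dirichlet characters, is the kind of result developed in the Cornelissen--de Smit--Li--Marcolli--Smit program and presumably established or cited earlier in the paper). By construction $\sigma$ induces $\psi$. With $\sigma$ in hand, $L(A^\sigma / K', s)$ is obtained from $L(A/K, s)$ by transporting primes along $\sigma$, and the hypothesis then forces equality of local factors $1 - a_{\sigma^{-1}(\mfq)} T + N\mfq \cdot T^2 = 1 - a'_\mfq T + N\mfq \cdot T^2$ at almost all primes $\mfq$ of $K'$. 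Faltings's theorem applied to $A^\sigma$ and $A'$ over $K'$ then delivers the desired isogeny.

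The hard part will be stage (1). An $L$-series only records products of Euler factors over primes of the same norm, so individual Euler factors are not directly accessible; this is precisely the obstruction that makes Faltings's theorem fail over general $K$ when untwisted $L$-series are used. The role of the Dirichlet twist is to break the symmetry among primes of the same norm via their $\chi$-values, and the role of the full group isomorphism $\psi$ on $\Hkab[l]$ (rather than just character-by-character equality) is to make this separation canonical enough to remember Galois-theoretic position. Executing this separation cleanly — while controlling primes where $\chi(\mfp) = 0$, primes of bad reduction, and degenerate coincidences among quadratic factors (ruled out by the size of $l$) — and then verifying that the resulting prime bijection genuinely lifts to a field isomorphism inducing $\psi$, rather than a mere combinatorial matching, is the technical core of the theorem.
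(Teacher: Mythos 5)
Your high-level architecture coincides with the paper's: build a $\psi$-compatible, norm-preserving bijection of primes on a density-one set, invoke the rigidity theorem of the author's earlier paper (\cite[Thm.\ 3.1]{GDTHJS}) to lift it to an isomorphism $\sigma\colon K\to K'$ inducing $\psi$, and finish with Faltings. Stage (2) as you describe it is essentially what the paper does. The genuine gap is in stage (1), which you correctly flag as the hard part but do not actually carry out, and the mechanism you sketch has problems. First, ``character orthogonality on $\Hkab[l]$'' is not available in any direct form: $\Hkab[l]$ is an infinite group, and even restricting to characters unramified over a fixed $p$, the coefficient of $T^f$ in $L_p(A/K,\chi,T)$ is not a clean sum over primes of norm $p^f$; it mixes terms $\chi(\mfp)^{f/f_\mfp}\big(a_\mfp\big)_{f/f_\mfp}$ from \emph{all} primes $\mfp\mid p$ with $f_\mfp\mid f$ (so $\chi(\mfp)$ enters raised to varying powers $f/f_\mfp$), together with cross terms from products of lower-order coefficients. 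The paper resolves this by an induction on $f$, using Grunwald--Wang to manufacture characters with prescribed values (a character $\chi^{+}$ maximizing the real part of a weighted sum $c(\chi)$, and characters $\chi_\mfp$ equal to a primitive root $Z$ at one prime and $1$ at the others), plus an elementary geometric observation that $(1-Z)a=(1-w)a'$ with $a,a'$ real and nonzero forces $w=Z$ and $a=a'$. None of this separation argument, nor the consistency check that the bijections obtained for different $f$ agree, is present in your sketch, and a ``multiset identity'' alone would not canonically match individual primes across $\psi$.

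Second, your stated use of the hypothesis $l>2\dim A$ is not the correct one. The local factors of an abelian variety of dimension $d$ have degree $2d$ (not $2$), and the hypothesis is not about preventing accidental coincidences of Euler factors of different norms. It is a coprimality condition: since $f/f_\mfp\le 2d<l$ and $\gcd(l,2d)=1$, raising $l$-th roots of unity to the powers $f/f_\mfp$ and $2d$ is injective, which is what allows one to recover $\chi(\mfp)$ from $\chi(\mfp)^{f/f_\mfp}$ in the induction and to deduce $\chi(\mfp)=\psi(\chi)(\phi(\mfp))$ from the equality of the $T^{2d f_\mfp}$-coefficients $\chi(\mfp)^{2d}p^{df_\mfp}=\psi(\chi)(\phi(\mfp))^{2d}p^{df_{\phi(\mfp)}}$. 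It also guarantees (via Grunwald--Wang) that the required prescribed values $\zeta^{\pm1}$ and $Z$ are attainable as $(f/f_\mfp)$-th powers of values of order-$l$ characters. Without identifying this role of $l$, the separation of primes of equal norm --- the entire point of twisting --- cannot be completed.
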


Our second result is that in the case of elliptic curves one can also take $l = 2$.

\begin{theoremA} \label{theoremA: elliptic curves are isogenous through L-series}
Let $K$ and $K'$ be number fields, and let $E$ and $E'$ be elliptic curves defined over $K$ and $K'$ respectively. Suppose there exists a group isomorphism $\psi: \Hlab[2] \DistTo \Hlabx[2]$ such that
\[
L(E/K, \chi, s) = L(E'/K', \psi(\chi), s)
\]
for any $\chi \in \Hlab[2]$. \new{Then there exists an isomorphism $\sigma: K \DistTo K'$ such that $E^\sigma$ is isogenous to $E'$ over $K'$.}
\end{theoremA}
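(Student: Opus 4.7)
The plan is to follow the strategy of Theorem~\ref{theoremA:abelian varieties are isogenous through L-series}, exploiting the special features of elliptic curves (degree-two Euler factors) to allow $l=2$.

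Specializing the hypothesis at $\chi=\mathbf{1}$ gives $L(E/K,s)=L(E'/K',s)$. Combining with the Artin formalism $L(E/L,s)=L(E/K,s)\,L(E/K,\chi,s)$ for the quadratic extension $L/K$ cut out by a nontrivial $\chi$, I would deduce $L(E/L,s)=L(E'/L',s)$ for every quadratic $L/K$, where $L'/K'$ is the quadratic extension cut out by $\psi(\chi)$. Extracting coefficients of $p^{-js}$ for each rational prime $p$ and each $\chi\in\Hkab[2]$ then yields the multiset equality
\[
\menge{(N\mfp,\chi(\mfp)a_{\mfp})}{\mfp\mid p}=\menge{(N\mfq,\psi(\chi)(\mfq)a'_{\mfq})}{\mfq\mid p}.
\]

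The crucial observation for elliptic curves is that the Euler factor has degree two: writing it as $(1-\alpha_{\mfp}T)(1-\beta_{\mfp}T)$, the two multisets (one for $\chi=\mathbf{1}$, one for a nontrivial $\chi$) together determine the unordered pair $\gekl{\alpha_{\mfp},\beta_{\mfp}}$. This is precisely the data that in higher dimensions would require $l>2\dim A$ roots of unity to reconstruct. I would then run the character-sum inclusion--exclusion of Theorem~A to build a bijection $\phi$ between the primes of $K$ and $K'$ of good reduction preserving the norm, the Frobenius class modulo $2$ compatibly with $\psi$, and the full local Euler factors; and proceed to the class-field-theoretic reconstruction of the field isomorphism $\sigma\colon K\DistTo K'$ inducing $\psi$, using the full Euler-factor information as the substitute for higher-order root-of-unity rigidity.

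The main obstacle will be precisely this reconstruction step at $l=2$: the construction of $\sigma$ in Theorem~A relies on the richness of $\Hkab[l]$ for $l>2\dim A$ to disentangle primes with identical norms and Frobenius classes modulo $l$, and at $l=2$ one must use the Hasse-bounded integers $a_{\mfp}$ as the extra invariant separating such primes. The new input should be that primes of $K$ above $p$ with the same norm and the same splitting behaviour in every quadratic extension of $K$ are forced, via the Euler factor matching, to share the same value of $a_{\mfp}$, which pins down the correspondence $\phi$ uniquely enough to descend through Artin reciprocity to a bona fide field isomorphism. Once $\sigma$ is in hand, the compatibility $a_{\mfp}^{\sigma}=a'_{\phi(\mfp)}$ at every prime of good reduction combined with Faltings's isogeny theorem applied over $K'$ yields that $E^{\sigma}$ is isogenous to $E'$.
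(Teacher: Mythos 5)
Your overall skeleton (prime correspondence $\to$ field isomorphism via \cite{GDTHJS} $\to$ Faltings) matches the paper, but two of your key steps have genuine gaps. First, the multiset equality $\{(N\mfp,\chi(\mfp)a_\mfp)\}_{\mfp\mid p}=\{(N\mfq,\psi(\chi)(\mfq)a'_\mfq)\}_{\mfq\mid p}$ does not follow from equality of the products $L_p$: a product of quadratic Euler factors attached to degree-one primes can coincide with a single quartic factor attached to a degree-two prime, e.g.\ $(1+aT+pT^2)(1-aT+pT^2)=1+(2p-a^2)T^2+p^2T^4$, so even the norms of the primes over $p$ are not a priori recoverable from $L_p$, let alone a single bijection $\phi$ coherent across all $\chi$. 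The paper gets around this with an induction on inertia degrees using quadratic characters that are \emph{ramified} at exactly one prime over $p$ (these exist by Grunwald--Wang because every local field has a ramified quadratic extension); the degree of $L_p(E/K,\chi,T)$ then detects ramification and pins down a norm-preserving bijection before any coefficients are compared. A maximization of $\chi\mapsto L_p(E/K,\chi,1)$ (legitimate because the Hasse bound makes every local factor positive at $T=1$) then fixes signs and yields $a_\mfp=a'_{\phi(\mfp)}$ and $\chi(\mfp)=\psi(\chi)(\phi(\mfp))$ --- but only at primes with $a_\mfp\neq0$.

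Second, and more seriously, your proposed fix for the $l=2$ obstruction is not the right one. The difficulty is not distinguishing primes with identical splitting behaviour; it is that when $a_\mfp=0$ the local factor is independent of $\chi$, so the twisted $L$-series carry no information about $\chi(\mfp)$ and the compatibility $\chi(\mfp)=\psi(\chi)(\phi(\mfp))$ is simply unavailable at supersingular primes. To invoke the reconstruction theorems of \cite{GDTHJS} one must show that $\{\mfp: a_\mfp\neq0\}$ is large: the paper uses Serre's open image theorem in the non-CM case and Deuring's criterion when $E$ has CM by a field contained in $K$ (supersingular primes then have inertia degree at least $2$, hence density zero). When $\mathrm{End}_{\overline K}(E)\not\subseteq K$ the set has density only $1/2$; a different reconstruction statement (\cite[Thm.\ B]{GDTHJS}) must be applied on the split primes of $KL/K$, the supersingular Euler factors must be matched by hand (both sides equal $1+pT^2$), and $\psi$ need \emph{not} be induced by $\sigma$ (see Remark~\ref{remark:extra_iso}), so your claim that the Euler-factor data pins the correspondence down ``uniquely enough'' to descend to $\sigma$ is false in that case. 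None of these inputs appear in your outline, and without them the passage from the $L$-series data to the field isomorphism does not go through.
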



The proofs of Theorems~\ref{theoremA:abelian varieties are isogenous through L-series} and~\ref{theoremA: elliptic curves are isogenous through L-series} consist of three parts. First, we establish a prime map between large subsets of the primes of $K$ and $K'$ that connects the values of $\chi$ and $\psi(\chi)$ and show that it is a bijection on a density one subset of the primes of $K$. This prime map is created through the use of characters with special properties, which exist due to the Grunwald-Wang theorem (see \cite[Ch.\ X, Thm.\ 5]{artintate}). For Theorem~\ref{theoremA: elliptic curves are isogenous through L-series}, the existence of this map relies on Serre's open image theorem for elliptic curves without complex multiplication (see \cite{serreopen}) and Deuring's criterion \cite{deuring} for CM elliptic curves. It follows from \cite[Thm.\ A \& B]{GDTHJS} that there exists a $\sigma: K \DistTo K'$, and finally we use Faltings's isogeny theorem  to conclude that the abelian varieties are isogenous after applying $\sigma$. 

\new{Even though Theorem~\ref{theoremA: elliptic curves are isogenous through L-series} holds for any elliptic curve, there still is a distinction to be made between those for which $\text{End}_{\overline{K}}(E) \subseteq K$ and those for which $\text{End}_{\overline{K}}(E) \nsubseteq K$. In the first case, all $\psi$ for which the conditions of Theorem~\ref{theoremA: elliptic curves are isogenous through L-series} hold are those induced by isomorphisms between $K$ and $K'$. In the second case additional isomorphisms $\psi$ may exist (see Remark~\ref{remark:extra_iso}).}

The article is structured as follows. In Section~\ref{section:preliminaries} we set up notation and introduce necessary definitions. We prove a large part of Theorem~\ref{theoremA: elliptic curves are isogenous through L-series} in Section~\ref{section:proof non-CM elliptic curves} before proving Theorem~\ref{theoremA:abelian varieties are isogenous through L-series} (partially) in Section~\ref{section:abelian varieties}, because many proofs of lemmas used for Theorem~\ref{theoremA:abelian varieties are isogenous through L-series} have a simpler version in the setting of Theorem~\ref{theoremA: elliptic curves are isogenous through L-series}. Then, in Section~\ref{section:prime bijections} we show that the results gathered in Sections~\ref{section:proof non-CM elliptic curves} and~\ref{section:abelian varieties} suffice to conclude Theorems~\ref{theoremA:abelian varieties are isogenous through L-series} and~\ref{theoremA: elliptic curves are isogenous through L-series}.

We end the introduction with some remarks and questions. A situation similar to Theorem~\ref{theoremA:abelian varieties are isogenous through L-series} occurs in the case of $L$-series of number fields. Here Ga{\ss}mann has shown (\cite[p.\ 671--672]{gassmann}) that two number fields with the same zeta function are not necessarily isomorphic. Theorem~\ref{theoremA:abelian varieties are isogenous through L-series} is an analogue of \cite[Thm.\ 3.1]{CdSLMS}: it shows that the existence of an $L$-series preserving isomorphism between two character groups necessitates that the underlying global fields are isomorphic. One wonders, taking into consideration \cite[Thm.\ 10.1]{CdSLMS}, whether or not two non-isogenous abelian varieties can always be distinguished by finitely many twisted $L$-series, or even a single one.

Lastly, in \cite{CLMS} a dynamical system is constructed that distinguishes non-isomorphic number fields, based on the abelianized Galois group. It would be interesting to generalize this to the case of abelian varieties.

\section{Preliminaries}\label{section:preliminaries}

We fix an algebraic closure $\overline{\Qz}$ of $\Qz$ throughout the entire paper. We denote number fields by $k$, $K$, $K'$ and $L$. We use $\mfp$ for the prime ideals (henceforth called ``primes'') of $K$ and $\mfq$ for the primes of $K'$.

The set of primes of a number field $K$ is denoted $\cP_{K}$, and the set of primes lying over a rational prime $p$ is denoted $\cP_{K, p}$. Given a prime $\mfp \in \cP_K$, we denote the norm of $\mfp$ by $N\mfp := \#\big(\mathcal{O}_K/\mfp\big) = p^{f_{\mfp}}$, where $f_{\mfp}$ is the inertia degree of $\mfp$. We denote the local field at a prime $\mfp$ of $K$ by $K_{\mfp}$, and use $\Fz_{\mfp}$ for the residue field.

For a number field $K$, let $G_K$ be the absolute Galois group, $K^\textup{ab}$ be the composite of all abelian extensions of $K$, and denote by $\Gkab$ its Galois group over $K$. The dual, denoted $\Hlab$, is the group of all Dirichlet characters (i.e. continuous homomorphisms) $G_K \to \Cz^\times$. For any prime $l$ we denote by $\Hlab[l]$ the subgroup of $\Hlab$ generated by characters of order $l$. Denote by $1_K \in \Hlab$ the trivial character.

We use the following convenient notation from combinatorics: if $P(T)$ is a polynomial, denote by $[T^n]P(T)$ the coefficient of $T^n$ of $P(T)$.

\subsection{Dirichlet characters} \leavevmode \\
Associated to every Dirichlet character $\chi \in \Hkab$ is a unique finite cyclic extension $K_{\chi}/K$ of degree equal to the order of $\chi$ such that $\chi$ factors through $\Gal(K_{\chi}/K)$. Let $\mfp$ be a prime of $K$ unramified in $K_{\chi}/K$ and let $\Frob{\mfp}$ be the Frobenius element in $\Gal(K_{\chi}/K)$. We set $\chi(\mfp) := \chi(\Frob{\mfp})$. If $\mfp$ is a prime of $K$ that ramifies in $K_{\chi}/K$, we set $\chi(\mfp) = 0$.

Throughout the paper we will be concerned with the existence of characters with certain properties (mainly with prescribed values at specific primes). For our purposes, the question of whether such characters exist is answered by the Grunwald-Wang theorem \cite[Ch.\ X, Thm.\ 5]{artintate}. It states the following: let $\mfp_1, \dots, \mfp_n$ be primes of $K$, and let $K_{\mfp_i}$ be the localization of $K$ at $\mfp_i$. For any $1 \leq i \leq n$, let $m_i$ be an integer and let $\chi_i \in \widecheck{G}_{K_{\mfp_i}}[m_i]$. Then, aside from a special case that occurs only when $m := \text{lcm}(m_1, \dots, m_n)$ is divisible by $4$, there exists a character $\chi \in \Hkab[m]$ such that $\chi(\mfp_i) = \chi_i(\mfp_i)$. 

The following two lemmas are useful corollaries of the Grunwald-Wang theorem.

\begin{lemma}\label{lemma:grunwald-wang in any order}
Let $K$ be a number field and $S = \{\mfp_1, \dots, \mfp_s\}$ a finite set of primes of $K$. Let $l$ be a prime number, and let $\zeta_1, \zeta_2, \dots, \zeta_s$ be $l^\text{th}$ roots of unity. Then there exists a character $\chi \in \Hlab[l]$ such that $\chi(\mfp_i) = \zeta_i$ for all $1 \leq i \leq s$. 
\end{lemma}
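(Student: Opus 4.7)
The plan is to invoke the Grunwald--Wang theorem as stated in the excerpt, after constructing appropriate local characters. The key preliminary observation is that since $l$ is prime, any least common multiple of orders dividing $l$ is either $1$ or $l$, so in particular it is never divisible by $4$. Hence the exceptional case of Grunwald--Wang cannot occur, and we may apply the theorem without caveat.

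The first step is to produce, for each $1 \leq i \leq s$, a local character $\chi_i \in \widecheck{G}_{K_{\mfp_i}}[l]$ satisfying $\chi_i(\Frob{\mfp_i}) = \zeta_i$. If $\zeta_i = 1$ we take $\chi_i$ to be trivial. Otherwise, $\zeta_i$ has order exactly $l$; using that $K_{\mfp_i}$ admits an unramified cyclic extension of every degree, the absolute Galois group $G_{K_{\mfp_i}}$ has an unramified quotient surjecting onto $\Zz/l\Zz$, and I would pick $\chi_i$ to be the composition of this surjection with the character $\Zz/l\Zz \to \Cz^\times$ sending the class of Frobenius to $\zeta_i$.

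The second step is to set $m := \mathrm{lcm}(m_1, \dots, m_s)$, where $m_i$ is the order of $\chi_i$. By construction $m \in \{1, l\}$; in particular $4 \nmid m$. Grunwald--Wang then delivers a global Dirichlet character $\chi$ of order dividing $m$ whose local component at each $\mfp_i$ is $\chi_i$, so that $\chi(\mfp_i) = \chi_i(\Frob{\mfp_i}) = \zeta_i$ for all $i$. Since the order of $\chi$ divides the prime $l$, it is either $1$ or $l$, and in both cases $\chi$ lies in the subgroup $\Hlab[l]$ generated by characters of order $l$ (which contains the trivial character as its identity element).

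The only potential obstacle is the Grunwald--Wang special case, but as noted above it is trivially excluded by the primality of $l$; the remainder is a bookkeeping check on orders.
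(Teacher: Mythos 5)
Your proof is correct. The paper itself only cites \cite[Lemma 2.1]{GDTHJS} for this statement, but your argument is the standard one and is exactly the technique the paper uses for the quadratic analogue (Lemma~\ref{lemma:grunwald-wang in order 2}): realize each prescribed value by a character of the unramified degree-$l$ extension of $K_{\mfp_i}$ sending Frobenius to $\zeta_i$, note that $m = \mathrm{lcm}(m_1,\dots,m_s) \in \{1, l\}$ so the Grunwald--Wang exceptional case (which requires $4 \mid m$) cannot occur, and glue. All the bookkeeping (the trivial character lying in $\Hlab[l]$, and $\chi(\mfp_i) = \chi(\Frob{\mfp_i}) = \zeta_i$ since the local components are unramified) checks out.
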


\begin{proof}
\cite[Lemma 2.1]{GDTHJS}.
\end{proof}

\begin{lemma}\label{lemma:grunwald-wang in order 2}
Let $K$ be a number field and $S = \{\mfp_1, \dots, \mfp_s\}$ a finite set of primes of $K$. Let $\zeta_1, \dots, \zeta_s$ be elements of $\{-1, 0, 1\}$. Then there exists a quadratic character $\chi$ such that $\chi(\mfp_i) = c_i$. 
\end{lemma}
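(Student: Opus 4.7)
The plan is to reduce the lemma to the Grunwald--Wang theorem recalled just before Lemma~\ref{lemma:grunwald-wang in any order}, taking $m = 2$. Crucially, since $4 \nmid 2$, the exceptional case of Grunwald--Wang never appears, so we are free to prescribe arbitrary local quadratic characters at finitely many primes; the only task is to exhibit, for each $\mfp_i$, a local character $\chi_i \in \widecheck{G}_{K_{\mfp_i}}[2]$ whose value at the Frobenius (with the convention that this value is $0$ when $\chi_i$ is ramified) equals the target $\zeta_i$.

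For each $i$ I would split on the value of $\zeta_i$. If $\zeta_i = 1$, take $\chi_i$ to be the trivial character of $G_{K_{\mfp_i}}$. If $\zeta_i = -1$, take $\chi_i$ to be the unique unramified quadratic character of $K_{\mfp_i}$, that is, the one cutting out the unramified quadratic extension $K_{\mfp_i}^{(2)}/K_{\mfp_i}$; by construction $\chi_i(\Frob{\mfp_i}) = -1$. If $\zeta_i = 0$, take $\chi_i$ to be any ramified quadratic character of $K_{\mfp_i}$, so that $\chi_i(\mfp_i) = 0$ by the convention fixed in Section~\ref{section:preliminaries}.

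The one point that is not completely formal is the existence in the case $\zeta_i = 0$: one needs a ramified quadratic extension of the non-archimedean local field $K_{\mfp_i}$. When the residue characteristic is odd, the extension $K_{\mfp_i}(\sqrt{\pi})$ for any uniformizer $\pi$ is totally ramified and quadratic; when the residue characteristic is $2$, totally ramified quadratic extensions still exist by standard local class field theory (e.g.\ Kummer-type extensions by a suitable unit times a uniformizer). In either case, the corresponding quadratic character is ramified at $\mfp_i$, which is all we need.

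With the local characters $\chi_1,\dots,\chi_s$ in hand, I would invoke Grunwald--Wang with $m_1 = \dots = m_s = 2$, whose $\text{lcm}$ is $2$ and hence avoids the exceptional case, to obtain a global character $\chi \in \Hlab[2]$ such that $\chi|_{G_{K_{\mfp_i}}} = \chi_i$ for every $i$. By construction $\chi(\mfp_i) = \chi_i(\mfp_i) = \zeta_i$ for every $i$. The expected main (and only) subtlety is the existence of ramified local quadratic characters in residue characteristic $2$, but this is standard, so overall the proof is a one-line appeal to Grunwald--Wang once the local conditions have been set up.
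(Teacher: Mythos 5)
Your proof is correct and follows essentially the same route as the paper: prescribe local quadratic characters at each $\mfp_i$ (ramified when $\zeta_i = 0$, unramified with the prescribed Frobenius value when $\zeta_i = \pm 1$) and glue them with Grunwald--Wang, noting that $m = 2$ avoids the exceptional case. The paper handles the ramified local extension uniformly by adjoining the square root of a uniformizer (which works in residue characteristic $2$ as well, since $x^2 - \pi$ is Eisenstein), so your extra caution there is unnecessary but harmless.
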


\begin{proof}
Any of the local fields $K_{\mfp_i}$ has a quadratic ramified Galois extension, which is obtained by adjoining the square root of a uniformizer. If $\zeta_i = 0$ we choose $\chi_i$ to be the character associated to such an extension.

Every $K_{\mfp_i}$ also has an unramified Galois extension of degree $2$, obtained by adjoining a $(N\mfp_i^2 - 1)^\text{th}$ root of unity. If $\zeta_i = \pm 1$, let $\chi_i$ be a character that factorises through the Galois group of such an extension of $K_{\mfp_i}$, for which $\chi_i(\mfp_i) = \zeta_i$. The Grunwald-Wang theorem now guarantees that there is a $\chi \in \Hlab[2]$ such that $\chi(\mfp_i) = \chi_i(\mfp_i) = \zeta_i$. 
\end{proof}

\subsection{$L$-series of abelian varieties}\leavevmode \\
\noindent Let $A$ be an abelian variety of dimension $d$ defined over $K$, and let $l$ be a prime number. Let $T_lA = \varprojlim A[l^n]$ be the Tate module, and define $V_lA = T_lA \otimes_{\Zz_l} \Qz_l$, a $2d$-dimensional vector space over $\Qz_l$. 

Let $\mfp$ be any prime of $K$ coprime to $l$. Denote by $\rho_{A,l}: \Gal(\overline{K_{\mfp}}/K_{\mfp}) \to \text{Hom}(V_lA, \Qz_l)$ the $l$-adic representation of $\Gal(\overline{K_{\mfp}}/K_{\mfp})$, and fix an embedding $\overline{\Qz_l} \hookrightarrow \Cz$ to make the representation complex. Let $\Frob{\mfp} \in \Gal(\overline{K_{\mfp}}/K_{\mfp})$ be any lift of the geometric (i.e. inverse of the) Frobenius element in $\Gal(K^{\textup{unr}}_{\mfp}/K_{\mfp})$.

\begin{definition}\label{definition:twisted_L_series}
Let $A/K$ be an abelian variety, $\mfp$ a prime of $K$ and $l$ a prime unequal to the characteristic of $\mfp$. Denote by $I_{\mfp} \subset G_K$ the inertia group of $\mfp$ and let $\chi \in \Hlab$ be a Dirichlet character. We define the local $L$-series of $A/K$ at $\mfp$ twisted by $\chi$ by
\[
L_\mfp(A/K, \chi, T) = \det\big(1 - T^{f_{\mfp}} (\chi \otimes \rho_{A, l})(\Frob{\mfp}) \mid \big(\Cz \otimes \text{Hom}(V_lA, \Qz_l)\big)^{I_{\mfp}}\big).
\]
This definition is slightly non-standard; usually $T^{f_{\mfp}}$ is replaced by $T$. However, for our purposes it is more convenient to define the local $L$-series like this, see Definition~\ref{definition:factor at p}.

If $A$ has good reduction at $\mfp$, then we can write this polynomial as
\[
L_\mfp(A/K, \chi, T) = 1 + \chi(\mfp)\ap_1 T^{f_{\mfp}} + \chi(\mfp)^2\ap_2 T^{2f_{\mfp}} + \dots + \chi(\mfp)^{2d}\ap_{2d} T^{2df_{\mfp}}.
\]
We call the $\ap_i$ the \emph{local coefficients}. The degree of this polynomial equals $2d f_{\mfp}$, and all the roots have modulus $p^{-f_{\mfp}/2}$, hence we have $\ap_{2d} = p^{d f_{\mfp}}$, see \cite{weil}.

The global twisted $L$-series, denoted $L(A/K, \chi, s)$, is defined as
\[
L(A/K, \chi, s) = \prod_{p \in \cP_{\Qz}} \prod_{\mfp \in \cP_{K, p}} L_{\mfp}(A/K, \chi, p^{-s})^{-1}.
\]
\end{definition}

$L$-series can be written additively:
\[
L(A/K, \chi, s) = \sum_{n \in \mathbb{N}} c_n n^{-s}
\]

The $(c_n)_{n \in \Nz}$ uniquely determine $L(A/K, \chi, s)$ and vice versa (see \cite[Ch.\ 2, \S 2.2, Cor.\ 4]{serrecourse}). The equality of coefficients allows us to extract information about the product of the local factors, as made precise by the following definition and lemma:

\begin{definition}\label{definition:factor at p}
We define the factor at $p$, denoted $L_p(A/K, \chi, T)$ to be the product
\[
L_p(A/K, \chi, T) := \prod_{\mfp \mid p} L_{\mfp}(A/K, \chi, T).
\]
\end{definition}

\begin{lemma}\label{lemma:K=K' iff Lp=Lp'}
We have an equality of $L$-series $L(A/K, \chi, s) = L(A'/ K', \chi', s)$ if and only if $L_p(A/K, \chi, s) = L_p(A'/ K', \chi', s)$ for every rational prime $p$.
\end{lemma}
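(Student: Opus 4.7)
The ``if'' direction is immediate from Definition~\ref{definition:twisted_L_series}: if the factors at $p$ coincide for every rational prime $p$, then taking the product over $p$ gives the same global $L$-series.

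For the ``only if'' direction, the plan is to use the uniqueness of the additive expansion $L(A/K, \chi, s) = \sum_{n \in \Nz} c_n n^{-s}$ (cited from \cite[Ch.\ 2, \S 2.2, Cor.\ 4]{serrecourse}) together with the observation that the coefficient $c_n$ is sensitive only to the local factors at primes dividing $n$. More precisely, for each rational prime $q$ write the formal power series expansion
\[
L_q(A/K, \chi, T)^{-1} = \sum_{k \geq 0} a_k^{(q)} T^k, \qquad a_0^{(q)} = 1,
\]
so that $L_q(A/K, \chi, q^{-s})^{-1} = \sum_{k \geq 0} a_k^{(q)} q^{-ks}$. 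Multiplying these series over all primes $q$, a term contributing to $n^{-s}$ arises from a choice of exponents $k_q \geq 0$ with $n = \prod_q q^{k_q}$, and the resulting coefficient is $\prod_q a_{k_q}^{(q)}$. Specialising $n = p^k$ forces $k_q = 0$ for $q \neq p$, so $c_{p^k} = a_k^{(p)}$, and analogously $c'_{p^k} = a_k'^{(p)}$ for the primed side.

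Assuming $L(A/K, \chi, s) = L(A'/K', \chi', s)$, uniqueness of Dirichlet coefficients gives $c_n = c'_n$ for every $n$, and in particular $a_k^{(p)} = a_k'^{(p)}$ for all $k \geq 0$. This is equality of the two formal power series $L_p(A/K, \chi, T)^{-1}$ and $L_p(A'/K', \chi', T)^{-1}$, and inverting yields $L_p(A/K, \chi, T) = L_p(A'/K', \chi', T)$.

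There is no real obstacle here; the only minor care needed is to justify the formal manipulation of the infinite product, which is legitimate because for each fixed $n$ only finitely many factors contribute to the coefficient of $n^{-s}$, so the passage from the infinite Euler product to the Dirichlet series and the extraction of coefficients is purely algebraic.
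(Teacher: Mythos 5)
Your proof is correct and is essentially the standard argument that the paper delegates to its citation of \cite[Lemma 5.6]{GDTHJS}: the ``if'' direction is the definition of the Euler product, and the ``only if'' direction follows because the Dirichlet coefficient $c_{p^k}$ depends only on the factor at $p$, combined with uniqueness of Dirichlet coefficients. You have simply written out the details the paper leaves to the reference, so there is nothing to correct.
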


\begin{proof}
\cite[Lemma 5.6]{GDTHJS} proves this for Dirichlet $L$-series of number fields. The same proof holds for $L$-series of abelian varieties.
\end{proof}

If $A/K$ is an abelian variety, and $\sigma$ is an automorphism of $K$, then for any $\mfp \in \cP_{K}$ we have
\[
L_{\mfp}(A/K, T) = L_{\sigma(\mfp)}(A^\sigma/K, T).
\]
As $\sigma$ permutes the primes lying over any rational prime $p$, it follows that
\[
L_{p}(A/K, T) = L_{p}(A^\sigma/K, T)
\]
for any prime number $p$. As a result, $L(A/K, s) = L(A^\sigma/K, s)$. However, $A$ and $A^\sigma$ need not be isogenous: take for example the elliptic curve over $\Qz(i)$ defined by $E:y^2 = x^3 + ix$, and let $\mfp_1 = (2 + i)$ and $\mfp_2 = (2-i)$. Let $\sigma$ be the conjugation automorphism, and note $\sigma (\mfp_1) = \mfp_2$. As explained above, $L(E/K, s) = L(E^\sigma/K, s)$, but $L_{\mfp_2}(E/K, s) \neq L_{\mfp_1}(E/K, s) = L_{\mfp_2}(E^\sigma/K, s)$ as the reduction of $E$ to $\Fz_{\mfp_1}$ has ten points,  but the reduction of $E$ to $\Fz_{\mfp_2}$ only has two. It follows that $E$ and $E^\sigma$ are not isogenous over $\Qz(i)$, as they have different characteristic polynomials at $\mfp_2$. This proves the following lemma.

\begin{lemma}\label{lemma:not determined}
The isogeny type of an abelian variety over an arbitrary number field is not characterized by its $L$-series. \qed
\end{lemma}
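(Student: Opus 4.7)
The plan is to exhibit an explicit counterexample, using the observation (made in the paragraph preceding the statement) that an automorphism $\sigma$ of a number field $K$ produces an $L$-equivalent but potentially non-isogenous twist $A^\sigma/K$. The point is that $\sigma$ carries the local factor of $A$ at $\mfp$ to the local factor of $A^\sigma$ at $\sigma(\mfp)$, so $\sigma$ merely permutes the Euler factors lying above any given rational prime $p$. Summing over primes above $p$ therefore yields $L_p(A/K, T) = L_p(A^\sigma/K, T)$, and by Lemma~\ref{lemma:K=K' iff Lp=Lp'} the global $L$-series of $A$ and $A^\sigma$ coincide. By Faltings, non-isogeny between $A$ and $A^\sigma$ over $K$ is equivalent to the existence of some prime $\mfp$ at which the local $L$-factors disagree, i.e.\ some prime at which $\sigma$ genuinely alters the reduction data.

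For the concrete example I would take $K = \Qz(i)$, $\sigma$ the nontrivial automorphism (complex conjugation), and $E: y^2 = x^3 + ix$ over $K$, an elliptic curve whose defining equation is manifestly not $\sigma$-invariant. The rational prime $5$ splits in $K$ as $(2+i)(2-i)$, and $\sigma$ exchanges $\mfp_1 = (2+i)$ and $\mfp_2 = (2-i)$. The candidate verification is that $\#E(\Fz_{\mfp_1}) \neq \#E(\Fz_{\mfp_2})$, so that the traces of Frobenius at these two primes are distinct; once this is checked, $L_{\mfp_2}(E/K, T) \neq L_{\mfp_2}(E^\sigma/K, T) = L_{\mfp_1}(E/K, T)$, and hence by Faltings $E$ and $E^\sigma$ are not isogenous over $K$, while still $L(E/K, s) = L(E^\sigma/K, s)$.

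The only non-formal step is the point count: I would compute $\#E(\Fz_{\mfp_1})$ and $\#E(\Fz_{\mfp_2})$ directly by running through representatives of $\Fz_5$ and testing solvability of $y^2 = x^3 + ix$ with $i$ interpreted as a square root of $-1$ in $\Fz_5$ (namely $i \equiv 2 \pmod{\mfp_1}$ and $i \equiv 3 \pmod{\mfp_2}$, or vice versa depending on conventions). This is the main (and only) obstacle, and the expected outcome is $10$ points on one reduction and $2$ on the other, which already certifies distinct Euler factors and hence non-isogeny.
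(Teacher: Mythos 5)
Your proposal is correct and is essentially identical to the paper's own argument, which uses exactly the same example ($K = \Qz(i)$, $E: y^2 = x^3 + ix$, $\sigma$ complex conjugation swapping $(2+i)$ and $(2-i)$, with point counts $10$ and $2$). The point count you defer to is easily verified: with $i \equiv 3 \pmod 5$ one gets $10$ points and with $i \equiv 2 \pmod 5$ one gets $2$, so the local factors at $\mfp_2$ indeed differ and Faltings gives non-isogeny.
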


\section{Most elliptic curves are characterized by their quadratic twists}\label{section:proof non-CM elliptic curves}

In this section we consider the $L$-series of elliptic curves twisted with quadratic characters. The main difference between quadratic twists and twists of higher order is the fact that any quadratic character $\chi \in \Hlab[2]$ unramified at $\mfp$ satisfies $\chi(\mfp)^2 = 1$. Hence the local $L$-series at $\mfp$ is of the form
\[
L_{\mfp}(E/K, \chi, T) = 1 + \chi(\mfp)a_{\mfp} T^{f_{\mfp}} + p^{f_{\mfp}} T^{2 f_{\mfp}}.
\]
(We use $a_{\mfp}$ as shorthand for $\ap_1$ in this section).
In particular, if $a_{\mfp} = 0$, this does not depend on $\chi$. This troubles the case where a significant number of the $a_{\mfp}$ equal zero, see Remark~\ref{remark:extra_iso}.

In this section we do not yet prove Theorem~\ref{theoremA: elliptic curves are isogenous through L-series} completely. Rather, we show that the conditions of the theorem imply the existence of an injective norm-preserving map of primes with certain properties.

 Let $S$ be the set of odd rational primes $p$ for which the following holds:
\begin{itemize}
\item $p$ is unramified in both $K$ and $K'$, and
\item $E$ and $E'$ have good reduction at all primes lying over $p$ of $K$ and $K'$ respectively.
\end{itemize}
Note that these conditions exclude finitely many primes (see \cite{serretateNOS}). Furthermore, define $\cS \subseteq \cP_{K}$ as be the set of primes of $K$ that lie over a prime in $S$ and define $\cS' \subseteq \cP_{K'}$ similarly.

\begin{theorem} \label{theorem: from L-functions of elliptic curves to prime bijection}
Let $K$ and $K'$ be number fields, and let $E$ and $E'$ be elliptic curves defined over $K$ and $K'$ respectively. Suppose there exists an isomorphism $\psi: \Hlab[2] \to \Hlabx[2]$ such that
\[
L(E/K, \chi, s) = L(E'/K', \psi(\chi), s)
\]
for any $\chi \in \Hlab[2]$. Then there exists a norm-preserving bijection of primes $\phi: \cS \to \cS'$ such that $a_{\mfp} = a'_{\phi(\mfp)}$ for any $\mfp \in \cP_K$ and 
\[
\chi(\mfp) = \psi(\chi)(\phi(\mfp))
\]
for any $\chi \in \Hlab[2]$ and $\mfp \in \cP_K$ such that $a_{\mfp} \neq 0$.
\end{theorem}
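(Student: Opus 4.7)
The plan is to reduce, via Lemma~\ref{lemma:K=K' iff Lp=Lp'}, to a comparison at each rational prime $p \in S$; that is, to work with the identity
\[
\prod_{\mfp \mid p} \bigl(1 + \chi(\mfp) a_\mfp T^{f_\mfp} + p^{f_\mfp} T^{2f_\mfp}\bigr) = \prod_{\mfq \mid p} \bigl(1 + \psi(\chi)(\mfq) a'_\mfq T^{f_\mfq} + p^{f_\mfq} T^{2f_\mfq}\bigr)
\]
as $\chi$ varies over $\Hlab[2]$. At inactive primes ($a_\mfp = 0$) the local factor does not depend on $\chi$; at active primes, $\chi(\mfp) \in \{\pm 1\}$ toggles the sign of the middle coefficient, while $\chi(\mfp) = 0$ (the ramified case) deletes it altogether.

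The principal tool is Lemma~\ref{lemma:grunwald-wang in order 2}, which lets me prescribe any pattern in $\{-1, 0, +1\}$ at finitely many primes. For each active prime $\mfp_0 \mid p$, I take $\chi_{\mfp_0}$ equal to $-1$ at $\mfp_0$ and $+1$ at every other prime of $K$ above $p$, and form
\[
R(T) = \frac{L_p(E/K,\chi_{\mfp_0},T)}{L_p(E/K,1,T)} = \frac{1 - a_{\mfp_0} T^{f_{\mfp_0}} + p^{f_{\mfp_0}} T^{2f_{\mfp_0}}}{1 + a_{\mfp_0} T^{f_{\mfp_0}} + p^{f_{\mfp_0}} T^{2f_{\mfp_0}}} = 1 - 2 a_{\mfp_0} T^{f_{\mfp_0}} + 2 a_{\mfp_0}^2 T^{2f_{\mfp_0}} + \cdots,
\]
whose lowest non-constant Taylor coefficient reads off the pair $(f_{\mfp_0}, a_{\mfp_0})$. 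By hypothesis, $R(T)$ equals the analogous ratio on the $K'$ side formed from $\psi(1_K) = 1_{K'}$ and $\psi(\chi_{\mfp_0})$. Analyzing this as an identity of rational functions via unique factorization in $\Qz[T]$---using that the quadratics $p^f u^2 + a u + 1$ are pairwise coprime across distinct pairs $(f,a)$ in the Hasse--Weil range, and that the ramified factor $1 + p^f u^2$ has roots disjoint from every such active quadratic---forces $\psi(\chi_{\mfp_0})$ to be unramified at every active prime of $K'$ above $p$, constrains $A_{\mfp_0} := \{\mfq \mid p : \psi(\chi_{\mfp_0})(\mfq) = -1,\ a'_\mfq \neq 0\}$ to consist of primes of inertia degree exactly $f_{\mfp_0}$, and yields the multiset identity
\[
\{-a_{\mfp_0}\} \cup \{a'_\mfq\}_{\mfq \in A_{\mfp_0}} = \{a_{\mfp_0}\} \cup \{-a'_\mfq\}_{\mfq \in A_{\mfp_0}}.
\]

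To promote this to a bijection, I run the construction for every active $\mfp_0$ and cross-reference the $A_{\mfp_0}$'s: because $\psi$ is a group isomorphism, products $\chi_{\mfp_0}\chi_{\mfp_1}$ map to $\psi(\chi_{\mfp_0})\psi(\chi_{\mfp_1})$, so the flip patterns on the two sides are mutually compatible; this rules out the ``paired'' solutions (entries $a'_\mfq = \pm v$) in $A_{\mfp_0}$ and forces $|A_{\mfp_0}| = 1$. Defining $\phi(\mfp_0)$ as the unique element of $A_{\mfp_0}$ yields an injection of active primes respecting $(f_\mfp, a_\mfp)$; swapping the roles of $K$ and $K'$ via $\psi^{-1}$ promotes this to a bijection. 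Inactive primes are matched through the trivial-character identity after cancelling all active factors, leaving $\prod_{a_\mfp = 0} (1 + p^{f_\mfp} T^{2f_\mfp}) = \prod_{a'_\mfq = 0} (1 + p^{f_\mfq} T^{2f_\mfq})$; since distinct $f$'s give coprime polynomials, the multiset of inactive inertia degrees agrees and any norm-preserving matching extends $\phi$. The compatibility $\chi(\mfp) = \psi(\chi)(\phi(\mfp))$ at each active $\mfp$ then follows from the uniqueness built into the construction. \emph{The main obstacle} is the multiset ambiguity at the single-flip stage---the multiset identity for a single $\mfp_0$ admits spurious solutions containing paired contributions $\pm v$ in $A_{\mfp_0}$---and eliminating these is precisely where the group structure of $\psi$, combined with simultaneous analysis of several single-flip characters, is essential.
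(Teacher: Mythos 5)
Your overall strategy (reduce to a comparison of the factors $L_p$ via Lemma~\ref{lemma:K=K' iff Lp=Lp'}, build single-flip characters with Lemma~\ref{lemma:grunwald-wang in order 2}, and compare via unique factorization of the resulting rational functions) is coherent and genuinely different from the paper's, but there is a real gap exactly at the point you yourself flag as the main obstacle. The single-flip analysis only shows that the flip set $A_{\mfp_0}$ consists of one prime carrying $(f_{\mfp_0},\pm a_{\mfp_0})$ together with arbitrarily many ``cancelling pairs'' with coefficients $\{v,-v\}$ --- and, contrary to what you assert, such pairs may occur at \emph{any} inertia degree, since the multiset identity restricted to a degree $f\neq f_{\mfp_0}$ only forces symmetry under negation, not emptiness. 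To kill the pairs you appeal to ``the group structure of $\psi$'' and ``simultaneous analysis of several single-flip characters,'' but you never exhibit the mechanism, and this step is not routine. One workable route: for each value $v$ the identity $\sum_{a_\mfp=v}\chi(\mfp)-\sum_{a_\mfp=-v}\chi(\mfp)=\sum_{a'_\mfq=v}\psi(\chi)(\mfq)-\sum_{a'_\mfq=-v}\psi(\chi)(\mfq)$ holds for every $\chi$ unramified above $p$; viewing both sides as finite sums of characters of that group of $\chi$'s, linear independence of characters plus a counting argument (each active $\mfp$ requires its own $\mfq$ with $\psi(\chi)(\mfq)=\chi(\mfp)$ and $a'_\mfq=a_\mfp$, and there are only as many active $\mfq$'s as active $\mfp$'s) forces the permutation. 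Without an argument of this kind the construction of $\phi$ is incomplete. A second, smaller gap: the theorem demands $\chi(\mfp)=\psi(\chi)(\phi(\mfp))$ for \emph{every} $\chi\in\Hlab[2]$ with $a_\mfp\neq 0$, including characters ramified at some primes above $p$; your closing sentence does not cover these, whereas the paper handles them by extracting the coefficient of $T^{f_\mfp}$ from the difference $L_p(E/K,\chi,T)-L_p(E/K,\chi\chi_{\mfp},T)$.

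It is worth seeing how the paper sidesteps your obstacle entirely: instead of sign flips it uses ramification. The character $\chiunram{\mfp}$, ramified at every prime above $p$ except $\mfp$, makes $L_p(E/K,\chiunram{\mfp},T)$ equal to the \emph{single} local factor at $\mfp$, so no products, multisets, or pairing ambiguities arise; the signs are then pinned down by the maximizing character of Definition~\ref{definition:chipos order 2}, and the norm-matching is done first by a degree induction with characters ramified at exactly one prime. Relatedly, your parenthetical that a ramified prime contributes a factor $1+p^fT^{2f}$ is incorrect: the inertia invariants vanish, so the ramified local factor is $1$ and the degree of $L_p$ drops by $2f_\mfp$. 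This slip is harmless for your degree count but should be corrected.
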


The remainder of this section is spent on the proof of this theorem. From now on, assume that the conditions of Theorem~\ref{theorem: from L-functions of elliptic curves to prime bijection} hold.

\begin{lemma}\label{lemma:equal degree}
We have $[K:\Qz] = [K': \Qz]$. 
\end{lemma}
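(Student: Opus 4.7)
The plan is to extract the degree of $K$ over $\Qz$ from the trivial-character $L$-series by looking at the degree (in $T$) of the local factor $L_p(E/K, 1_K, T)$ at a well-behaved prime $p$.

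First I would apply the hypothesis to $\chi = 1_K$. Since $\psi$ is a group isomorphism, it must send the identity of $\Hlab[2]$ to the identity of $\Hlabx[2]$, so $\psi(1_K) = 1_{K'}$. The hypothesis thus gives the equality of untwisted $L$-series
\[
L(E/K, s) = L(E'/K', s).
\]
By Lemma~\ref{lemma:K=K' iff Lp=Lp'}, this is equivalent to $L_p(E/K, 1_K, T) = L_p(E'/K', 1_{K'}, T)$ for every rational prime $p$.

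Next I would pick any $p \in S$ (the set $S$ is cofinite in $\cP_{\Qz}$, hence nonempty). By definition of $S$, the prime $p$ is unramified in both $K$ and $K'$ and both elliptic curves have good reduction at every prime above $p$. Using the good-reduction form of the local factor from Definition~\ref{definition:twisted_L_series}, together with Definition~\ref{definition:factor at p}, we can write
\[
L_p(E/K, 1_K, T) = \prod_{\mfp \mid p}\bigl(1 + a_{\mfp} T^{f_{\mfp}} + p^{f_{\mfp}} T^{2 f_{\mfp}}\bigr),
\]
which is a polynomial in $T$ with leading coefficient $\prod_{\mfp \mid p} p^{f_{\mfp}} = p^{\sum_{\mfp \mid p} f_{\mfp}} = p^{[K:\Qz]}$ (the last equality using that $p$ is unramified in $K$, so $\sum_{\mfp \mid p} f_{\mfp} = [K:\Qz]$). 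Since this leading coefficient is nonzero, the polynomial has degree exactly $2[K:\Qz]$. The analogous computation for $K'$ shows that $L_p(E'/K', 1_{K'}, T)$ has degree exactly $2[K':\Qz]$.

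Finally, comparing degrees on the two sides of the equality of $L_p$-factors yields $2[K:\Qz] = 2[K':\Qz]$, hence $[K:\Qz] = [K':\Qz]$. There is no real obstacle here; the only point worth noting is the need to verify that $\psi$ sends trivial to trivial (immediate from being a group isomorphism) and that the leading coefficient $p^{[K:\Qz]}$ is nonzero so that the degree is genuinely $2[K:\Qz]$.
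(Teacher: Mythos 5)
Your proof is correct and follows exactly the paper's argument: the paper likewise notes that for $p \in S$ the degree of $L_p(E/K, 1_K, T)$ is $2[K:\Qz]$, that the degree of $L_p(E'/K', 1_{K'}, T)$ is $2[K':\Qz]$, and that $\psi$ sends the trivial character to the trivial character. Your added care about the leading coefficient being $p^{[K:\Qz]} \neq 0$ is a nice touch but does not change the substance.
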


\begin{proof}
For any $p \in S$, the degree of $L_p(E/K, 1_K, T)$ as a polynomial in $T$ equals $2[K:\Qz]$. Similarly, the degree of $L_p(E'/K', 1_{K'}, T)$ equals $2[K':\Qz]$. The result follows as $\psi(1_K) = \psi(1_{K'})$.
\end{proof}

 The bijection of primes of Theorem~\ref{theorem: from L-functions of elliptic curves to prime bijection} is created one rational prime at a time; for the remainder of this section, fix a prime number $p \in S$. We create characters $\chi$ that have special behaviour on a single prime lying over $p$, and prove that $\psi(\chi)$ must have similar properties, due to $L(E/K, \chi, s) = L(E'/K', \psi(\chi), s)$.

\begin{definition}\label{definition:ramified character of order 2}
Let $\chiram{\mfp} \in \Hlab[2]$ be a character that satisfies the following for any $\tilde{\mfp} \in \cP_{K, p}$:
\[
\chiram{\mfp}(\tilde{\mfp}) = 
\begin{cases}
0 & \text{ if } \tilde{\mfp} = \mfp;\\
1 & \text{ otherwise.}
\end{cases}
\]
Such a character exists by Lemma~\ref{lemma:grunwald-wang in order 2}.
\end{definition}

\begin{lemma}\label{lemma:same norm, same ramification}
There exists a norm-preserving bijection $\phi: \cP_{K,p} \to \cP_{K',p}$ such that for any prime $\mfp \in \cP_{K, p}$ and any $\chi \in \Hlab[2]$ we have that $\chi$ is ramified at $\mfp$ if and only if $\psi(\chi)$ is ramified at $\phi(\mfp)$. 
\end{lemma}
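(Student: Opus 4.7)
The plan is to reduce the hypothesis to the local factor $L_p(E/K, \chi, T)$ via Lemma~\ref{lemma:K=K' iff Lp=Lp'} and then construct $\phi$ from an $\Fz_2$-linear isomorphism induced by $\psi$. Write $R_p(\chi) := \{\mfp \mid p : \chi \text{ is ramified at } \mfp\} \subseteq \cP_{K, p}$ and let $w(T) := \sum_{\mfp \in T} f_{\mfp}$ be the weight function on subsets. The key local observation is that for $\chi \in \Hlab[2]$ and $E$ of good reduction at $\mfp \mid p$, the local factor $L_\mfp(E/K, \chi, T)$ equals $1$ when $\chi$ is ramified at $\mfp$ (the inertia invariants of $\chi \otimes \rho_{E,l}$ vanish because $\chi|_{I_\mfp}$ is non-trivial while $\rho_{E,l}|_{I_\mfp}$ is trivial) and otherwise has degree $2 f_\mfp$. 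Hence
\[
\deg L_p(E/K, \chi, T) = 2[K:\Qz] - 2 w(R_p(\chi)),
\]
and comparing with the $K'$-side, together with Lemma~\ref{lemma:equal degree}, yields the identity $w(R_p(\chi)) = w(R_p(\psi(\chi)))$ for every $\chi \in \Hlab[2]$.

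The next step is to promote $R_p$ to a group homomorphism so that $\psi$ transfers cleanly. Because $p \in S$ is odd, the tame inertia quotient $I_\mfp/P_\mfp$ has a \emph{unique} character of order two, and therefore all quadratic characters of $G_{K_\mfp}$ ramified at $\mfp$ restrict to the same non-trivial character on $I_\mfp$. Casework on the four possible ramification patterns for $\chi_1, \chi_2$ at a fixed $\mfp$ then gives the symmetric-difference identity
\[
R_p(\chi_1 \chi_2) = R_p(\chi_1) \triangle R_p(\chi_2).
\]
Combining this with the surjectivity of $R_p$ provided by Lemma~\ref{lemma:grunwald-wang in order 2} identifies $\Hlab[2]/\ker R_p$ with $\Fz_2^{\cP_{K, p}}$, and analogously on the $K'$-side. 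Applying the weight identity to $\chi \in \ker R_p$ gives $w(R_p(\psi(\chi))) = 0$ and thus $R_p(\psi(\chi)) = \emptyset$; with the symmetric argument for $\psi^{-1}$, $\psi$ descends to a weight-preserving $\Fz_2$-linear isomorphism $\bar\psi \colon \Fz_2^{\cP_{K, p}} \DistTo \Fz_2^{\cP_{K', p}}$.

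The bijection $\phi$ is extracted by a short combinatorial argument on $\bar\psi$. Let $e_\mfp$ denote the characteristic vector of $\{\mfp\}$, so that $w(e_\mfp) = f_\mfp$. From the identity $w(A \triangle B) = w(A) + w(B) - 2 w(A \cap B)$ applied to $\bar\psi(e_{\mfp_1}) + \bar\psi(e_{\mfp_2})$, the weight-preservation of $\bar\psi$ forces $w(\bar\psi(e_{\mfp_1}) \cap \bar\psi(e_{\mfp_2})) = 0$, so the supports of the vectors $\bar\psi(e_\mfp)$ are pairwise disjoint. As $\bar\psi$ is a linear isomorphism we have $|\cP_{K, p}| = |\cP_{K', p}|$, and since the total weights $\sum_{\mfp \mid p} f_\mfp = [K:\Qz] = [K':\Qz] = \sum_{\mfq \mid p} f_\mfq$ agree, the disjoint supports in fact partition all of $\cP_{K', p}$. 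A trivial pigeonhole count then forces each $\operatorname{supp}(\bar\psi(e_\mfp))$ to be a singleton $\{\phi(\mfp)\}$. This $\phi$ is norm-preserving, and $R_p(\psi(\chi)) = \phi(R_p(\chi))$ follows from $\Fz_2$-linearity of $\bar\psi$ applied to $\chi = \sum_{\mfp \in R_p(\chi)} e_\mfp$, which is exactly the ramification compatibility demanded by the lemma.

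The main obstacle is the second paragraph: showing that $R_p$ is a bona fide group homomorphism. This depends crucially on the uniqueness of an order-two quotient of $I_\mfp$, a fact that holds precisely because $p$ is odd (and would genuinely fail for $p = 2$, which is why $S$ excludes it). Once that structural fact is secured, the weight-preservation and the reduction to a single $\Fz_2$-linear problem make the remainder a routine exercise.
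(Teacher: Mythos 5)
Your proof is correct, and while it rests on the same two pillars as the paper's --- (i) the degree of $L_p(E/K,\chi,T)$ equals $2[K:\Qz]$ minus twice the $f$-weight of the ramification locus of $\chi$ over $p$, and (ii) for odd $p$ the product of two quadratic characters ramified at $\mfp$ is unramified there --- the way you extract $\phi$ is genuinely different. The paper orders the primes over $p$ by inertia degree and runs an induction: it builds characters $\chiram{\mfp_i}$ ramified at a single prime, and the minimality assumption $f_{\mfp_i} \le \min f_{\mfq}$ is what forces $\psi(\chiram{\mfp_i})$ to ramify at exactly one prime $\mfq_i$ of the same norm; a separate contradiction argument then upgrades this to the ``ramified iff ramified'' statement for arbitrary $\chi$. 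You instead observe that (ii) makes $R_p$ a surjective homomorphism onto $\big(\Fz_2^{\cP_{K,p}}, \triangle\big)$, that (i) together with positivity of the weights forces $\psi$ to descend to a weight-preserving linear isomorphism $\bar\psi$, and that weight-preservation applied to $e_{\mfp_1}+e_{\mfp_2}$ forces the supports $\bar\psi(e_\mfp)$ to be pairwise disjoint, hence singletons by the dimension count $|\cP_{K,p}|=|\cP_{K',p}|$. This removes the induction, the sorting by norm, and the ``without loss of generality'' swap between $K$ and $K'$, and it delivers the full compatibility $R_p(\psi(\chi))=\phi(R_p(\chi))$ in one stroke by linearity rather than via a second contradiction argument. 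The trade-off is that the paper's inductive scheme is the template that generalizes in Section~4 to characters of odd prime order $l$, where nothing like the homomorphism $R_p$ is available because ramified degree-$l$ extensions of $K_\mfp$ need not exist; your argument is cleaner but specific to the quadratic case, and correctly isolates where oddness of $p$ enters (uniqueness of the order-two quotient of tame inertia).
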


\begin{proof}
We proceed by induction. Denote by $\mfp_1, \dots, \mfp_n$ the primes of $K$ lying over $p$, sorted by norm in increasing order. \\

\noindent \textbf{Induction hypothesis.} Suppose for some $i \geq 1$ we have the following.
\begin{itemize}
\item For any $j < i$ we have a prime $\mfq_j \in \cP_{K', p}$ with the same norm as $\mfp_j$.
\item For any $j < i$ the character $\chi$ is ramified at $\mfp_j$ if and only if $\psi(\chi)$ is ramified at $\mfq_j$.
\item We have $f_{\mfq_1} \leq f_{\mfq_2} \leq \dots \leq f_{\mfq_{i-1}} \leq \min \{ f_{\mfq}: \mfq \in \cP_{K', p} \backslash\{\mfq_1, \dots, \mfq_{i-1}\}\}$.
\end{itemize}
This statement is empty for $i = 1$. \\

\noindent \textbf{Induction step.}
Without loss of generality, assume that (one can swap the roles of $K$ and $K'$ if necessary)
\[
f_{\mfp_i} \leq \min \{ f_{\mfq}: \mfq \in \cP_{K', p}\text{ such that }\mfq \neq \mfq_j\, \forall j < i\}.
\]

The degree of $L_p(E/K, \chiram{\mfp_i}, T)$ as a polynomial in $T$ equals $2[K:\Qz] - 2f_{\mfp_i}$, as $\chiram{\mfp_i}$ is only ramified at $\mfp_i$. Hence the degree of $L_p(E/K, \psi(\chiram{\mfp_i}), T)$ must also equal $2[K:\Qz] - 2f_{\mfp_i}$, which equals $2[K':\Qz] - 2f_{\mfp_i}$ by Lemma~\ref{lemma:equal degree}. We know that $\psi(\chiram{\mfp_i})$ is unramified at the primes $\mfq_1, \dots, \mfq_{i-1}$ by the induction hypothesis. Furthermore,
\[
\deg L_p(E/K, \psi(\chiram{\mfp_i}), T) = 2[K':\Qz] - 2 \sum_{\mfq \mid p:\; \psi(\mfq) = 0} f_{\mfq}.
\]
By assumption $f_{\mfp_i} \leq f_{\mfq}$ for all $\mfq \neq \mfq_j$ for any $j < i$, hence the equality 
\[
2[K':\Qz] - 2 \sum_{\mfq \mid p: \psi(\mfq) = 0} f_{\mfq} = 2[K:\Qz] - 2f_{\mfp_i}
\]
can only hold if $\psi(\chiram{\mfp_i})$ is ramified at a single prime in $\cP_{K', p}$, which we call $\mfq_i$. Now $f_{\mfp_i} = f_{\mfq_i}$ follows from $\deg L_p(E/K, \psi(\chiram{\mfp_i}), T) = 2[K':\Qz] - 2 f_{\mfq_i}$. 

As the character $\psi(\chiram{\mfp_i})$ ramifies at $\mfq_i$, but not at $\mfq_j$ for all $j< i$, we have $\mfq_i \neq \mfq_j$. \\

\noindent We complete the proof of Lemma~\ref{lemma:same norm, same ramification} by checking that $\chi$ is ramified at $\mfp_i$ if and only if $\psi(\chi)$ is ramified at $\mfq_i$. We argue by contradiction: assume without loss of generality that $\chi$ is a character that is unramified at $\mfp_i$, but $\psi(\chi)$ is ramified at $\mfq_i$. The character $\chi \chiram{\mfp_i}$ has the same value as $\chi$ everywhere except at $\mfp_i$, where $\chi$ is unramified but $\chi \chiram{\mfp_i}$ is ramified. In particular,
\begin{align*}
\deg L_p(E/K, \chi, T) &= \deg L_p(E/K, \chi \chiram{\mfp_i}, T) + 2 f_{\mfp_i} \\
					   &> \deg L_p(E/K, \chi \chiram{\mfp_i}, T).
\end{align*}
The character $\psi(\chi \chiram{\mfp_i}) = \psi(\chi)\psi(\chiram{\mfp_i})$ has the same value at every $\mfq \in \cP_{K', p}$ as $\psi(\chi)$ except (possibly) at $\mfq_i$. At $\mfq_i$ both $\psi(\chi)$ and $\psi(\chiram{\mfp_i})$ are ramified: we claim that $\psi(\chi)\psi(\chiram{\mfp_i})$ is unramified there. Indeed, suppose $\chi_1, \chi_2 \in \Hlab[2]$ both ramify at a prime $\mfp$. The extension $K_{\chi_i}/K$ has a primitive element of the form $\sqrt{d_i}$, with $d_i \in K^\times / (K^\times)^2$. The extension $K_{\chi_1 \chi_2}$ is obtained by adjoining $\sqrt{d_1d_2}$ to $K$. Because $\chi_1$ and $\chi_2$ ramify at $\mfp$, both $v_{\mfp}(d_1)$ and $v_{\mfp}(d_2)$ are odd, hence $v_{\mfp}(d_1d_2)$ is even, i.e. $\mfp$ does not ramify in $K_{\chi_1\chi_2}/K$. Therefore
\begin{align*}
\deg L_p(E'/K', \psi(\chi), T) &= \deg L_p(E'/K', \psi(\chi \chiram{\mfp_i}), T) - 2 f_{\mfq_i} \\
&< \deg L_p(E'/K', \psi(\chi \chiram{\mfp_i}), T).
\end{align*}
These two inequalities contradict the equalities 
\[
L_p(E/K, \chi, T) = L_p(E'/K', \psi(\chi), T)
\]
and 
\[
L_p(E/K, \chi \chiram{\mfp_i}, T) = L_p(E'/K', \psi(\chi \chiram{\mfp_i}), T).
\]
The result now follows by setting $\phi(\mfp_i) := \mfq_i$. We have shown that $f_{\mfp_i} = f_{\mfq_i}$ for all $i$, and $\mfq_i \neq \mfq_j$ for all $i \neq j$, hence $\phi$ is an injective norm-preserving map. It is a bijection by symmetry of $K$ and $K'$.
\end{proof}

\begin{remark}
As we showed that $\chi$ is ramified at $\mfp$ if and only if $\psi(\chi)$ is ramified at $\phi(\mfp)$, it follows that $\phi$ is independent of the choices of $\chiram{\mfp_i}$.
\end{remark}

Finally, we show that $\phi$ has the desired properties, namely $a_{\mfp} = a'_{\phi(\mfp)}$ and $\chi(\mfp) = \psi(\chi)(\phi(\mfp))$. The following definition and lemma allow us to create a connection between the values of $\chi$ and $\psi(\chi)$. 

\begin{definition}\label{definition:chipos order 2}
Define $\chipos \in \Hlab[2]$ to be any character that maximises the value of the map 
\begin{align*}
\Hlab[2] &\to \Rz \\
\chi &\mapsto L_p(E/K, \chi, 1).
\end{align*} 
\end{definition}

\begin{lemma} \label{lemma: properties of chipos and psichipos}
The character $\chipos$ has the following properties.
\begin{enumerate}
\item For any $\mfp \mid p$, we have 
\[
\chipos(\mfp) = 
\begin{cases}
1 & \text{ if } a_\mfp > 0;\\
-1 & \text{ if } a_\mfp < 0.
\end{cases}
\]
\item $\psi(\chipos)$ maximises the map $\chi' \mapsto L(E'/K', \chi', 1)$, hence
\[
\psi(\chipos)(\mfq) = 
\begin{cases}
1 & \text{ if } a'_\mfq > 0;\\
-1 & \text{ if } a'_\mfq < 0.
\end{cases}
\]
\end{enumerate}
\end{lemma}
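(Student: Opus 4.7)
The plan is to compute $L_{\mfp}(E/K,\chi,1)$ explicitly as $\chi\in\Hlab[2]$ varies, identify the local configurations that maximise the product $L_p(E/K,\chi,1)=\prod_{\mfp\mid p}L_{\mfp}(E/K,\chi,1)$, realise such a configuration globally via Grunwald--Wang, and finally transport the result to $E'/K'$ through $\psi$.

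First I would unpack the local factor. Since $E$ has good reduction at every $\mfp\in\cP_{K,p}$, the representation $\rho_{E,l}$ is unramified at $\mfp$; consequently the $I_{\mfp}$-invariants of $\chi\otimes\rho_{E,l}$ vanish whenever $\chi$ is ramified at $\mfp$, forcing $L_{\mfp}(E/K,\chi,1)=1$ in that case. When $\chi$ is unramified at $\mfp$, the formula from Definition~\ref{definition:twisted_L_series} combined with $\chi(\mfp)^2=1$ specialises to
\[
L_{\mfp}(E/K,\chi,1) \;=\; 1 + \chi(\mfp)\,a_{\mfp} + p^{f_{\mfp}}.
\]
Using the Hasse--Weil bound $|a_{\mfp}|\le 2\sqrt{p^{f_{\mfp}}}$ and $p\ge 3$ (since $p\in S$), the largest of the three possible local values for $\chi(\mfp)\in\{-1,0,1\}$ (with $0$ denoting ramification at $\mfp$) is $1+p^{f_{\mfp}}+|a_{\mfp}|>1$, attained by an unramified $\chi$ with $\chi(\mfp)=\operatorname{sgn}(a_{\mfp})$ when $a_{\mfp}\ne 0$, and by either unramified sign when $a_{\mfp}=0$.

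Since each factor in $L_p(E/K,\chi,1)$ depends only on the values of $\chi$ at primes above $p$, maximising $L_p$ is equivalent to maximising each local factor separately. Lemma~\ref{lemma:grunwald-wang in order 2} provides a global character $\chi\in\Hlab[2]$ realising the prescribed optimal values in $\{-1,1\}$ at every $\mfp\mid p$; this is $\chipos$, and it satisfies assertion (1). For (2), Lemma~\ref{lemma:K=K' iff Lp=Lp'} applied to $L(E/K,\chi,s)=L(E'/K',\psi(\chi),s)$ yields the local identity $L_p(E/K,\chi,1)=L_p(E'/K',\psi(\chi),1)$ for every $\chi\in\Hlab[2]$; because $\psi\colon\Hlab[2]\DistTo\Hlabx[2]$ is a bijection, $\psi(\chipos)$ must then maximise $\chi'\mapsto L_p(E'/K',\chi',1)$ over $\Hlabx[2]$, and rerunning the local analysis on $E'/K'$ reads off the claimed signs at every $\mfq\mid p$.

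I do not anticipate a serious obstacle: the calculation is short once the local factor is written out, and the transport from $E/K$ to $E'/K'$ is purely formal. The only mild subtlety is the non-uniqueness of $\chipos(\mfp)$ at primes where $a_{\mfp}=0$ (and likewise of $\psi(\chipos)(\mfq)$ when $a'_{\mfq}=0$); but the statement of the lemma is silent on those primes, so this ambiguity is harmless.
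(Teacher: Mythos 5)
Your argument is essentially the paper's own proof: maximise the product $L_p(E/K,\chi,1)=\prod_{\mfp\mid p}(1+\chi(\mfp)a_{\mfp}+p^{f_{\mfp}})$ termwise using the Hasse bound, realise the optimal signs by Grunwald--Wang, and transport maximality through $\psi$ via the equality of local factors. Your explicit treatment of the ramified case (local factor $1$, strictly smaller than the optimal unramified value) is a small extra care the paper leaves implicit, but the route is the same.
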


\begin{proof} 
For any $\chi \in \Hlab[2]$ we have
\[
L_p(E/K, \chi, 1) = \prod_{\mfp \mid p} L_{\mfp}(E/K, \chi, 1) = \prod_{\mfp \mid p} (1 + \chi(\mfp)a_{\mfp} + p^{f_{\mfp}}).
\]
By the Hasse bound, $a_{\mfp} \leq 2p^{f_{\mfp} / 2} < p^{f_{\mfp}} + 1$, hence each term of the product is positive. It is therefore maximised by maximising each individual term, which is done precisely if
\[
\chi(\mfp) = 
\begin{cases}
1 & \text{ if } a_\mfp > 0;\\
-1 & \text{ if } a_\mfp < 0.
\end{cases}
\]

Let $\chi' \in \Hlabx[l]$. By maximality of $\chipos$, we have 
\[
L_p(E/K, \chipos, 1) \geq L_p(E/K, \psi^{-1}(\chi'), 1).
\]
As $L_p(E/K, \chi, T) = L_p(E'/K', \psi(\chi), T)$ for any $\chi \in \Hlab[2]$, this is equivalent to
\[
L_p(E'/K', \psi(\chipos), 1) \geq L_p(E'/K', \chi', 1).
\]
Hence $\psi(\chipos)$ maximises the map $\chi' \mapsto L(E'/K', \chi', 1)$. Using the first part of this lemma (applied to $K'$), it has the stated properties.
\end{proof}

\begin{lemma}\label{lemma:a=a'} 
For any $\mfp \in \cP_{K, p}$ we have $a_{\mfp} = a'_{\phi(\mfp)}$. 
\end{lemma}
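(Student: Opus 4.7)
The plan is to isolate each prime $\mfp_i \in \cP_{K,p}$ by a carefully chosen Grunwald--Wang character, extract $a_{\mfp_i}$ up to sign from a single-factor identity, and then pin down the sign by twisting with $\chipos$. Throughout, write $\mfq_i = \phi(\mfp_i)$.

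For the isolation step I would use Lemma~\ref{lemma:grunwald-wang in order 2} to produce a quadratic $\chi_i \in \Hlab[2]$ that is ramified at every $\mfp_j \in \cP_{K,p}$ with $j \neq i$ and satisfies $\chi_i(\mfp_i) = 1$. Lemma~\ref{lemma:same norm, same ramification} then guarantees that $\psi(\chi_i)$ has the mirror ramification pattern under $\phi$, so that both $L_p(E/K, \chi_i, T)$ and $L_p(E'/K', \psi(\chi_i), T)$ collapse to a single quadratic Euler factor. Since $f_{\mfp_i} = f_{\mfq_i}$, equating the coefficient of $T^{f_{\mfp_i}}$ in the identity $L_p(E/K, \chi_i, T) = L_p(E'/K', \psi(\chi_i), T)$ yields
\[
a_{\mfp_i} = \psi(\chi_i)(\mfq_i)\, a'_{\mfq_i}.
\]
This immediately settles the case $a_{\mfp_i} = 0$ (forcing $a'_{\mfq_i} = 0$, since $\psi(\chi_i)(\mfq_i) \in \{\pm 1\}$) and gives the absolute-value equality $|a_{\mfp_i}| = |a'_{\mfq_i}|$ in general.

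To fix the sign when $a_{\mfp_i} \neq 0$, I would repeat the same isolation argument with the character $\chi_i^+ := \chi_i \cdot \chipos$. Since $\chipos$ is unramified at every prime over $p$ (which follows from its maximization property: any ramified factor would strictly decrease the value at $T=1$, as $1 + |a_\mfp| + p^{f_\mfp} > 1$ by the Hasse bound), the character $\chi_i^+$ inherits the ramification pattern of $\chi_i$ but satisfies $\chi_i^+(\mfp_i) = \chipos(\mfp_i)$. Running the same single-factor comparison with $\chi_i^+$ in place of $\chi_i$, and using that $\psi$ is a group homomorphism, yields
\[
\chipos(\mfp_i)\, a_{\mfp_i} = \psi(\chi_i)(\mfq_i)\, \psi(\chipos)(\mfq_i)\, a'_{\mfq_i}.
\]
Dividing by the previous identity (both sides nonzero under the current assumption) gives the clean equality $\chipos(\mfp_i) = \psi(\chipos)(\mfq_i)$. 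By Lemma~\ref{lemma: properties of chipos and psichipos} these equal $\mathrm{sign}(a_{\mfp_i})$ and $\mathrm{sign}(a'_{\mfq_i})$ respectively, so the signs agree; combined with $|a_{\mfp_i}| = |a'_{\mfq_i}|$ this yields $a_{\mfp_i} = a'_{\mfq_i}$.

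The main obstacle is precisely this sign ambiguity: a single isolating character determines $a_{\mfp_i}$ only up to the unknown sign $\psi(\chi_i)(\mfq_i)$, about which the $L$-series identity says nothing directly. The device of twisting by $\chipos$ is what cancels this unknown sign and replaces it by $\psi(\chipos)(\mfq_i)$, whose value is forced by the maximality definition of $\chipos$ via Lemma~\ref{lemma: properties of chipos and psichipos}.
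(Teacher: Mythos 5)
Your proposal is correct and follows essentially the same route as the paper: isolate $\mfp$ with the character the paper calls $\chiunram{\mfp}$, deduce $a_{\mfp} = \psi(\chiunram{\mfp})(\phi(\mfp))\,a'_{\phi(\mfp)}$ hence $|a_{\mfp}| = |a'_{\phi(\mfp)}|$, and then twist by $\chipos$ together with Lemma~\ref{lemma: properties of chipos and psichipos} to fix the sign. The only (cosmetic) difference is that you divide the two identities to get $\chipos(\mfp) = \psi(\chipos)(\phi(\mfp))$ directly, whereas the paper phrases the final step as a proof by contradiction.
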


\begin{proof}
Let $\chiunram{\mfp} \in \Hlab[2]$ be any character such that $\chiunram{\mfp}$ has value $1$ at $\mfp$ and is ramified at all other primes of $K$ lying over $\mfp$. Then 
\[
L_p(E/K, \chiunram{\mfp}, 1) = 1 + a_{\mfp} + p^{f_{\mfp}}.
\]
We know by Lemma~\ref{lemma:same norm, same ramification} that $\psi(\chiunram{\mfp})$ is ramified at all primes lying over $p$ except $\phi(\mfp)$. Hence
\[
L_p(E'/K', \psi(\chiunram{\mfp}), 1) = 1 + \psi(\chiunram{\mfp})(\phi(\mfp))a'_{\phi(\mfp)} + p^{f_{\phi(\mfp)}}.
\]
As $\phi$ is norm-preserving, we find $a_{\mfp} = \psi(\chiunram{\mfp})(\phi(\mfp))a'_{\phi(\mfp)}$. 
In particular it follows that $|a_{\mfp}| = |a'_{\phi(\mfp)}|$, and thus $a_{\mfp} = 0$ if and only if $a'_{\phi(\mfp)} = 0$. 

We now argue by contradiction. Suppose we have $a_{\mfp} \neq a'_{\phi(\mfp)}$. Then $a_{\mfp} \neq 0$ and $a_{\mfp} = - a'_{\phi(\mfp)}$, and therefore $\psi(\chiunram{\mfp})(\phi(\mfp)) = -1$. Without loss of generality, assume $a_{\mfp} > 0$. The character $\chi \chipos$ is ramified at all primes lying over $p$ except $\mfp$, hence
\[
L_p(E/K, \chiunram{\mfp} \chipos, 1) = 1 + \chi(\mfp) \chipos(\mfp) a_{\mfp} + p^{f_{\mfp}} =  1 + a_{\mfp} + p^{f_{\mfp}}.
\]
Similarly,
\[
L_p(E'/K', \psi(\chiunram{\mfp} \chipos), 1) = 1 + \psi(\chiunram{\mfp} \chipos)(\phi(\mfp)) a'_{\phi(\mfp)} + p^{f_{\phi(\mfp)}} =  1 + a'_{\phi(\mfp)} + p^{f_{\phi(\mfp)}}
\]
as $a'_{\phi(\mfp)} < 0$ implies $\psi(\chipos)(\phi(\mfp)) = -1$. However, then $a_{\mfp} = a'_{\phi(\mfp)}$; a contradiction. We conclude that $a_{\mfp} = a'_{\phi(\mfp)}$ for all $\mfp$ (hence $\psi(\chiunram{\mfp})(\phi(\mfp)) = 1$).
\end{proof}

\begin{lemma}\label{lemma: chip = psichiphip}
For any $\mfp \in \cP_{K, p}$ such that $a_{\mfp} \neq 0$ and for any $\chi \in \Hlab[2]$ we have $\chi(\mfp) = \psi(\chi)(\phi(\mfp))$.
\end{lemma}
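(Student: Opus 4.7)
If $\chi$ is ramified at $\mfp$ then $\chi(\mfp) = 0$, and by Lemma~\ref{lemma:same norm, same ramification} $\psi(\chi)$ is ramified at $\phi(\mfp)$, so $\psi(\chi)(\phi(\mfp)) = 0$. The substantive case is when $\chi$ is unramified at $\mfp$, where I set $c := \chi(\mfp) \in \{\pm 1\}$. My plan is to first establish the identity on a special family of characters by isolating the local factor at $\mfp$, and then to reduce the general case to that family by a multiplicativity argument.

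For the isolated case, I would apply Lemma~\ref{lemma:grunwald-wang in order 2} to pick $\chi^{\ast} \in \Hlab[2]$ with $\chi^{\ast}(\mfp) = c$ that is ramified at every prime of $K$ above $p$ distinct from $\mfp$. Then every local factor of $L_p(E/K, \chi^{\ast}, T)$ except the one at $\mfp$ is trivial, so
\[
L_p(E/K, \chi^{\ast}, T) = 1 + c\, a_\mfp T^{f_\mfp} + p^{f_\mfp} T^{2 f_\mfp}.
\]
By Lemma~\ref{lemma:same norm, same ramification}, $\psi(\chi^{\ast})$ is ramified at every prime of $K'$ above $p$ other than $\phi(\mfp)$, so analogously $L_p(E'/K', \psi(\chi^{\ast}), T) = 1 + \psi(\chi^{\ast})(\phi(\mfp))\, a'_{\phi(\mfp)} T^{f_{\phi(\mfp)}} + p^{f_{\phi(\mfp)}} T^{2 f_{\phi(\mfp)}}$. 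Using Lemma~\ref{lemma:a=a'} (so $a_\mfp = a'_{\phi(\mfp)} \neq 0$) and the equality of norms $f_\mfp = f_{\phi(\mfp)}$, Lemma~\ref{lemma:K=K' iff Lp=Lp'} then forces $\psi(\chi^{\ast})(\phi(\mfp)) = c = \chi^{\ast}(\mfp)$.

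Now I would write $\chi = \chi^{\ast} \cdot \eta$ with $\eta := \chi \chi^{\ast}$; then $\eta$ is unramified at $\mfp$ with $\eta(\mfp) = c^2 = 1$, and by multiplicativity of $\psi$ the desired identity reduces to the subclaim $\psi(\eta)(\phi(\mfp)) = 1$ for every $\eta \in \Hlab[2]$ unramified at $\mfp$ with $\eta(\mfp) = 1$. To prove it, I would apply Lemma~\ref{lemma:grunwald-wang in order 2} once more to obtain $\eta^{\ast} \in \Hlab[2]$ with $\eta^{\ast}(\mfp) = 1$ that is ramified at each $\mfp' \ne \mfp$ above $p$ where $\eta$ is \emph{unramified} and unramified at each $\mfp' \ne \mfp$ above $p$ where $\eta$ is \emph{ramified}. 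By the valuation-parity observation used in the proof of Lemma~\ref{lemma:same norm, same ramification} (the product of two characters ramified at a common prime is unramified there), the product $\eta \eta^{\ast}$ is then ramified at every prime above $p$ other than $\mfp$ and takes value $1$ at $\mfp$, placing it in the isolated family of the previous paragraph with $c = 1$. Hence $\psi(\eta \eta^{\ast})(\phi(\mfp)) = 1$, so $\psi(\eta)(\phi(\mfp)) = \psi(\eta^{\ast})(\phi(\mfp))$.

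The main obstacle is closing this loop, since the construction initially only trades the subclaim for $\eta$ for the same subclaim for $\eta^{\ast}$. My plan to escape is to compare the factorizations of $L_p(E/K, \eta^{\ast}, T)$ and $L_p(E'/K', \psi(\eta^{\ast}), T)$ in $\mathbb{C}[T]$: the $\mfp$-local factor $1 + a_\mfp T^{f_\mfp} + p^{f_\mfp} T^{2f_\mfp}$ appears on the left, while the hypothesis $\psi(\eta^{\ast})(\phi(\mfp)) = -1$ would place $1 - a_\mfp T^{f_\mfp} + p^{f_\mfp} T^{2f_\mfp}$ among the factors of the right-hand side, forcing a matching factor on the left coming from some $\mfp' \neq \mfp$ above $p$ with $f_{\mfp'} = f_\mfp$ and $\eta^{\ast}(\mfp') a_{\mfp'} = -a_\mfp$. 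When no such "twin" exists, this is already a contradiction; when it does, I would perform a further Grunwald--Wang modification of $\eta^{\ast}$ that flips its value exactly at the twin primes, arguing that the characterization of $\phi$ in Lemma~\ref{lemma:same norm, same ramification} (namely that $\phi(\mfp)$ is singled out by the ramification behaviour of $\psi(\chiram{\mfp})$ and not merely by norms and $a$-values) rules out the swap. This combinatorial bookkeeping in the presence of repeated local factors is the hardest part of the argument.
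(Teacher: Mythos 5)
Your first step is correct and matches the paper's use of $\chiunram{\mfp}$: for a character ramified at every prime above $p$ except $\mfp$, isolating the single surviving local factor and invoking Lemma~\ref{lemma:a=a'} does pin down the value of its image at $\phi(\mfp)$. The problem is the passage to general $\chi$. Your reduction replaces the claim for $\eta$ by the identical claim for $\eta^{\ast}$, and the escape sketched in your last paragraph does not close the loop: equality of the polynomials $L_p(E/K,\eta^{\ast},T)$ and $L_p(E'/K',\psi(\eta^{\ast}),T)$ only determines the multiset of quadratic local factors, not which factor sits at which prime, and Lemma~\ref{lemma:same norm, same ramification} characterizes $\phi$ purely through the ramification of quadratic characters, so it says nothing about the value of the \emph{unramified} character $\psi(\eta^{\ast})$ at $\phi(\mfp)$ versus at a twin prime. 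The ``further Grunwald--Wang modification'' just produces yet another character of the same kind whose image is equally undetermined, so the combinatorial bookkeeping you defer is not a technicality but the entire content of the lemma; as written the argument is circular.

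The paper avoids the loop with two devices you do not use. First it proves the identity for every $\tilde{\chi}$ that is unramified at \emph{all} primes above $p$: the product $\tilde{\chi}\,\chiunram{\mfp}$ lies in your isolated family with value $\tilde{\chi}(\mfp)$ at $\mfp$, and the circularity is broken because $\psi(\chiunram{\mfp})(\phi(\mfp))=1$ is already known --- it is the parenthetical conclusion of the proof of Lemma~\ref{lemma:a=a'}, and is also your Step 1 with $c=1$ --- so $\psi(\tilde{\chi})(\phi(\mfp))$ can be read off. Second, for arbitrary $\chi$ it does not factor $\chi$ at all: it takes $\chi_{\mfp}$ unramified at every prime above $p$ with value $-1$ at $\mfp$ and $+1$ elsewhere, and compares the coefficient of $T^{f_{\mfp}}$ of the difference $L_p(E/K,\chi,T)-L_p(E/K,\chi\chi_{\mfp},T)$, which equals $2\chi(\mfp)a_{\mfp}$, with the corresponding difference on the $K'$ side, which equals $2\psi(\chi)(\phi(\mfp))a'_{\phi(\mfp)}$ because the first step determines $\psi(\chi_{\mfp})$ at every $\mfq$ with $a'_{\mfq}\neq 0$. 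You would need to supply both of these ingredients (or something equivalent) to make the proof complete.
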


\begin{proof}
We first prove this for all characters that are unramified at all primes lying over $p$. Let $\tilde{\chi}$ be such a character, and let $\mfp \in \cP_{K, p}$ be any prime lying over $p$ such that $a_{\mfp} \neq 0$. As in the previous lemma, let $\chiunram{\mfp} \in \Hlab[2]$ be any character such that $\chi(\mfp) = 1$ and that $\chiunram{\mfp}$ is ramified at all other primes of $K$ lying over $\mfp$. Then 
\[
L_p(E/K, \tilde{\chi} \chiunram{\mfp}, 1) = 1 + \tilde{\chi}(\mfp) a_{\mfp} + p^{f_{\mfp}}.
\]
In the proof of Lemma~\ref{lemma:a=a'} we have seen that $\psi(\chi)(\phi(\mfp)) = 1$. Hence
\[
L_p(E'/K', \psi(\tilde{\chi} \chiunram{\mfp}), 1) = 1 + \psi(\tilde{\chi})(\phi(\mfp)) a'_{\phi(\mfp)} + p^{f_{\phi(\mfp)}}.
\]
As a result, $\tilde{\chi}(\mfp) a_{\mfp} = \psi(\tilde{\chi})(\phi(\mfp)) a'_{\phi(\mfp)}$, hence by Lemma~\ref{lemma:a=a'} and the assumption that $a_{\mfp} \neq 0$ we find $\tilde{\chi}(\mfp) = \psi(\tilde{\chi})(\phi(\mfp))$. 

Now let $\chi \in \Hlab[2]$ be any character. We show that $\chi(\mfp) = \psi(\chi)(\phi(\mfp))$. Let $\chi_{\mfp}\in \Hlab[2]$ be any character that has value $-1$ at $\mfp$, and value $1$ at all other primes lying over $p$. It follows that
\[
[T^{f_{\mfp}}] \big(L_p(E/K, \chi, T) - L_p(E'/K', \chi \chi_{\mfp}, T) \big) = 2\chi(\mfp)a_{\mfp}.
\]
As $\chi_{\mfp}$ is unramified at all primes lying over $p$, the first part of the proof implies that $\psi(\chi_\mfp)$ has value $-1$ on $\phi(\mfp)$ and value $1$ on all other primes of $K'$ lying over $p$. Hence
\[
[T^{f_{\mfp}}] \big(L_p(E'/K', \psi(\chi), T) - L_p(E'/K', \psi(\chi \chi_{\mfp}), T)\big) = 2\psi(\chi)(\phi(\mfp))a'_{\phi(\mfp)},
\]
and thus $2\chi(\mfp)a_{\mfp} = 2\psi(\chi)(\phi(\mfp))a'_{\phi(\mfp)}$. In particular, if $a_{\mfp} \neq 0$, then $\chi(\mfp) = \psi(\chi)(\phi(\mfp))$.
\end{proof}

This concludes the proof of Theorem~\ref{theorem: from L-functions of elliptic curves to prime bijection}.

\section{Abelian varieties are determined by their twisted $L$-series of sufficiently high order}\label{section:abelian varieties}
The idea of the proof of Theorem~\ref{theoremA:abelian varieties are isogenous through L-series} is similar to that of Theorem~\ref{theoremA: elliptic curves are isogenous through L-series}; we create an injective norm-preserving map of primes $\phi: \cS \to \cP_{K'}$ for some density one subset $\cS \subseteq \cP_{K}$ with additional properties. However, we now consider characters of a general prime order $l$, which complicates matters:
\begin{itemize}
\item In general there do not exist ramified extensions of given degree $l > 2$ over a given prime $\mfp$, hence it is often impossible to create characters of order $l$ such that $\chi(\mfp) = 0$, which were vital in the proof of Theorem~\ref{theorem: from L-functions of elliptic curves to prime bijection}, see Definition~\ref{definition:ramified character of order 2}. This lengthens the proof significantly and necessitates some heavier calculations.
\item A character of order two admits only real values; a character of higher order does not. This makes it unfeasible to maximise the map $\chi \mapsto L_p(A/K, \chi, 1)$ as we did in Definition~\ref{definition:chipos order 2}. We work around this by using an inductive argument and a (slightly) different function to maximise (see Definition~\ref{definition:the map c}).
\end{itemize}

Let $S$ be the set of odd rational primes $p$ for which:
\begin{itemize}
\item $p$ is unramified in both $K$ and $K'$, and
\item $A$ and $A'$ have good reduction at all primes lying over $p$ of $K$ and $K'$ respectively.
\end{itemize}
Note that $S$ contains all but finitely many primes, as both $A$ and $A'$ have only finitely many primes of bad reduction (see \cite{serretateNOS}). Define $\cS \subseteq \cP_K$ as the set of primes of $K$ that lie over rational primes in $S$, and definite $\cS' \subseteq \cP_{K'}$ similarly.
 
As in Section~\ref{section:proof non-CM elliptic curves}, we do not prove Theorem~\ref{theoremA:abelian varieties are isogenous through L-series} completely in this section, but rather we show the following theorem holds.

\begin{theorem}\label{theorem:local factors are equal}
Let $K$, $K'$ be number fields, $A/K$ and $A'/K'$ abelian varieties of dimension $d$ and $d'$ respectively. Let $l > 2d$ be prime. Suppose there exists an isomorphism $\psi: \Hlab[l] \DistTo \Hlabx[l]$ such that 
\[
L(A/K, \chi, s) = L(A'/K', \psi(\chi), s)
\]
for all $\chi \in \Hlab[l]$. There exists a norm-preserving bijection of primes $\phi: \cS \to \cS'$ such that 
\[
L_\mfp(A/K, \chi, T) = L_{\phi(\mfp)}(A'/K', \psi(\chi), T)
\]
and $\mfp \in \cS$ for any $\chi \in \Hlab[l]$.
\end{theorem}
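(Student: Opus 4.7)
By Lemma~\ref{lemma:K=K' iff Lp=Lp'}, it suffices to fix a rational prime $p \in S$ and build a norm-preserving bijection $\phi_p \colon \cP_{K,p} \to \cP_{K',p}$ with $L_\mfp(A/K, \chi, T) = L_{\phi_p(\mfp)}(A'/K', \psi(\chi), T)$ for every $\chi \in \Hlab[l]$; the desired $\phi$ is then the union over $p \in S$. I would follow the skeleton of Section~\ref{section:proof non-CM elliptic curves}, but two ingredients of the elliptic case are not available here. First, for $l > 2$ one cannot in general force $\chi(\mfp) = 0$ at a chosen prime above $p$, so Definition~\ref{definition:ramified character of order 2} has no analogue. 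Second, order-$l$ characters are complex-valued, so the real maximization of Definition~\ref{definition:chipos order 2} fails. These are compensated respectively by (a) a Grunwald--Wang / Fourier trick enabled by the hypothesis $l > 2d$, and (b) replacing the real maximization by maximization of a symmetric real-valued function (e.g.\ $\prod_{k=1}^{l-1} L_p(A/K,\chi^k,1)$, which is real because the factors pair as complex conjugates), combined with an induction on inertia degree that matches the primes above $p$ from smallest $f_\mfp$ upwards.

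\textbf{Fourier extraction at a single prime.} For each $\mfp \in \cP_{K,p}$, Lemma~\ref{lemma:grunwald-wang in any order} provides a character $\eta_\mfp \in \Hlab[l]$ with $\eta_\mfp(\mfp) = \zeta_l$ (a fixed primitive $l$-th root of unity) and $\eta_\mfp(\mfp') = 1$ at the other primes $\mfp' \mid p$. For any auxiliary $\chi_0 \in \Hlab[l]$ unramified above $p$ and each $j \in \{0, 1, \ldots, 2d\}$, I would apply the discrete Fourier transform $\sum_{k=0}^{l-1} \zeta_l^{-jk}(\cdot)$ to
\[
L_p(A/K, \chi_0 \eta_\mfp^k, T) \;=\; L_p(A'/K', \psi(\chi_0)\psi(\eta_\mfp)^k, T).
\]
Orthogonality of $l$-th roots of unity, together with $l > 2d$, collapses the left-hand side to the clean monomial
\[
l \cdot a_{\mfp,j}\, \chi_0(\mfp)^j\, T^{j f_\mfp} \prod_{\mfp' \neq \mfp} L_{\mfp'}(A/K, \chi_0, T).
\]
Writing $\psi(\eta_\mfp)(\mfq) = \zeta_l^{m_\mfq}$, the right-hand side reduces to $l \sum \prod_\mfq a'_{\mfq, i_\mfq}\,\psi(\chi_0)(\mfq)^{i_\mfq}\, T^{i_\mfq f_\mfq}$, summed over tuples $(i_\mfq)_{\mfq \mid p}$ with $\sum_\mfq m_\mfq i_\mfq \equiv j \pmod l$.

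\textbf{Matching and the main obstacle.} Running the extraction at $j = 1$ and varying $\chi_0$ over characters produced by Lemma~\ref{lemma:grunwald-wang in any order}, I would exploit the rigidity of the left-hand side --- a single monomial in $T^{f_\mfp}$ times a fixed polynomial, with the scalar $\chi_0(\mfp)$ freely varying --- to force $m_\mfq$ to be nonzero for a unique $\mfq^* \in \cP_{K',p}$; that $\mfq^*$ becomes $\phi_p(\mfp)$. Norm-preservation $f_\mfp = f_{\phi_p(\mfp)}$ then follows by comparing surviving $T$-degrees, injectivity from the uniqueness of $\mfq^*$, and bijectivity by swapping the roles of $K$ and $K'$. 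Running the extraction for all $j \in \{0, 1, \ldots, 2d\}$ and assembling the identities via Vandermonde (again using $l > 2d$) upgrades the prime bijection to the pointwise local factor identity $L_\mfp(A/K,\chi,T) = L_{\phi_p(\mfp)}(A'/K',\psi(\chi),T)$ for every $\chi \in \Hlab[l]$. The hardest step I anticipate is rigorously establishing the ``exactly one nonzero $m_\mfq$'' conclusion: multiple nonzero $m_\mfq$ could in principle produce constructive interference that mimics the rigid left-hand side. Ruling this out should require the inductive ordering of primes by inertia degree (restricting at each stage to characters trivial on already-matched primes) together with the symmetric real-valued maximization function playing the role that $\chipos$ plays for quadratic twists. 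Residual care is also needed for primes above $p$ at which $\eta_\mfp$ or $\chi_0$ happens to ramify (possible only when $l \mid p^{f_\mfp} - 1$), handled by a separate case analysis on the inertia-fixed subspace of the local factor.
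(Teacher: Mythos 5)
Your Fourier-extraction mechanism is sound as far as it goes (orthogonality of $l$-th roots of unity plus $l>2d$ does collapse the left-hand side to a single monomial times $\prod_{\mfp'\neq\mfp}L_{\mfp'}(A/K,\chi_0,T)$, and it is a genuinely different way of isolating individual local coefficients than the paper's comparison of $T^f$-coefficients), but the proposal has a real gap exactly where you flag it: showing that $\psi(\eta_\mfp)$ is nontrivial at precisely one prime of $K'$ above $p$. This is the heart of the matter, and the two tools you offer to close it do not work as stated. First, the function $\chi\mapsto\prod_{k=1}^{l-1}L_p(A/K,\chi^k,1)$ is real, but it is a product of products mixing all the coefficients $\ap_j$ nonlinearly, and its maximizers admit no clean termwise characterization; the paper instead maximizes the real part of the \emph{linear} functional $c(\chi)=\sum_{\mfp\in\cQ_{K,f}}(1-\chi(\mfp)^{f/f_\mfp})\apf$ built from a single $T^f$-coefficient, whose maximizers are pinned down term by term using the extremal $l$-th roots of unity $\zeta^{\pm1}$ (Lemma~\ref{lemma: property of maximiser}). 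Second, induction on inertia degree does not rule out the ``constructive interference'' you worry about: what kills it in the paper is the inner induction that orders the primes by the \emph{size of the coefficients} $\big(a_{\mfp_1}\big)_{f/f_{\mfp_1}}\leq\cdots\leq\big(a_{\mfp_s}\big)_{f/f_{\mfp_s}}$, so that the lower bound $\Real\big(c'(\psi(\chi_i))\big)\geq t\,(1-P)\apfi$ (with $t$ the number of primes where $\psi(\chi_i)$ is nontrivial, every unmatched coefficient being at least $\apfi$ and of the same sign) can be played against $\Real(c(\chi_i))=(1-P)\apfi$ to force $t=1$. Without that monotone ordering of coefficient values, an adversarial configuration with several nonzero $m_\mfq$ is not excluded by degree or rigidity considerations alone.

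Two further points. You never establish $d=d'$ and $[K:\Qz]=[K':\Qz]$, which your degree comparisons and Vandermonde step implicitly require; the paper needs a separate argument (Lemma~\ref{lemma: same dimension}) using totally split primes $p\equiv 1\bmod l$ and characters ramified at all but one prime above $p$. Also note that the paper's outer induction is on the exponent $f$ of $T$ (via the sets $\cQ_{K,f}$ of primes with $f_\mfp\mid f$ and $\apf\neq 0$), not on inertia degree, precisely because a prime only becomes ``visible'' at level $f$ when some coefficient $\apf$ is nonzero; your extraction at a fixed $j$ with $a_{\mfp,j}=0$ yields the zero polynomial and gives no information, so some analogue of this bookkeeping is unavoidable in your scheme as well.
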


\begin{lemma}\label{lemma: same dimension}
Suppose there exists a bijection $\psi: \Hlab[l] \to \Hlabx[l]$ that maps the trivial character to the trivial character such that 
\[
L(A/K, \chi, s) = L(A'/K', \psi(\chi), s)
\]
then the dimension $d$ of $A$ is the same as the dimension $d'$ of $A'$, and $[K:\Qz] = [K':\Qz]$. 
\end{lemma}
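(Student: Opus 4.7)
The plan is to first extract $d[K:\Qz] = d'[K':\Qz]$ from the trivial character, and then separate the two factors by twisting with a carefully chosen ramified character of order $l$. Setting $\chi = 1_K$ and applying Lemma~\ref{lemma:K=K' iff Lp=Lp'} yields $L_p(A/K, 1_K, T) = L_p(A'/K', 1_{K'}, T)$ for every rational prime $p$. For any $p$ of good reduction unramified in both $K$ and $K'$, the left side is a polynomial in $T$ of degree $2d[K:\Qz]$ with leading coefficient $p^{d[K:\Qz]}$, so comparing with the analogous data on the right gives $d[K:\Qz] = d'[K':\Qz]$.

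To separate $d$ from $[K:\Qz]$ I will twist by an order-$l$ character ramifying at a single prime of residue degree one. Select $p \in S$ such that some prime $\mfp$ of $K$ above $p$ has $f_\mfp = 1$ and moreover $p = l$ or $p \equiv 1 \pmod l$, which is equivalent to $K_\mfp = \Qz_p$ admitting a ramified cyclic extension of degree $l$. Such $p$ form a positive-density set by combining Chebotarev (for the splitting condition on $K$) with Dirichlet's theorem (for the congruence), applied to the compositum of the Galois closure of $K/\Qz$ and $\Qz(\zeta_l)$. Since $l$ is odd, the exceptional case of Grunwald-Wang is avoided, so I may produce $\chi \in \Hlab[l]$ whose local restriction at $\mfp$ is the order-$l$ character of the chosen ramified extension of $\Qz_p$ and whose local restriction at every other prime of $K$ above $p$ is trivial.

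The key observation is that whenever $\chi$ ramifies at a prime $\mfp'$ of good reduction, the inertia of the twisted representation acts on $\Cz \otimes \text{Hom}(V_lA, \Qz_l)$ via $\chi|_{I_{\mfp'}}$ alone (as $\rho_{A,l}|_{I_{\mfp'}}$ is trivial), so the invariants vanish and $L_{\mfp'}(A/K, \chi, T) = 1$. Thus $\deg L_p(A/K, \chi, T) = 2d([K:\Qz] - 1)$. By the hypothesis and Lemma~\ref{lemma:K=K' iff Lp=Lp'}, this polynomial equals $L_p(A'/K', \psi(\chi), T)$, whose degree is $2d'([K':\Qz] - R)$ with $R$ the sum of $f_\mfq$ over primes $\mfq$ of $K'$ above $p$ at which $\psi(\chi)$ is ramified. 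Substituting $d[K:\Qz] = d'[K':\Qz]$ reduces the degree equation to $d = d' R$, which forces $R \geq 1$ and hence $d \geq d'$.

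Running the symmetric construction---an analogous character over $K'$ together with $\psi^{-1}$---yields $d' \geq d$, so $d = d'$, and the first-stage identity then forces $[K:\Qz] = [K':\Qz]$. I expect the main technical obstacle to be the Grunwald-Wang step with prescribed local ramification pattern: the paper's Lemmas~\ref{lemma:grunwald-wang in any order} and~\ref{lemma:grunwald-wang in order 2} only cover the unramified order-$l$ case and the quadratic case, so this argument invokes the general form of Grunwald-Wang with prescribed local characters rather than merely prescribed values.
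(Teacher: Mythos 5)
Your proposal is correct and follows essentially the same route as the paper: use the trivial character to get $d[K:\Qz]=d'[K':\Qz]$, then produce via Grunwald--Wang an order-$l$ character with prescribed ramification above a prime $p\equiv 1 \pmod l$ having a degree-one prime in $K$, compare degrees of $L_p$ to get $d'\mid d$, and conclude by symmetry. The only (cosmetic) difference is that you ramify your character at exactly one prime above $p$ while the paper takes $p$ completely split in both fields and ramifies at all primes above $p$ except one; both correctly require the full Grunwald--Wang theorem with prescribed local characters, as the paper itself also does at this step.
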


\begin{proof}
By the Chebotarev density theorem (\cite{neukirch2013algebraic}, Ch. VII, \S 13, page 545) there is a positive density of rational primes that split completely in $K$, $K'$ and $\Qz(\zeta_l)$ at the same time. As there are only finitely many primes at which $A$ or $A'$ has bad reduction, there exists a rational prime $p \equiv 1 \text{ mod } l$ (this is equivalent to $p$ splitting completely in $\Qz(\zeta_l)$) that splits completely in $K$ and $K'$, such that furthermore $A$ has good reduction at all $\mfp \mid p$, and $A'$ has good reduction at all $\mfq \mid p$.

For such a rational prime $p$, the degree of the polynomial 
\[
\Lsp{1_K}
\]
is equal to $2d\# \{\mfp \mid p\} = 2d[K:\Qz]$. Similarly, as $\psi(1_K)$ is the trivial character of $K'$, the degree of 
\[
\Lspx{\psi(1_K)}
\]
equals $2d'[K':\Qz]$, hence we find 
\begin{equation}\label{eq:d*degree}
d[K:\Qz] = d'[K':\Qz].
\end{equation}

Let $\mfp$ be a prime of $K$ lying over $p$. As $p$ is totally split, $K_{\mfp}$ is isomorphic to $\mathbb{Q}_p$. As $\mathbb{Q}_p$ contains the $(p-1)^\text{th}$ roots of unity, adding a $(p-1)^\text{th}$ root of a uniformizer creates a totally ramified Galois extension with Galois group $\Zz/(p-1)\Zz$. Finally, $l \mid p-1$, thus $\Zz/l\Zz$ is a quotient of $\Zz/(p-1)\Zz$. We conclude that $K_{\mfp}$ has a ramified extension of degree $l$.

It follows by the Grunwald-Wang theorem that there exists a character $\chi \in \Hlab[l]$ that ramifies at all primes lying over $p$ except at a single prime $\tilde{\mfp} \in \cP_{K',p}$. Then
\[
\Lsp{\chi} = L_{\tilde{\mfp}}(A/K, \chi, T),
\]
which is a polynomial of degree $2d$. As $p$ is totally split in $K'$, the degree of $\Lspx{\psi(\chi)}$ is equal to
\[
2d' \cdot \# \{\mfq \mid p: \psi(\chi) \text{ unramified at $\mfq$} \}.
\]
This implies $d'\mid d$. By symmetry of $K$ and $K'$, we conclude that $d = d'$. By (\ref{eq:d*degree}), we also have $[K:\Qz] = [K':\Qz]$.
\end{proof}

\begin{remark}
The equality $L(A/K, s) = L(A'/K', s)$ (without equality of twisted $L$-series) implies that $d[K:\Qz] = d'[K':\Qz]$. The author does not know whether or not it necessarily holds that $d = d'$ and $[K:\Qz] = [K':\Qz]$.
\end{remark}

We state and prove a corollary of Theorem~\ref{theorem:local factors are equal} that is useful for proving Theorem~\ref{theoremA:abelian varieties are isogenous through L-series}.

\begin{corollary}\label{cor:wantedprops}
Keeping the notation of Theorem~\ref{theorem:local factors are equal}, we have the following.
\begin{enumerate}
\item The map $\phi$ is norm-preserving. \label{cor:1}
\item For any $\mfp \in \cS$ and $1 \leq i \leq 2d$ we have $\ap_i = \aphip_i$.
\item  For any character $\chi \in \Hlab[l]$ and any $\mfp \in \cS$ we have that
\[
\psi(\chi)(\phi(\mfp)) = \chi(\mfp).
\]
\label{cor:3}
\end{enumerate}
\end{corollary}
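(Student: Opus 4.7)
All three parts are essentially corollaries of Theorem~\ref{theorem:local factors are equal}, to be obtained by specialising the polynomial identity
\[
L_\mfp(A/K, \chi, T) = L_{\phi(\mfp)}(A'/K', \psi(\chi), T)
\]
that it provides, and comparing coefficients via the explicit expansion given in Definition~\ref{definition:twisted_L_series}. Part (1) is asserted as part of the statement of Theorem~\ref{theorem:local factors are equal}, so nothing needs to be proved.

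For part (2), I would apply the theorem to the trivial character $\chi = 1_K$. Since $\psi$ is a group isomorphism it sends the identity to the identity, so $\psi(1_K) = 1_{K'}$; combined with $f_\mfp = f_{\phi(\mfp)}$ from part (1), the polynomial identity becomes
\[
\sum_{i=0}^{2d} \ap_i\, T^{if_\mfp} = \sum_{i=0}^{2d} \aphip_i\, T^{if_\mfp}.
\]
Equating coefficients of each power of $T$ yields $\ap_i = \aphip_i$ for $1 \leq i \leq 2d$.

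For part (3), fix $\chi \in \Hlab[l]$ and $\mfp \in \cS$. Since $\ap_{2d} = p^{df_\mfp} \neq 0$ by the Weil bound, comparing the coefficient of $T^{2df_\mfp}$ in the polynomial identity and using part (2) yields
\[
\chi(\mfp)^{2d} = \psi(\chi)(\phi(\mfp))^{2d}.
\]
Both sides lie in $\mu_l \cup \{0\}$. If $\chi(\mfp) = 0$ this forces $\psi(\chi)(\phi(\mfp)) = 0$ and the two agree; otherwise both are $l$-th roots of unity, and since $l > 2d$ is prime we have $\gcd(2d, l) = 1$, so the $(2d)$-th power map is an automorphism of $\mu_l$ and we recover $\chi(\mfp) = \psi(\chi)(\phi(\mfp))$.

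The only mildly non-routine point — and the closest thing to an obstacle — is this final arithmetic step of inverting the $(2d)$-th power map on $\mu_l$, and it is precisely this step that forces the hypothesis $l > 2d$ of Theorem~\ref{theoremA:abelian varieties are isogenous through L-series}. Everything else is a direct unpacking of the polynomial identity, so I expect no serious difficulty.
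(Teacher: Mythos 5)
Your proposal is correct and follows essentially the same route as the paper: parts (2) and (3) are proved by exactly the same coefficient comparisons (the trivial character and $\psi(1_K)=1_{K'}$ for (2); the leading coefficient $\ap_{2d}=p^{df_\mfp}\neq 0$ together with $\gcd(2d,l)=1$ for (3)). The only cosmetic differences are that the paper rederives the norm-preservation in (1) from the degrees $2df_\mfp = 2d'f_{\phi(\mfp)}$ and Lemma~\ref{lemma: same dimension} rather than reading it off the theorem's statement, and that you treat the ramified case $\chi(\mfp)=0$ explicitly where the paper leaves it implicit.
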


\begin{proof}
\mbox{ }
\begin{enumerate}
\item Let $\mfp \in \cS$. As $A$ has good reduction at $\mfp$, the $L$-series $L_\mfp(A/K, T)$ has degree $2d f_{\mfp}$. Similarly, the degree of $L_{\phi(\mfp)}(A'/K', T)$ equals $2df_{\phi(\mfp)}$. By Lemma~\ref{lemma: same dimension}, $d = d'$, hence $f_{\mfp} = f_{\phi(\mfp)}$, i.e. $\phi$ is norm-preserving.
\item Using the fact that $\mfp$ and $\phi(\mfp)$ have the same norm, by comparing the coefficients of the equality $L_\mfp(A/K, T) = L_{\phi(\mfp)}(A'/K', T)$ we have that $\ap_i = \aphip_i$ for all $\mfp \in \cP_K$ and $1 \leq i \leq 2d$. \label{cor:2}
\item For any character $\chi \in \Hlab[l]$ we have, by inspecting the coefficient of $T^{2d}$ of the equality $L_\mfp(A/K, \chi, T) = L_{\phi(\mfp)}(A'/K', \psi(\chi), T)$, that 
\[
\chi(\mfp)^{2d} \ap_{2d} = \psi(\chi)(\phi(\mfp))^{2d} \aphip_{2d}.
\]
Because $\ap_{2d} = \aphip_{2d} \neq 0$, we find $\chi(\mfp)^{2d} = \psi(\chi)(\phi(\mfp))^{2d}$. As $\chi$ and $\psi(\chi)$ are characters of order $l$ and $(l, 2d) = 1$, it follows that $\chi(\mfp) = \psi(\chi)(\phi(\mfp))$. \qedhere
\end{enumerate}
\end{proof}

For the remainder of the section, fix a rational prime $p \in S$. As we cannot create characters ramified at a general $\mfp$, it is useful to create a connection between characters that are unramified at certain primes, as is done by the following lemma.

\begin{lemma}\label{lem:unram->unram}
Let $\chi \in \Hlab[l]$ be a character unramified at all primes lying over $p$. Then $\psi(\chi)$ is unramified at all primes lying over $p$. 
\end{lemma}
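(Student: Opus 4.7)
The plan is to conclude via a single degree-comparison of the factor at $p$. First I would apply Lemma~\ref{lemma:K=K' iff Lp=Lp'} to upgrade the global equality of $L$-series to the polynomial identity $\Lspc = \Lspcx$, so that it suffices to match the degrees of these two polynomials.

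To compute the degree of $\Lspc$, observe that because $p \in S$, the variety $A$ has good reduction at every $\mfp \mid p$, so the inertia group $I_\mfp$ acts trivially on $V_lA$; consequently the local factor $L_\mfp(A/K, \chi, T)$ has degree $2d f_\mfp$ in $T$ whenever $\chi$ is unramified at $\mfp$, and equals $1$ otherwise. Under the hypothesis that $\chi$ is unramified at every prime of $K$ above $p$, and using that $p$ itself is unramified in $K$, this yields
\[
\deg \Lspc \;=\; \sum_{\mfp \mid p} 2d\, f_\mfp \;=\; 2d\,[K:\Qz].
\]
By Lemma~\ref{lemma: same dimension}, $d = d'$ and $[K:\Qz] = [K':\Qz]$, so this number equals $2d'\,[K':\Qz]$. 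The same general formula applied on the right-hand side gives
\[
\deg \Lspcx \;=\; \sum_{\substack{\mfq \mid p \\ \psi(\chi)(\mfq) \neq 0}} 2d'\, f_\mfq \;\leq\; 2d'\,[K':\Qz],
\]
with equality if and only if $\psi(\chi)$ is unramified at every $\mfq \mid p$. Since the two degrees coincide, equality must hold in the displayed inequality, which is exactly the assertion.

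I do not foresee a substantive obstacle: the argument relies only on the shape of $L_\mfp$ at a prime of good reduction (trivial inertia action on $V_lA$) and on the dimension-and-degree equality from Lemma~\ref{lemma: same dimension}; none of the Grunwald-Wang-based character constructions featured in the rest of the section is needed here.
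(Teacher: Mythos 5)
Your proposal is correct and follows essentially the same route as the paper: both compare the degree of $L_p(A/K,\chi,T)$, which is $2d[K:\Qz]=2d'[K':\Qz]$ since $\chi$ is unramified above $p$ and $p\in S$, with the degree of $L_p(A'/K',\psi(\chi),T)$, which drops by $2d'f_{\mfq}$ for each $\mfq\mid p$ at which $\psi(\chi)$ ramifies, forcing no such $\mfq$ to exist. The only difference is that you spell out the intermediate steps (the reduction to the factor at $p$ via Lemma~\ref{lemma:K=K' iff Lp=Lp'} and the vanishing of inertia invariants at ramified twists) that the paper leaves implicit.
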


\begin{proof}
The degree of the polynomial $L_p(A/K, \chi, T)$ is $2d[K:\Qz] = 2d'[K':\Qz]$.  The degree of $L_p(A'/K', \psi(\chi), T)$ is equal to
\[
2d'[K':\Qz]\; - \sum_{\substack{\mfq \mid p \\ \psi(\chi) \text{ ramified at } \mfq}} 2d' f_{\mfq},
\]
so we have equality only if $\psi(\chi)$ is unramified at all primes lying over $p$.
\end{proof}

We prove Theorem~\ref{theorem:local factors are equal} using induction: for a certain $f \geq 1$, we focus on the coefficient of $T^f$ of the equality $L_p(A/K, \chi, T) = L_p(A'/K', \psi(\chi), T)$, and derive a bijection of primes
\[
\{\mfp \in \cP_{K, p}: f_{\mfp} \mid f, \, \apf \neq 0\}\DistTo \{\mfq \in \cP_{K', p}: f_{\mfq} \mid f, \, \aqf \neq 0\}
\]
after which we show that these bijections are compatible for different $f$.
Any of the factors $L_{\mfp}(A/K, \chi, T)$ has a non-zero coefficient, namely $\ap_{2d}$. Therefore, the inductive argument results in a bijection $\cP_{K, p} \to \cP_{K', p}$.

Before we continue to the induction hypothesis, we begin with a number of convenient definitions.

\begin{definition}
Let $f \geq 1$ be an integer. We define the following sets (with the convention that $\ap_i = 0$ if $i > 2d$): 
\begin{itemize}
\item $\cQ_{K, <f} := \{\mfp \in \cP_{K, p}: \exists 1 \leq i < f/f_\mfp: \ap_i \neq 0\}$;
\item $\cQ_{K, f} := \{\mfp \in \cP_{K, p}: f_{\mfp} \mid f, \, \apf \neq 0\}$;
\item $\cQ_{K, f}^+ := \{\mfp \in \cQ_{K, f}: \apf > 0\}$;
\item $\cQ_{K, f}^- := \{\mfp \in \cQ_{K, f}: \apf < 0\}$.
\end{itemize}
The sets $\cQ_{K', <f}$, $\cQ_{K', f}$, $\cQ_{K', f}^+$ and $\cQ_{K', f}^-$ are defined similarly. 
\end{definition}

\noindent\textbf{Induction hypothesis.} Let $f \geq 1$ be an integer. Assume that there is a bijection $\phi_{<f}: \cQf \to \cQfx$ such that for any $\mfp \in \cQf$ and any $\chi \in \Hlab[l]$ the equality
\[
L_{\mfp, <f}(A/K, \chi, T) = L_{\phi(\mfp), <f}(A'/K', \psi(\chi), T)
\]
holds. Note that this condition is empty for $f = 1$.

\begin{remark}
In the induction step below, we will only construct a bijection of primes $\cQ_{K, f}^+ \to \cQ_{K', f}^+$. The construction of the bijection $\cQ_{K, f}^- \to \cQ_{K', f}^-$ is analogous; one needs to swap the roles of $\cQ_{K, f}^+$ and $\cQ_{K, f}^-$, ``positive'' for ``negative'' and ``maximize'' for ``minimize''.
\end{remark}

\noindent \textbf{Induction step.} In order to extend the induction hypothesis, we create a bijection of primes $\phi: \cQ_{K, f} \to \cQ_{K',f}$ such that 
for any $\mfp \in \cQ_{K ,f}$ we have 
\[
\chi(\mfp)^{f/f_{\mfp}} \apf = \psi(\chi)(\phi(\mfp))^{f/f_{\phi(\mfp)}} \aphipf.
\]
This is done in Lemma~\ref{lemma: there is a phi with properties}. We then show that $\phi$ and $\phi_{<f}$ agree on all primes in $\cQ_{K, f}$ at which both are defined (Lemma~\ref{lemma:phi is consistent}). These two properties combined allow us to conclude that (see Lemma~\ref{lemma:induction extended})
\[
L_{\mfp, \leq f}(A/K, \chi, T) = L_{\phi(\mfp), \leq f}(A'/K', \psi(\chi), T).
\]

In line with the proof of Theorem~\ref{theorem: from L-functions of elliptic curves to prime bijection}, we use a character $\chipos \in \Hlab[l]$ that maximizes a certain function. The next couple of lemmas set up this construction.

\begin{lemma}\label{lemma:equalcoefficients}
For any $\chi \in \Hlab[l]$ we have the equality
\[
\sum_{\mfp \in \cQ_{K, f}} \chi(\mfp)^{f/f_{\mfp}} \apf = 
\sum_{\mfq \in \cQ_{K', f}} \psi(\chi)(\mfq)^{f/f_{\mfq}} \aqf.
\]
\end{lemma}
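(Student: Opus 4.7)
The plan is to extract the identity from the coefficient of $T^f$ in the equality $L_p(A/K,\chi,T) = L_p(A'/K',\psi(\chi),T)$, which itself follows from the $L$-series equality via Lemma~\ref{lemma:K=K' iff Lp=Lp'}. Expanding $L_p(A/K,\chi,T) = \prod_{\mfp \mid p} L_\mfp(A/K,\chi,T)$, the coefficient of $T^f$ is a sum of products $\prod_{\mfp} \chi(\mfp)^{i_\mfp}\ap_{i_\mfp}$ indexed by tuples $(i_\mfp)$ of non-negative integers with $\sum_\mfp i_\mfp f_\mfp = f$.

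I would split these tuples into two disjoint classes. The \emph{pure} configurations, where exactly one $i_\mfp$ is non-zero, force $f_\mfp \mid f$ and $i_\mfp = f/f_\mfp$, and contribute exactly the sum $\sum_{\mfp \in \cQ_{K,f}} \chi(\mfp)^{f/f_\mfp}\apf$ (only primes with $\apf\neq 0$ give a non-zero contribution, and these form $\cQ_{K,f}$). The \emph{mixed} configurations, where at least two $i_\mfp$ are positive, must satisfy $i_\mfp f_\mfp < f$ strictly for every contributing $\mfp$, hence require $1 \leq i_\mfp < f/f_\mfp$ with $\ap_{i_\mfp} \neq 0$, which forces $\mfp \in \cQf$.

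The key observation is that the mixed contribution is exactly $[T^f]\prod_{\mfp \in \cQf} L_{\mfp,<f}(A/K,\chi,T)$: primes outside $\cQf$ contribute the trivial truncation $1$, and in the product of truncations any configuration of degree $f$ automatically has at least two nonzero $i_\mfp$ (since each $L_{\mfp,<f}$ is supported in degrees $<f$). Applying the analogous decomposition on the $K'$ side, the lemma reduces to the identity
\[
[T^f]\prod_{\mfp \in \cQf} L_{\mfp,<f}(A/K,\chi,T) = [T^f]\prod_{\mfq \in \cQfx} L_{\mfq,<f}(A'/K',\psi(\chi),T),
\]
which follows immediately from the induction hypothesis: $\phi_{<f}$ is a bijection $\cQf \to \cQfx$ with $L_{\mfp,<f}(A/K,\chi,T) = L_{\phi_{<f}(\mfp),<f}(A'/K',\psi(\chi),T)$, so the two products coincide polynomial-by-polynomial.

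The only point requiring care is the bookkeeping of the case split, in particular verifying that primes outside $\cQf$ never enter a mixed contribution (because the relevant coefficient $\ap_{i_\mfp}$ vanishes when $i_\mfp f_\mfp < f$) and that the sole contribution of primes in $\cQ_{K,f}\setminus \cQf$ is the pure term $\chi(\mfp)^{f/f_\mfp}\apf$. Once these combinatorial points are verified, subtracting the common mixed term from both sides of $[T^f]L_p(A/K,\chi,T) = [T^f]L_p(A'/K',\psi(\chi),T)$ yields the claim.
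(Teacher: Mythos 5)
Your proposal is correct and follows essentially the same route as the paper: extract the coefficient of $T^f$ from $L_p(A/K,\chi,T)=L_p(A'/K',\psi(\chi),T)$, split it into the ``diagonal'' term $\sum_{\mfp\in\cQ_{K,f}}\chi(\mfp)^{f/f_\mfp}\apf$ plus the term $[T^f]\prod_{\mfp\in\cQf}L_{\mfp,<f}(A/K,\chi,T)$, and cancel the latter against its $K'$-counterpart using the induction hypothesis. Your explicit tuple-by-tuple bookkeeping of pure versus mixed configurations is just a more verbose rendering of the paper's one-line expansion $L_{\mfp,\leq f}=L_{\mfp,<f}+\chi(\mfp)^{f/f_\mfp}\apf T^f$; all the combinatorial checks you flag do go through.
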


\begin{proof}
We inspect the coefficient of $T^f$ in both $L_p(A/K, \chi, T)$ and $L_p(A'/K', \psi(\chi), T)$. Note that
\begin{align*}
[T^f]L_p(A/K, \chi, T) 	&= [T^f] \prod_{\mfp \in \cP_{K, p}} L_\mfp(A/K, \chi, T) \\
						&= [T^f] \prod_{\mfp \in \cP_{K, p}} L_{\mfp, \leq f}(A/K, \chi, T)\\
						&= [T^f] \prod_{\mfp \in \cP_{K, p}} \Bigg(L_{\mfp, < f}(A/K, \chi, T) + \begin{cases} 0 & \text{if } f_\mfp \nmid f \\ \chi(\mfp)^{f/f_{\mfp}}\apf T^f & \text{if } f_{\mfp} \mid f\end{cases} \Bigg)\\
						&= \sum_{\substack{\mfp \in \cP_{K, p}\\ f_{\mfp} \mid f}} \chi(\mfp)^{f/f_{\mfp}} \apf + [T^f] \prod_{\mfp \in \cP_{K, p}} L_{\mfp, < f}(A/K, \chi, T)\\
						&= \sum_{\mfp \in \cQ_{K, f}} \chi(\mfp)^{f/f_{\mfp}} \apf + [T^f] \prod_{\mfp \in \cQf} L_{\mfp, < f}(A/K, \chi, T).
\end{align*}
The second to last equality is obtained by noting that the constant coefficient of $L_\mfp(A/K, \chi, T)$ always equals $1$, whilst the last follows from the fact that $L_{\mfp, < f}(A/K, \chi, T) = 1$ if $\mfp \notin \cQf$, and that $\apf = 0$ if $\mfp \notin \cQ_{K, f}$. Similarly,
\[
[T^f]L_p(A'/K', \psi(\chi), T) = \sum_{\mfq \in \cQ_{K', f}} \chi(\mfq)^{f/f_{\mfq}} \aqf + [T^f] \prod_{\mfq \in \cQfx} L_{\mfq, < f}(A'/K', \psi(\chi), T).
\]
The properties of $\psi$ assert that $L_p(A/K, \chi, T) = L_p(A'/K', \psi(\chi), T)$. By the induction hypothesis, 
\[
\prod_{\mfp \in \cQf} L_{\mfp, < f}(A/K, \chi, T) = \prod_{\mfq \in \cQfx} L_{\mfq, < f}(A'/K', \psi(\chi), T).
\]
We conclude that 
\[
\sum_{\mfp \in \cQ_{K, f}} \chi(\mfp)^{f/f_{\mfp}} \apf = 
\sum_{\mfq \in \cQ_{K', f}} \psi(\chi)(\mfq)^{f/f_{\mfq}} \aqf. \qedhere
\]
\end{proof}

\begin{corollary}\label{corollary:equalcoefficients}
We have the equality
\[
\sumf \apf = \sumfx \aqf.
\]
\end{corollary}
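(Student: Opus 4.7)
The proof is essentially immediate from Lemma~\ref{lemma:equalcoefficients}, so the plan is simply to specialize that lemma to the correct character. Since $\psi \colon \Hlab[l] \to \Hlabx[l]$ is a group isomorphism, it must send the identity element of $\Hlab[l]$ to the identity element of $\Hlabx[l]$; that is, $\psi(1_K) = 1_{K'}$. The trivial character is unramified everywhere and takes the value $1$ on every prime, so for every $\mfp \in \cQ_{K, f}$ and every $\mfq \in \cQ_{K', f}$ we have $1_K(\mfp)^{f/f_\mfp} = 1$ and $1_{K'}(\mfq)^{f/f_\mfq} = 1$. Substituting $\chi = 1_K$ into the identity of Lemma~\ref{lemma:equalcoefficients} therefore collapses both sides to
\[
\sumf \apf = \sumfx \aqf,
\]
which is precisely the claimed equality.

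There is no genuine obstacle here; the only point to verify is that $\psi$ sends the trivial character to the trivial character, which is forced because any group homomorphism preserves the identity. The role of this corollary in the sequel is to act as a baseline balance between the two sides (the ``untwisted'' version of Lemma~\ref{lemma:equalcoefficients}), to be combined with the twisted versions via character manipulations in order to separate primes individually rather than only in sum.
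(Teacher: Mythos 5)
Your proof is correct and matches the paper's own argument exactly: both specialize Lemma~\ref{lemma:equalcoefficients} to $\chi = 1_K$ and use that $\psi(1_K) = 1_{K'}$ because a group isomorphism preserves the identity.
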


\begin{proof}
This follows from Lemma~\ref{lemma:equalcoefficients} by taking $\chi = 1_K$ and noting that $\psi(1_K) = 1_{K'}$.
\end{proof}

\begin{definition}\label{definition:the map c}
Let $c: \Hlab[l] \to \Cz$ be the map given by
\[
\chi \mapsto \sumf \big(1 - \chi(\mfp)^{f/f_\mfp}\big) \apf.
\]
Similarly, define $c': \Hlabx[l] \to \Cz$ by
\[
\chi' \mapsto \sumfx \big(1 - \chi'(\mfq)^{f/f_\mfq}\big) \aqf.
\]
\end{definition}

\begin{lemma}\label{lemma:c is equal}
For any $\chi \in \Hlab[l]$ we have the equality
\[
c(\chi) = c'(\psi(\chi)).
\]
\end{lemma}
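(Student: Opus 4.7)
The plan is to observe that $c(\chi)$ naturally splits into two pieces by expanding the factor $(1 - \chi(\mfp)^{f/f_\mfp})$, and that each piece has already been identified with its $K'$-counterpart in the lemmas immediately preceding. Concretely, I would write
\begin{align*}
c(\chi) &= \sumf \apf \;-\; \sumf \chi(\mfp)^{f/f_\mfp}\apf,\\
c'(\psi(\chi)) &= \sumfx \aqf \;-\; \sumfx \psi(\chi)(\mfq)^{f/f_\mfq}\aqf.
\end{align*}

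The first summands agree by Corollary~\ref{corollary:equalcoefficients}, which asserts precisely $\sumf \apf = \sumfx \aqf$. The second summands agree by Lemma~\ref{lemma:equalcoefficients} applied to the given character $\chi$, which gives $\sumf \chi(\mfp)^{f/f_\mfp}\apf = \sumfx \psi(\chi)(\mfq)^{f/f_\mfq}\aqf$. Subtracting the two equalities yields $c(\chi) = c'(\psi(\chi))$.

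There is no real obstacle here: this is a bookkeeping step that packages the content of Lemma~\ref{lemma:equalcoefficients} and Corollary~\ref{corollary:equalcoefficients} into the single function $c$ that will then be maximised in the subsequent argument (mirroring the role of $\chipos$ in Definition~\ref{definition:chipos order 2} in the quadratic case). The only point worth flagging is that the proof uses the induction hypothesis only indirectly, through Lemma~\ref{lemma:equalcoefficients}, so no further assumption on $f$ is needed at this stage.
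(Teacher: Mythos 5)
Your proof is correct and is exactly the paper's argument: the paper's one-line proof says to subtract the identities of Lemma~\ref{lemma:equalcoefficients} and Corollary~\ref{corollary:equalcoefficients}, which is precisely the decomposition you spell out.
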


\begin{proof}
Subtract the equalities from Lemma~\ref{lemma:equalcoefficients} and Corollary~\ref{corollary:equalcoefficients}.
\end{proof}

The equality of Lemma~\ref{lemma:c is equal} allows us to derive properties of $\psi(\chi)$ for characters at which the real part of $c$ is maximal (over all characters of order $l$), which will be made precise in Lemmas~\ref{lemma: property of maximiser} and~\ref{lemma:properties psichipos}.

\begin{definition}
Define the following $l^\text{th}$ roots of unity:
\begin{align*}
Z &:= \exp(2 \pi i / l);\\
\zeta &:=  \exp(\pi i (l-1) / l).
\end{align*}
Moreover, let 
\begin{align*}
P &:= \text{Re}(Z);\\
\rho &:=  \text{Re}(\zeta).
\end{align*}
\end{definition}

\begin{remark}\label{remark: properties of Zz}
The values $Z$ and $\zeta$ have the following properties.
\begin{enumerate}
\item $Z$ has the largest real part of all $l^{\text{th}}$ roots of unity other than $1$ (along with $Z^{-1}$). \label{enum: Z max}
\item $\zeta$ has the smallest real part of all $l^{\text{th}}$ roots of unity (along with $\zeta^{-1}$). \label{enum: z min}
\item The equality $\zeta Z = \zeta^{-1}$ holds. \label{enum: zZ = z^-1}
\end{enumerate}
\end{remark}

\begin{lemma}\label{lemma: property of maximiser}
Any $\chi \in \Hlab[l]$ that maximises the real part of $c$ has the following properties for any $\mfp \in \cQ_{K, f}$:
\begin{itemize}
\item If $\mfp \in \cQ_{K, f}^+$, then $\chi(\mfp)^{f/f_{\mfp}} = \zeta^{\pm 1}$.
\item If $\mfp \in \cQ_{K, f}^-$, then $\chi(\mfp)^{f/f_{\mfp}} = 1$.
\end{itemize}
Conversely, any character with these properties maximises the real part of $c$. The same holds if we replace $K$ with $K'$, $c$ with $c'$ and $\mfp$ with a prime $\mfq$ of $K'$. 
\end{lemma}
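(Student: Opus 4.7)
The plan is to reduce this global maximization problem to independent per-prime minimization. Since $\chi(\mfp)^{f/f_\mfp}$ is either zero or an $l$-th root of unity, its real part is well defined, and we may rewrite
\[
\text{Re}(c(\chi)) = \sum_{\mfp \in \cQ_{K, f}} \apf \;-\; \sum_{\mfp \in \cQ_{K, f}} \apf \cdot \text{Re}\big(\chi(\mfp)^{f/f_\mfp}\big).
\]
The first sum is independent of $\chi$, so maximizing $\text{Re}(c(\chi))$ is equivalent to minimizing the second sum.

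The crucial observation is that the values $\chi(\mfp)^{f/f_\mfp}$ for $\mfp \in \cQ_{K, f}$ can be prescribed independently in $\mu_l$. Indeed, $\mfp \in \cQ_{K, f}$ forces $1 \le f/f_\mfp \le 2d$, and the hypothesis $l > 2d$ (with $l$ prime) gives $\gcd(f/f_\mfp, l) = 1$, so the exponentiation map $\xi \mapsto \xi^{f/f_\mfp}$ is a bijection on $\mu_l$. By Lemma~\ref{lemma:grunwald-wang in any order}, for any prescription of $l$-th roots of unity at the finitely many primes of $\cQ_{K, f}$, there is a character $\chi \in \Hlab[l]$ realizing it. Combining, the tuple $\big(\chi(\mfp)^{f/f_\mfp}\big)_{\mfp \in \cQ_{K, f}}$ can be made equal to any element of $\mu_l^{|\cQ_{K, f}|}$.

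The problem therefore reduces, independently at each $\mfp$, to minimizing $\apf \cdot \text{Re}(\xi)$ over $\xi \in \mu_l$ (one also notes that allowing $\chi$ to be ramified at $\mfp$, i.e.\ $\xi = 0$, cannot beat the optimum over $\mu_l$, since for $l \ge 3$ we have $\rho < 0 < 1$). By Remark~\ref{remark: properties of Zz}, on $\mu_l$ the real part attains its maximum $1$ only at $\xi = 1$, and its minimum $\rho$ only at $\xi = \zeta^{\pm 1}$. Hence if $\mfp \in \cQ_{K, f}^+$ the minimum occurs exactly at $\chi(\mfp)^{f/f_\mfp} = \zeta^{\pm 1}$, and if $\mfp \in \cQ_{K, f}^-$ the minimum occurs exactly at $\chi(\mfp)^{f/f_\mfp} = 1$. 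This simultaneously establishes both directions of the lemma: a maximizer must satisfy the stated local conditions at every $\mfp \in \cQ_{K, f}$, and conversely any $\chi$ satisfying them realizes the global maximum. The same argument transfers verbatim to $K'$, $c'$, and primes $\mfq$.

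The main obstacle lies entirely in the independent-prescription step: without Grunwald-Wang the local maximization would not combine into a global maximizer, and without the coprimality $\gcd(f/f_\mfp, l) = 1$ afforded by $l > 2d$ we could not prescribe $\chi(\mfp)^{f/f_\mfp}$ itself (only $\chi(\mfp)$, which is a weaker handle). Once these two ingredients are in hand, the optimization is an immediate application of Remark~\ref{remark: properties of Zz}.
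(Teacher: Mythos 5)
Your proposal is correct and follows essentially the same route as the paper: both arguments optimize termwise over the primes in $\cQ_{K,f}$, use the coprimality of $f/f_{\mfp}$ with $l$ together with Lemma~\ref{lemma:grunwald-wang in any order} to see that the local optima can be realized simultaneously, and then read off the extremal roots of unity from Remark~\ref{remark: properties of Zz}. Your explicit check that a ramified value $\chi(\mfp)=0$ cannot beat the optimum over $\mu_l$ is a small point the paper leaves implicit, but it does not change the argument.
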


\begin{proof}
For any $\mfp \in \cQ_{K, f}$ we have $f/f_{\mfp} \leq 2d$. As a result, $l$ and $f/f_{\mfp}$ are coprime, hence $\zeta^{\pm 1}$ is an ${f/f_{\mfp}}^\text{th}$ power of an $l^\text{th}$ root of unity. By Lemma~\ref{lemma:grunwald-wang in any order}, there exist characters with the properties stated in the lemma. Furthermore, each individual term of the sum
\[
\text{Re}\big(c(\chi)\big) = \sumf \Big(1 - \text{Re}\big(\chi(\mfp)^{f/f_{\mfp}}\big) \Big) \apf
\]
has upper bound $0$ if $\mfp \in \cQ_{L, f}^-$ and upper bound $(1 - \rho) \apf$ if $\mfp \in \cQ_{K, f}^+$. Any character with the aforementioned properties meets these upper bounds, hence maximises the real part of $c$. Conversely, the upper bound for the term at $\mfp$ is met exactly when 
\[
\text{Re}\big(\chi(\mfp)^{f/f_{\mfp}}\big) = 
\begin{cases}
\rho & \text{ if } \mfp \in \cQ_{K, f}^+;\\
1 & \text{ if } \mfp \in \cQ_{K, f}^-.
\end{cases}
\]
The only two $l^{\text{th}}$ roots of unity with real part equal to $\rho$ are $\zeta$ and $\zeta^{-1}$, hence any character that maximises the real part of $c$ must have the stated properties.
\end{proof}


\begin{definition}\label{definition:chipos}
Define $\chipos \in \Hlab[l]$ to be any character with the following properties.
\begin{enumerate}
\item If $\mfp \in \cQ_{K, f}^+$, then $\chipos(\mfp)^{f/f_{\mfp}} = \zeta$.
\item If $\mfp \in \cQ_{K, f}^-$, then $\chipos(\mfp) = 1$.
\end{enumerate}
Such a character exists by Lemma~\ref{lemma:grunwald-wang in any order}. Note that it maximises the real part of $c$ over all $\chi \in \Hlab[l]$. 
\end{definition}

\begin{lemma}\label{lemma:properties psichipos}
The character $\psi(\chipos)$ is unramified at all primes of $K'$ lying over $p$ and furthermore it maximises the real part of $c'$. 
\end{lemma}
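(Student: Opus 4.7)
The plan is to establish the two assertions of the lemma separately, handling the maximisation claim first since it is essentially immediate, and then deriving the unramification claim by exploiting the freedom available in the definition of $\chipos$.

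For the maximisation claim, I would simply invoke Lemma~\ref{lemma:c is equal}, which gives $c(\chi) = c'(\psi(\chi))$ for every $\chi \in \Hlab[l]$. Taking real parts and using the fact that $\psi$ is a bijection $\Hlab[l] \DistTo \Hlabx[l]$, the image $\psi(\chipos)$ must attain the maximum of $\text{Re}(c')$ over $\Hlabx[l]$, because by Definition~\ref{definition:chipos} together with Lemma~\ref{lemma: property of maximiser} the character $\chipos$ attains the maximum of $\text{Re}(c)$ over $\Hlab[l]$.

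For the unramification claim, the strategy is to pin down $\chipos$ more precisely. Definition~\ref{definition:chipos} prescribes $\chipos$ only on the finite set $\cQ_{K, f}$, and inspection of the sum defining $c$ shows that the values of a character at primes in $\cP_{K, p} \setminus \cQ_{K, f}$ do not enter into $c(\chipos)$ at all. Therefore by Lemma~\ref{lemma:grunwald-wang in any order} I would refine the choice of $\chipos$ so that additionally $\chipos(\tilde{\mfp}) = 1$ for every $\tilde{\mfp} \in \cP_{K, p} \setminus \cQ_{K, f}$; this refinement preserves both the defining conditions of Definition~\ref{definition:chipos} and the maximisation property. The resulting $\chipos$ is unramified at every prime of $K$ lying over $p$, so Lemma~\ref{lem:unram->unram} immediately yields that $\psi(\chipos)$ is unramified at every prime of $K'$ lying over $p$.

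The only subtle point, which is the main thing to verify carefully, is the compatibility of the two conclusions with a single choice of $\chipos$. Lemma~\ref{lemma: property of maximiser} characterises maximisers purely via their values on $\cQ_{K, f}$, so constraining $\chipos$ to be trivial (hence unramified) at the remaining primes above $p$ neither conflicts with nor weakens the maximisation condition; both parts of the lemma therefore hold simultaneously for this refined $\chipos$, and the argument is complete.
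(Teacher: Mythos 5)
Your proposal is correct and follows essentially the same route as the paper: the maximisation claim is exactly the paper's argument via Lemma~\ref{lemma:c is equal} and the bijectivity of $\psi$, and the unramification claim is obtained from Lemma~\ref{lem:unram->unram}. Your explicit refinement of $\chipos$ to take the value $1$ at the primes of $\cP_{K,p}\setminus\cQ_{K,f}$ is a welcome touch of care --- the paper applies Lemma~\ref{lem:unram->unram} directly, leaving implicit that the Grunwald--Wang character of Definition~\ref{definition:chipos} is chosen unramified at all primes above $p$.
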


\begin{proof}
By Lemma~\ref{lem:unram->unram}, $\psi(\chipos)$ is unramified at all primes lying over $p$. 
Suppose the character $\chi' \in \Hlabx[l]$ maximises the real part of $c'$. By definition $\chipos$ maximises the real part of $c$, hence
\[
c(\chipos) \geq c(\psi^{-1}(\chi')).
\]
By Lemma~\ref{lemma:c is equal}, this is equivalent to
\[
c'(\psi(\chipos)) \geq c'(\chi').
\]
By assumption on $\chi'$, $\psi(\chipos)$ maximises the real part of $c'$. 
\end{proof}

\begin{corollary}\label{corollary:values_of_psichi}
The character $\psi(\chipos)$ has the following properties.
\begin{itemize}
\item For any $\mfq \in \cQ_{K', f}^+$ we have $\psi(\chipos)(\mfq)^{f/f_\mfq} = \zeta^{\pm 1}$.
\item For any $\mfq \in \cQ_{K', f}^-$ we have $\psi(\chipos)(\mfq)^{f/f_\mfq} = 1$.
\end{itemize}
\end{corollary}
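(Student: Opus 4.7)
The plan is to derive Corollary~\ref{corollary:values_of_psichi} as an immediate consequence of the two preceding results. By Lemma~\ref{lemma:properties psichipos}, the character $\psi(\chipos) \in \Hlabx[l]$ maximises the real part of $c'$. Lemma~\ref{lemma: property of maximiser} explicitly includes its $K'$-counterpart: any character $\chi' \in \Hlabx[l]$ that maximises $\text{Re}(c')$ satisfies $\chi'(\mfq)^{f/f_{\mfq}} = \zeta^{\pm 1}$ for every $\mfq \in \cQ_{K', f}^+$ and $\chi'(\mfq)^{f/f_\mfq} = 1$ for every $\mfq \in \cQ_{K', f}^-$.

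Therefore the proof reduces to simply invoking these two lemmas in succession: first pass from $\chipos$ to $\psi(\chipos)$ using Lemma~\ref{lemma:properties psichipos} to transfer the maximising property from $c$ to $c'$, then apply the characterisation of maximisers from Lemma~\ref{lemma: property of maximiser} on the $K'$ side to read off the stated values of $\psi(\chipos)(\mfq)^{f/f_\mfq}$.

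There is essentially no obstacle here; the corollary is a direct unpacking of the results that have already been established, whose content has been arranged precisely so that this final step is automatic. The one small thing worth noting in the write-up is that although $\psi(\chipos)$ is a very specific character (not an arbitrary maximiser), it still falls under the scope of Lemma~\ref{lemma: property of maximiser}, which characterises \emph{all} maximisers of $\text{Re}(c')$ in $\Hlabx[l]$; this is why the $\pm 1$ ambiguity appears in the $\cQ_{K', f}^+$ case, as opposed to the specific value $\zeta$ prescribed for $\chipos$ on $\cQ_{K, f}^+$ in Definition~\ref{definition:chipos}.
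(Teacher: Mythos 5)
Your proposal is correct and follows exactly the paper's own proof: first invoke Lemma~\ref{lemma:properties psichipos} to see that $\psi(\chipos)$ maximises the real part of $c'$, then apply the $K'$-version of Lemma~\ref{lemma: property of maximiser} to read off the stated values. The remark about why only $\zeta^{\pm 1}$ (rather than $\zeta$) can be asserted on $\cQ_{K', f}^+$ is a sensible clarification consistent with the paper.
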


\begin{proof}
This is a consequence of Lemma~\ref{lemma: property of maximiser} and Lemma~\ref{lemma:properties psichipos}.
\end{proof}

\begin{definition}\label{definition: chimfp}
For any prime $\mfp \in \cQ_{K, f}^+$ fix a character $\chi_\mfp$ such that for any prime $\tilde{\mfp} \in \cQ_{K, f}$ we have
\[
\chi_\mfp(\tilde{\mfp})^{f/f_{\tilde{\mfp}}} = \begin{cases} 
Z & \text{if } \tilde{\mfp} = \mfp;\\
1 & \text{otherwise.}
\end{cases}
\]
The existence of $\chi_\mfp$ is guaranteed by Lemma~\ref{lemma:grunwald-wang in any order}.
\end{definition}

\begin{lemma}\label{lemma:psichi has value 1 on negative primes}
Let $\mfq \in \cQ_{K', f}^-$. Then $\psi(\chi_{\mfp})(\mfq)^{f/f_{\mfq}} = 1$. 
\end{lemma}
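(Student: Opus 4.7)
The plan is to observe that multiplying $\chipos$ by $\chi_\mfp$ yields another maximiser of the real part of $c$, transfer that maximality to $K'$ via $\psi$, and then divide out $\psi(\chipos)$ using Corollary~\ref{corollary:values_of_psichi}.

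First I would compute the values of $\chipos \chi_\mfp$ on each prime of $\cQ_{K, f}$. Using Definitions~\ref{definition:chipos} and~\ref{definition: chimfp} together with the identity $\zeta Z = \zeta^{-1}$ from Remark~\ref{remark: properties of Zz}(\ref{enum: zZ = z^-1}), one finds that for $\tilde{\mfp} \in \cQ_{K, f}^-$ the value is $1$, at $\tilde{\mfp} = \mfp$ the value is $\zeta \cdot Z = \zeta^{-1}$, and at every other $\tilde{\mfp} \in \cQ_{K, f}^+$ the value is $\zeta$. By the converse direction of Lemma~\ref{lemma: property of maximiser}, $\chipos \chi_\mfp$ therefore maximises the real part of $c$.

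Next I would use Lemma~\ref{lemma:c is equal} to transport this property across $\psi$. For any $\chi' \in \Hlabx[l]$ the equality $c(\psi^{-1}(\chi')) = c'(\chi')$, combined with the maximality of $\chipos \chi_\mfp$ applied to $\psi^{-1}(\chi')$, yields
\[
\text{Re}\bigl(c'(\psi(\chipos \chi_\mfp))\bigr) = \text{Re}\bigl(c(\chipos \chi_\mfp)\bigr) \geq \text{Re}\bigl(c(\psi^{-1}(\chi'))\bigr) = \text{Re}(c'(\chi')).
\]
Thus $\psi(\chipos)\psi(\chi_\mfp) = \psi(\chipos \chi_\mfp)$ maximises the real part of $c'$, and the forward direction of Lemma~\ref{lemma: property of maximiser} (applied to $K'$ and $c'$) forces
\[
\bigl(\psi(\chipos)(\mfq)\,\psi(\chi_\mfp)(\mfq)\bigr)^{f/f_\mfq} = 1
\]
for every $\mfq \in \cQ_{K', f}^-$.

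Finally, Corollary~\ref{corollary:values_of_psichi} gives $\psi(\chipos)(\mfq)^{f/f_\mfq} = 1$ on $\cQ_{K', f}^-$, so dividing the two relations yields $\psi(\chi_\mfp)(\mfq)^{f/f_\mfq} = 1$, as required. There is no real obstacle; the only subtlety is the bookkeeping with the root $\zeta^{-1}$, which is still permitted by Lemma~\ref{lemma: property of maximiser} since both $\zeta$ and $\zeta^{-1}$ attain the minimal real part $\rho$ among $l^{\text{th}}$ roots of unity.
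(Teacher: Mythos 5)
Your proposal is correct and follows essentially the same route as the paper: compute the values of $\chipos\chi_\mfp$ using $\zeta Z = \zeta^{-1}$, invoke Lemma~\ref{lemma: property of maximiser} in both directions together with Lemma~\ref{lemma:c is equal} to see that $\psi(\chipos\chi_\mfp)$ maximises $\textup{Re}(c')$, and then divide out $\psi(\chipos)$ via Corollary~\ref{corollary:values_of_psichi}. No issues.
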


\begin{proof}
For any prime $\tilde{\mfp} \in \cQ_{K, f}$ we can calculate the value of $\chi_\mfp \chipos$ at $\tilde{\mfp}$:
\[
(\chi_\mfp \chipos)(\tilde{\mfp})^{f/f_{\tilde{\mfp}}} = \begin{cases} 
\zeta^{-1} & \text{if } \tilde{\mfp} = \mfp;\\
\zeta & \text{if } \tilde{\mfp} \neq \mfp \text{ and } \tilde{\mfp} \in \cQ_{K, f}^+;\\
1 & \text{if } \tilde{\mfp} \in \cQ_{K, f}^-.
\end{cases}
\]
By Lemma~\ref{lemma: property of maximiser} $\chi_\mfp \chipos$ maximises the real part of $c$; hence $\psi(\chi_\mfp \chipos) = \psi(\chi_\mfp) \psi(\chipos)$ maximises the real part of $c'$. Therefore
\[
\psi(\chi_\mfp \chipos)(\mfq)^{f/f_\mfq} = \begin{cases} 
\zeta^{\pm 1} & \text{if } \aqf > 0;\\
1 & \text{if } \aqf < 0.
\end{cases}
\]
We already know the value of $\psi(\chipos)(\mfq)^{f/f_\mfq}$ from Corollary~\ref{corollary:values_of_psichi}, hence it is immediate that $\psi(\chi_\mfp)(\mfq)^{f/f_\mfq} = 1$ if $\aqf < 0$.
\end{proof}

\begin{lemma}\label{lemma: there is a phi with properties}
There exists a bijection $\phi: \cQ_{K, f}^+ \to \cQ_{K', f}^+$ such that for any $\mfp \in \cQ_{K, f}$ we have $\apf = \aphipf$ and any $\chi \in \Hlab[l]$ we have
\[
\chi(\mfp)^{f/f_{\mfp}} = \psi(\chi)(\phi(\mfp))^{f/f_{\phi(\mfp)}}.
\]
\end{lemma}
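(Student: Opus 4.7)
The plan is to use the distinguished characters $\chi_\mfp$ from Definition~\ref{definition: chimfp} together with the $\Qz$-linear independence of $\{Z^k - 1 : 1 \le k \le l-1\}$ in $\Qz(Z)$ (which holds because $l$ is prime). Fixing $\mfp \in \cQ_{K, f}^+$, I apply Lemma~\ref{lemma:equalcoefficients} to $\chi_\mfp$, subtract Corollary~\ref{corollary:equalcoefficients}, and discard the $\cQ_{K', f}^-$ contribution via Lemma~\ref{lemma:psichi has value 1 on negative primes} to arrive at
\[
(Z - 1)\,\apf \;=\; \sum_{\mfq \in \cQ_{K', f}^+} \big(\psi(\chi_\mfp)(\mfq)^{f/f_\mfq} - 1\big)\,\aqf.
\]
Since each $\psi(\chi_\mfp)(\mfq)^{f/f_\mfq}$ is an $l$th root of unity and the $\aqf$ are integers, grouping the right-hand side by the exponent of $Z$ and invoking the linear-independence statement forces $\psi(\chi_\mfp)(\mfq)^{f/f_\mfq} \in \{1, Z\}$ for every $\mfq \in \cQ_{K', f}^+$, and the set $\Sigma(\mfp) := \{\mfq \in \cQ_{K', f}^+ : \psi(\chi_\mfp)(\mfq)^{f/f_\mfq} = Z\}$ satisfies $\sum_{\mfq \in \Sigma(\mfp)} \aqf = \apf > 0$, hence $\Sigma(\mfp) \ne \emptyset$.

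For distinct $\mfp, \mfp' \in \cQ_{K, f}^+$, repeating the computation with $\chi_\mfp \chi_{\mfp'}^{-1}$ (which has value $Z$ at $\mfp$, $Z^{-1}$ at $\mfp'$, and $1$ elsewhere on $\cQ_{K, f}$) produces a two-term relation in $\Qz(Z)$; its linear independence together with positivity of $\aqf$ on $\cQ_{K', f}^+$ yields $\Sigma(\mfp) \cap \Sigma(\mfp') = \emptyset$. Running the whole construction with the roles of $K$ and $K'$ swapped gives pairwise disjoint non-empty subsets of $\cQ_{K, f}^+$ indexed by $\cQ_{K', f}^+$, and the two cardinality inequalities that result force each $\Sigma(\mfp)$ to be a singleton $\{\phi(\mfp)\}$. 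This defines a bijection $\phi: \cQ_{K, f}^+ \DistTo \cQ_{K', f}^+$; the identity $\apf = \aphipf$ then follows from the one-term sum.

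It remains to verify $\chi(\mfp)^{f/f_\mfp} = \psi(\chi)(\phi(\mfp))^{f/f_{\phi(\mfp)}}$ for every $\chi \in \Hlab[l]$. Writing $\chi = \chi_0 \prod_{\tilde\mfp \in \cQ_{K, f}^+} \chi_{\tilde\mfp}^{a_{\tilde\mfp}}$ with $\chi_0$ trivial on $\cQ_{K, f}^+$, the identity for the $\chi_{\tilde\mfp}$-product is immediate from the construction, so the task reduces to showing that $\psi$ sends the kernel of evaluation on $\cQ_{K, f}^+$ into the kernel of evaluation on $\cQ_{K', f}^+$. The plan is to subtract Lemma~\ref{lemma:equalcoefficients} applied to $\chi_0 \eta$ from the same applied to $\eta$, then run the analogous bijection construction on $\cQ_{K, f}^-$ so that the left-hand side vanishes when $\eta$ is also made trivial on $\cQ_{K, f}^-$; by Grunwald-Wang applied in $K'$ and bijectivity of $\psi$, the tuple $(\psi(\eta)(\mfq)^{f/f_\mfq})_\mfq$ then realises arbitrary values in $\mu_l^{\cQ_{K', f}^+}$, which pins down each coefficient of $(\psi(\chi_0)(\mfq)^{f/f_\mfq} - 1)\aqf$ individually. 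The main obstacle is precisely this final step: the $\chi_\mfp$ alone do not span $\Hlab[l]$, so one must couple the positive and negative parts of $\cQ_{K, f}$ through the full symmetry of the setup in order to decouple the two summations produced by subtracting the two instances of Lemma~\ref{lemma:equalcoefficients} and close the argument.
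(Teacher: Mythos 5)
Your construction of $\phi$ on $\cQ_{K,f}^+$ is correct and takes a genuinely different route from the paper. Where you expand $(Z-1)\apf=\sum_{\mfq}(\psi(\chi_\mfp)(\mfq)^{f/f_\mfq}-1)\aqf$, group by powers of $Z$, and invoke the $\Qz$-linear independence of $\{Z^k-1:1\le k\le l-1\}$ in $\Qz(Z)$ (valid since $[\Qz(Z):\Qz]=l-1$ and $1+Z+\dots+Z^{l-1}=0$) together with strict positivity of $\aqf$ on $\cQ_{K',f}^+$, the paper instead orders the primes by the size of $\apf$, works with the maximiser $\chipos$ of $\Real(c)$, pins down $\psi(\chi_{\mfp_i})$ by an extremal estimate forcing $t=1$, and identifies the value $Z$ via a circle--line intersection argument inside an induction. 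Your disjointness step via $\chi_\mfp\chi_{\mfp'}^{-1}$ and the two cardinality inequalities replace that induction. This buys a shorter, purely algebraic argument that avoids the ordering/WLOG bookkeeping and the character $\chipos$ entirely; note only that you still need Lemma~\ref{lem:unram->unram} and Lemma~\ref{lemma:psichi has value 1 on negative primes} exactly as the paper does, so the saving is in the combinatorics, not in the input.

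The second half --- the identity $\chi(\mfp)^{f/f_\mfp}=\psi(\chi)(\phi(\mfp))^{f/f_{\phi(\mfp)}}$ for arbitrary $\chi$ --- has a genuine gap, which you flag yourself. Two concrete problems. First, the claim that the left-hand side of the subtracted identity ``vanishes when $\eta$ is also made trivial on $\cQ_{K,f}^-$'' is false as stated: that side equals $\sum_{\tilde\mfp\in\cQ_{K,f}^-}\big(\chi_0(\tilde\mfp)^{f/f_{\tilde\mfp}}-1\big)\eta(\tilde\mfp)^{f/f_{\tilde\mfp}}\atpf$, which does not vanish unless $\chi_0$ itself is trivial on $\cQ_{K,f}^-$. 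To kill it you must first run the entire construction on the negative primes and strip $\chi_0$ of its $\cQ_{K,f}^-$-components using the negative analogues of the $\chi_{\tilde\mfp}$; only then does the varying-$\eta$/Grunwald--Wang argument decouple the $K'$-side coefficients. That repair works, but it is not carried out in the proposal. Second, the detour is unnecessary: once you know that $\psi(\chi_\mfp)(\mfq)^{f/f_\mfq}$ equals $Z$ at $\mfq=\phi(\mfp)$ and $1$ at every other $\mfq\in\cQ_{K',f}$, positive or negative (Lemma~\ref{lemma:psichi has value 1 on negative primes} plus your singleton result), the differences $c(\chi\chi_\mfp)-c(\chi)=(1-Z)\chi(\mfp)^{f/f_\mfp}\apf$ and $c'(\psi(\chi)\psi(\chi_\mfp))-c'(\psi(\chi))=(1-Z)\psi(\chi)(\phi(\mfp))^{f/f_{\phi(\mfp)}}\aphipf$ each localise at a single prime for \emph{arbitrary} $\chi$, and Lemma~\ref{lemma:c is equal} finishes in one line. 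This is the paper's argument, and it sidesteps the coupling of the positive and negative parts that you identify as the main obstacle.
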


The proof of this lemma consists of multiple parts: first we prove that $\psi(\chi_{\mfp})$ has value $1$ on all primes $\mfq \in \cQ_{K', f}$ except one, using an inductive argument. From this we derive a map $\cQ_{K, f} \to \cQ_{K', f}$  and show that it has the required properties. 

\begin{lemma}\label{lemma: there is a prime map}
For any $\mfp \in \cQ_{K, f}^+$ there exists a prime $\phi(\mfp) \in \cQ_{K', f}^+$ such that 
\[
\apf = \aphipf.
\]
Moreover, for any $\chi \in \Hlab[l]$ we have $\chi(\mfp)^{f/f_{\mfp}} = \psi(\chi)(\phi(\mfp))^{f/f_{\phi(\mfp)}}$.
\end{lemma}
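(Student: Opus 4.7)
The plan is to determine $\psi(\chi_\mfp)$ prime by prime on $\cQ_{K',f}$ and then use a counting argument to show that its ``support'' is a single prime $\phi(\mfp)$. Write $\alpha_\mfq := \psi(\chi_\mfp)(\mfq)^{f/f_\mfq}$ for $\mfq \in \cQ_{K', f}$; Lemma~\ref{lemma:psichi has value 1 on negative primes} gives $\alpha_\mfq = 1$ on $\cQ_{K', f}^-$. Using $Z\zeta = \zeta^{-1}$ from Remark~\ref{remark: properties of Zz}, the character $\chi_\mfp \chipos$ takes value $\zeta^{-1}$ at $\mfp$, $\zeta$ on the other primes of $\cQ_{K, f}^+$, and $1$ on $\cQ_{K, f}^-$, so by Lemma~\ref{lemma: property of maximiser} it maximises the real part of $c$; hence $\psi(\chi_\mfp)\psi(\chipos)$ maximises the real part of $c'$, and combining this with Corollary~\ref{corollary:values_of_psichi} forces $\alpha_\mfq \in \{1, Z, Z^{-1}\}$ on $\cQ_{K',f}^+$. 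Substituting $\chi = \chi_\mfp$ into Lemma~\ref{lemma:equalcoefficients} and subtracting Corollary~\ref{corollary:equalcoefficients} yields
\[
(Z-1)\apf \;=\; \sum_{\mfq \in \cQ_{K',f}^+}(\alpha_\mfq - 1)\aqf.
\]
Comparing real and imaginary parts and using $\aqf > 0$ rules out the value $Z^{-1}$: setting $S^+(\mfp) := \{\mfq \in \cQ_{K',f}^+ : \alpha_\mfq = Z\}$ one gets $\apf = \sum_{\mfq \in S^+(\mfp)}\aqf$.

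The next step is disjointness. Applying the same subtraction trick to $\chi_\mfp \chi_{\mfp'}^{-1}$ for $\mfp' \in \cQ_{K, f}^+ \setminus \{\mfp\}$, all terms outside $\{\mfp, \mfp'\}$ cancel and the mass on $S^+(\mfp) \cap S^+(\mfp')$ carries a coefficient $Z + Z^{-1} - 2 \neq 0$, so the intersection is empty. To promote disjointness to a bijection I would plug $\chipos$ into Lemma~\ref{lemma:equalcoefficients}, take real parts (noting that $\text{Re}(\zeta) = \text{Re}(\zeta^{-1}) = \rho$) and combine with Corollary~\ref{corollary:equalcoefficients}; since $\rho \neq 1$, this yields $\sum_{\mfp \in \cQ_{K,f}^+}\apf = \sum_{\mfq \in \cQ_{K',f}^+}\aqf$. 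Together with the local identities and disjointness this forces $\bigsqcup_{\mfp \in \cQ_{K,f}^+} S^+(\mfp) = \cQ_{K',f}^+$, exhibiting $\cQ_{K',f}^+$ as a partition into $|\cQ_{K,f}^+|$ non-empty blocks. Running the whole construction with $\psi^{-1}$ produces the opposite partition, whence $|\cQ_{K,f}^+| = |\cQ_{K',f}^+|$ and every $S^+(\mfp)$ is a singleton. Let $\phi(\mfp)$ be its unique element; then $\apf = \aphipf$.

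The main obstacle is extending the character identity $\chi(\mfp)^{f/f_\mfp} = \psi(\chi)(\phi(\mfp))^{f/f_{\phi(\mfp)}}$ from $\chi_\mfp$ to an arbitrary $\chi \in \Hlab[l]$. My plan is to apply Lemma~\ref{lemma:equalcoefficients} to both $\chi$ and $\chi_\mfp \chi$ and subtract. Because $\chi_\mfp$ is unramified on $\cQ_{K,f}$ with value $1$ off $\mfp$, every term over $\mfp' \neq \mfp$ cancels in both the unramified and ramified cases for $\chi$, since multiplication by an unramified character preserves the ramification behaviour at $\mfp'$. An analogous collapse occurs on the right, using $\psi(\chi_\mfp) = 1$ on $\cQ_{K', f} \setminus \{\phi(\mfp)\}$ and $= Z$ at $\phi(\mfp)$, leaving
\[
(Z-1)\,\chi(\mfp)^{f/f_\mfp}\,\apf \;=\; (Z-1)\,\psi(\chi)(\phi(\mfp))^{f/f_{\phi(\mfp)}}\,\aphipf,
\]
with the usual convention that ramified primes contribute $0$. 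Dividing by $(Z-1)\apf = (Z-1)\aphipf \neq 0$ produces the desired identity, and the balanced vanishing of the two sides shows in addition that $\chi$ ramifies at $\mfp$ if and only if $\psi(\chi)$ ramifies at $\phi(\mfp)$.
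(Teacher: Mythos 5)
Your proof is correct, and it takes a genuinely different route from the paper's. The paper sorts the primes of $\cQ_{K, f}^+$ by increasing $\apf$ and runs an induction in which, at each step, one assumes (after possibly swapping the roles of $K$ and $K'$) that $\apfi$ is minimal among the remaining coefficients; the bound $\Real\big(c'(\psi(\chi_i))\big) \geq t(1-P)\apfi$ combined with $\Real\big(c'(\psi(\chi_i))\big) = (1-P)\apfi$ then forces the support of $\psi(\chi_i)$ to be a single prime, and a circle--line intersection argument identifies the value there as $Z$ and yields $\apfi = \aqfi$. You instead argue globally: you pin the values of $\psi(\chi_\mfp)$ on $\cQ_{K', f}^+$ down to $\{1, Z, Z^{-1}\}$ (by the same maximiser argument the paper uses in Lemma~\ref{lemma:psichi has value 1 on negative primes}, applied now to the positive primes, using $\zeta^{\pm 2} = Z^{\mp 1}$), eliminate $Z^{-1}$ and obtain $\apf = \sum_{\mfq \in S^+(\mfp)} \aqf$ by separating real and imaginary parts (which plays exactly the role of the paper's circle--line argument: two nonzero points of the circle $|w-1|=1$ on a common ray through the origin coincide), prove disjointness of the supports with $\chi_\mfp \chi_{\mfp'}^{-1}$, and then count total mass via $\chipos$ together with the $K \leftrightarrow K'$ symmetry to force every support to be a singleton. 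This avoids the sorted induction and its bookkeeping (in particular the third bullet of the paper's induction hypothesis, which maintains the ordering of the $\aqfj$); the price is that the symmetry between $K$ and $K'$ is invoked once globally at the end rather than locally inside each step. The final extension to arbitrary $\chi$, by applying Lemma~\ref{lemma:equalcoefficients} to $\chi$ and $\chi_\mfp\chi$ and subtracting, is essentially the same multiplicativity argument as the paper's, and your observation that it also matches ramification at $\mfp$ and $\phi(\mfp)$ is a correct (if unneeded) bonus.
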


\begin{proof}
Denote by $\mfp_1, \dots, \mfp_s$ the primes in $\cQ_{K, f}^+$, sorted such that 
\[
\big(a_{\mfp_1}\big)_{f/f_{\mfp_1}} \leq \big(a_{\mfp_2}\big)_{f/f_{\mfp_2}} \leq \dots \leq \big(a_{\mfp_s}\big)_{f/f_{\mfp_s}}.
\]  
We give a proof by induction. \\

\noindent\textbf{Induction hypothesis.} Suppose that for some $i \geq 1$ the following are true.
\begin{itemize}
\item For every $j < i$ there is a prime $\mfq_j$ of $K'$ lying over $p$ such that $\apfj = \aqfj$, and $\mfq_j \neq \mfq_k$ if $j \neq k$.
\item The equality $\chi(\mfp_j)^{f/f_{\mfp_j}} = \psi(\chi)(\mfq_j)^{f/f_{\mfq_j}}$ holds for all $j < i$ and all $\chi \in \Hlab[l]$ unramified at all primes lying over $\mfp$. 
\item We have 
\[
\big(a'_{\mfq_1}\big)_{f/f_{\mfq_1}} \leq \big(a'_{\mfq_2}\big)_{f/f_{\mfq_2}} \leq \dots \leq \big(a'_{\mfq_{i-1}}\big)_{f/f_{\mfq_{i-1}}} \leq \min \big\{\aqf: \mfq \in \cQ_{K', f}^+\backslash \{ \mfq_1, \dots, \mfq_{i-1}\}\big\}.
\]
\end{itemize}
Note that the hypothesis is empty for $i = 1$.\\

\noindent \textbf{Induction step.}
Without loss of generality we assume that (one can swap the roles of $K$ and $K'$ if necessary):
\[
\apfi \leq \min \big\{\aqf: \mfq \in \cQ_{K', f}^+\backslash \{ \mfq_1, \dots, \mfq_{i-1}\}\big\}
\] 

Let $\chi_i$ be shorthand for $\chi_{\mfp_i}$ as in Definition~\ref{definition: chimfp}. Note that $c(\chi_i) = (1 - Z) \apfi$. \\

\noindent The character $\psi(\chi_i)$ has value $1$ on all primes $\mfq_1, \dots, \mfq_{i-1}$ by the induction hypothesis. By Lemma~\ref{lemma:psichi has value 1 on negative primes}, for any prime $\mfq \in \cQ_{K', f}^-$ we have $\psi(\chi_i)(\mfq)^{f/f_{\mfq}} = 1$.

We use this to find a lower bound on $c'(\psi(\chi_i))$. Let $t$ be the cardinality of the set $\{\mfq \in \cQ_{K', f}^+: \psi(\chi_i)(\mfq)^{f/f_{\mfq}} \neq 1\}$. Then
\[
\text{Re}\big(c'(\psi(\chi_i))\big) = \sumfpx \Big(1 - \text{Re}\big(\psi(\chi_i)(\mfq)^{f/f_{\mfq}}\big)\Big) \aqf \geq t \cdot (1 - P)\apfi,
\]
as $\psi(\chi_i)(\mfq)^{f/f_{\mfq}} \neq 1$ implies that 
\[
\Big(1 - \text{Re}\big(\psi(\chi_i)(\mfq)^{f/f_{\mfq}}\big)\Big) \aqf \geq (1 - P) \aqf \geq (1 - P)\apfi.
\]

By Lemma~\ref{lemma:c is equal} we have $\text{Re}\big(c'(\psi(\chi_i)\big) = \text{Re}\big(c(\chi_i)\big) = (1 - P)\apfi$, hence $t = 1$. Hence there is a unique prime $\mfq_i \in \cQ_{K', f}^+$ such that $\psi(\chi_i)(\mfq_i)^{f/f_{\mfq_i}} \neq 1$. It follows that $c'(\psi(\chi_i)) = \Big(1 - \psi(\chi_i)(\mfq_i)^{f/f_{\mfq_i}}\Big) \aqfi$.

\noindent From $c(\chi_i) = c'(\psi(\chi_i))$ we find 
\[
(1 - Z)\apfi = (1 - \psi(\chi_i)(\mfq_i)^{f/f_{\mfq_i}}) \aqfi.
\]
We claim and prove that this implies that 
\[
\psi(\chi_i)(\mfq_i)^{f/f_{\mfq_i}} = Z = \chi_i(\mfp_i)^{f/f_{\mfp_i}} \text{ \,\,\,and\,\,\, } \apfi = \aqfi.
\]

\noindent Both $1 - Z$ and $1 - \psi(\chi_i)(\mfq_i)^{f/f_{\mfq_i}}$ lie on a circle with center point $1$ and radius $1$. 

As $\apfi$ and $\aqfi$ are real numbers unequal to zero, $1 - Z$ and $1 - \psi(\chi_i)(\mfq_i)^{f/f_{\mfq_i}}$ lie on a line through the origin. However, this line and circle intersect in only two points, one of which is the origin.

Because $1 - Z \neq 0 \neq 1 - \psi(\chi_i)(\mfq_i)^{f/f_{\mfq_i}}$, both must be equal to the second intersection point, hence $1 - Z = 1 - \psi(\chi_i)(\mfq_i)^{f/f_{\mfq_i}}$. It follows immediately that $\apfi = \aqfi$. \\

\noindent Summarising, we have proven that for any $\mfq \in \cQ_{K',f}$
\begin{equation} \label{equation: properties of psichii}
\psi(\chi_i)(\mfq)^{f/f_{\mfq}} =  \begin{cases}  
    Z & \text{if  } \mfq = \mfq_i; \\
    1 & \text{otherwise,}\\
  \end{cases} 
\end{equation}
and that $\apfi = \aqfi$. Furthermore, note
\[
\aqfi = \apfi \leq \min \big\{\aqf: \mfq \in \cQ_{K', f}^+\backslash \{ \mfq_1, \dots, \mfq_{i-1}\}\big\}.
\]
To complete the induction step, we check that for any $\chi \in \Hlab[l]$ we have 
\[
\chi(\mfp_i)^{f/f_{\mfp_i}} = \psi(\chi)(\mfq_i)^{f/f_{\mfq_i}}.
\] 
For this we use the multiplicativity of $\psi$; in particular, multiplying $\chi$ with $\chi_i$ changes only the value at a single prime in $\cQ_{K, f}$ (namely $\mfp_i$). By what we have just proven, multiplying with $\psi(\chi_i)$ has a similar effect; it changes only the value at a single prime in $\cQ_{K', f}$.\\

\noindent Indeed, for any $\mfp \in \cQ_{K, f}$ we have
\[
(\chi \chi_i)(\mfp)^{f/f_{\mfp}} = 
\begin{cases}
\chi(\mfp)^{f/f_{\mfp}} & \text{ if } \mfp \neq \mfp_i;\\
Z\chi(\mfp)^{f/f_{\mfp}} & \text{ if } \mfp = \mfp_i.
\end{cases}
\]
A quick calculation shows $c(\chi \chi_i) = c(\chi) + (1 - Z)\chi(\mfp_i)^{f/f_{\mfp_i}} \apfi$. \\

\noindent Similarly, by~(\ref{equation: properties of psichii}) and multiplicativity of $\psi$, for any $\mfq \in \cQ_{K',f}$ we have
\[
\psi(\chi \chi_i)(\mfq)^{f/f_{\mfq}} = 
\begin{cases}
\psi(\chi)(\mfq)^{f/f_{\mfq}} & \text{ if } \mfq \neq \mfq_i;\\
Z\chi(\mfq)^{f/f_{\mfq}} & \text{ if } \mfq = \mfq_i.
\end{cases}
\]
Hence we have $c'(\psi(\chi \chi_i)) = c'(\psi(\chi)) + (1 - Z)\psi(\chi)(\mfq_i)^{f/f_{\mfq_i}} \aqfi$.

As $c(\chi \chi_i) = c'(\psi(\chi \chi_i))$ and $c(\chi) = c'(\psi(\chi))$ it follows that
\[
 (1 - Z)\chi(\mfp_i)^{f/f_{\mfp_i}} \apfi = (1 - Z)\psi(\chi)(\mfq_i)^{f/f_{\mfq_i}} \aqfi.
\]
We already proved that $\apfi = \aqfi$, thus $\chi(\mfp_i)^{f/f_{\mfp_i}} = \psi(\chi)(\mfq_i)^{f/f_{\mfq_i}}$.

\noindent This concludes the proof by induction. The lemma follows by setting $\phi(\mfp_i) := \mfq_i$.
\end{proof}

\begin{remark}
As the equality $\chi(\mfp)^{f/f_{\mfp}} = \psi(\chi)(\phi(\mfp))^{f/f_{\phi(\mfp)}}$ holds for any $\chi \in \Hlab[l]$ that is unramified at all primes lying over $p$, the map $\phi$ is well-defined and independent of the choice of $\chi_i$. 
\end{remark}

\begin{lemma}
The map $\phi$, defined in the previous lemma, is a bijection $\cQ_{K, f}^+ \to \cQ_{K', f}^+$. 
\end{lemma}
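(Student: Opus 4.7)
The plan is to establish the bijectivity of $\phi$ in two short steps: injectivity is essentially already built into the inductive construction, and surjectivity then follows from a symmetry argument combined with the finiteness of the sets involved.

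First I would record that $\phi$ is injective. This is immediate from the inductive construction in Lemma~\ref{lemma: there is a prime map}: at step $i$ of that induction, the image $\mfq_i = \phi(\mfp_i)$ is explicitly required (via the first bullet of the induction hypothesis) to be distinct from each $\mfq_j$ with $j < i$, and it is shown to lie in $\cQ_{K', f}^+$. Hence $\phi$ is a well-defined injection $\cQ_{K, f}^+ \to \cQ_{K', f}^+$.

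Next I would deduce surjectivity by exploiting the symmetry of the hypothesis of Theorem~\ref{theorem:local factors are equal}. The equality $L(A/K, \chi, s) = L(A'/K', \psi(\chi), s)$ for every $\chi \in \Hlab[l]$ is equivalent, since $\psi$ is a group isomorphism, to the equality $L(A'/K', \chi', s) = L(A/K, \psi^{-1}(\chi'), s)$ for every $\chi' \in \Hlabx[l]$. Running the entire construction of Lemma~\ref{lemma: there is a prime map} with the roles of $(A/K, \psi)$ and $(A'/K', \psi^{-1})$ interchanged therefore yields an injection $\phi': \cQ_{K', f}^+ \to \cQ_{K, f}^+$. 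Since both $\cQ_{K, f}^+$ and $\cQ_{K', f}^+$ are finite (being subsets of the finite sets $\cP_{K, p}$ and $\cP_{K', p}$), the existence of injections in both directions forces $|\cQ_{K, f}^+| = |\cQ_{K', f}^+|$, whence the injection $\phi$ is a bijection.

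I do not anticipate any real obstacle: the substantive work was already carried out in Lemma~\ref{lemma: there is a prime map}, and the bijectivity statement is a formal consequence of that construction together with its symmetric counterpart. The only point worth a sanity check is that every ingredient of the inductive construction (existence of the maximising character $\chipos$ via Lemma~\ref{lemma:grunwald-wang in any order}, the key identity $c(\chi) = c'(\psi(\chi))$ of Lemma~\ref{lemma:c is equal}, Lemma~\ref{lem:unram->unram}, and the properties of the roots of unity $Z$ and $\zeta$) is manifestly symmetric under the exchange of $(K, A, \psi)$ and $(K', A', \psi^{-1})$, so the construction does indeed produce an injection in the reverse direction.
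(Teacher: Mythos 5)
Your proposal is correct and follows essentially the same route as the paper: injectivity is read off from the construction in Lemma~\ref{lemma: there is a prime map} (equivalently, from the fact that $\psi(\chi_i)$ takes the value $Z$ at $\phi(\mfp_i)$ but $1$ at $\phi(\mfp_j)$ for $j \neq i$), and surjectivity follows by symmetry of the hypotheses in $K$ and $K'$. The paper states the symmetry step in one line; your explicit finite-set cardinality argument is just a fleshed-out version of the same idea.
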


\begin{proof}
Note that it is injective: if $j \neq i$, then we have
\[
\psi(\chi_i)(\phi(\mfp_i))^{f/f_{\phi(\mfp_i)}} = Z,
\]
while $\psi(\chi_i)(\phi(\mfp_j))^{f/f_{\phi(\mfp_j)}} = 1$. By symmetry of $K$ and $K'$, $\phi$ is a bijection.
\end{proof}

\begin{lemma}\label{lemma:phi is consistent}
The map $\phi$ extends $\phi_{<f}$ to a bijection $\phi_{\leq f}: \cQ_{K, \leq f} \to \cQ_{K', \leq f}$. 
\end{lemma}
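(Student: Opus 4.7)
The plan is to show that $\phi_{<f}$ and $\phi$ agree on the overlap $\cQf \cap \cQ_{K,f}$ of their domains; once this compatibility is established, the combined map $\phi_{\leq f}$ is automatically well-defined, and bijectivity follows by a cardinality and surjectivity argument on $\cQ_{K', \leq f} = \cQfx \cup \cQ_{K',f}$.

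For the compatibility, fix $\mfp \in \cQf \cap \cQ_{K,f}$ and set $\mfq := \phi(\mfp)$, $\mfq' := \phi_{<f}(\mfp)$. Because $\phi_{<f}$ is built up inductively one norm at a time it is norm-preserving, so $f_{\mfq'} = f_\mfp$. Since $\mfp \in \cQf$, pick an index $1 \leq i_0 < f/f_\mfp$ (note $i_0 \leq 2d$) with $\ap_{i_0} \neq 0$; matching the coefficient of $T^{i_0 f_\mfp}$ on both sides of the induction equality $L_{\mfp,<f}(A/K,\chi,T) = L_{\mfq',<f}(A'/K',\psi(\chi),T)$ produces $\chi(\mfp)^{i_0}\ap_{i_0} = \psi(\chi)(\mfq')^{i_0}\big(a'_{\mfq'}\big)_{i_0}$ for every $\chi \in \Hlab[l]$. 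Taking $\chi$ trivial gives $\big(a'_{\mfq'}\big)_{i_0} = \ap_{i_0} \neq 0$, and since $\gcd(i_0,l) = 1$ (because $l > 2d \geq i_0$) we conclude $\chi(\mfp) = \psi(\chi)(\mfq')$ for every $\chi \in \Hlab[l]$. On the other hand, Lemma~\ref{lemma: there is a phi with properties} gives $f_\mfq = f_\mfp$ together with $\chi(\mfp)^{f/f_\mfp} = \psi(\chi)(\mfq)^{f/f_\mfq}$, so combining yields $\psi(\chi)(\mfq')^{f/f_\mfp} = \psi(\chi)(\mfq)^{f/f_\mfp}$; the coprimality $\gcd(f/f_\mfp, l) = 1$ strips the exponent, giving $\psi(\chi)(\mfq') = \psi(\chi)(\mfq)$ for every $\chi$. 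If $\mfq' \neq \mfq$, Lemma~\ref{lemma:grunwald-wang in any order} applied to $K'$ produces $\chi' \in \Hlabx[l]$ with $\chi'(\mfq') \neq \chi'(\mfq)$, and setting $\chi := \psi^{-1}(\chi')$ contradicts the previous display; hence $\mfq = \mfq'$.

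Now define $\phi_{\leq f}$ to equal $\phi_{<f}$ on $\cQf$ and $\phi$ on $\cQ_{K,f}$. Well-definedness on $\cQf \cap \cQ_{K,f}$ is precisely the compatibility just proved. The image is $\cQfx \cup \cQ_{K',f} = \cQ_{K',\leq f}$, so $\phi_{\leq f}$ is surjective. The same compatibility argument run with $K$ and $K'$ swapped shows $\phi_{<f}$ bijects $\cQf \cap \cQ_{K,f}$ onto $\cQfx \cap \cQ_{K',f}$; inclusion--exclusion then yields $|\cQ_{K,\leq f}| = |\cQ_{K',\leq f}|$, and together with surjectivity this forces injectivity. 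The main obstacle is the compatibility step, where one must tie together the partial degree information encoded in the induction hypothesis with the specific degree-$f$ information carried by $\phi$; the hypothesis $l > 2d$ is what makes both coprimality conditions work, allowing us to pass from power equalities to honest equalities of character values that Grunwald--Wang can then separate.
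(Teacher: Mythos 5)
Your proof is correct in substance and follows essentially the same route as the paper: both arguments combine the coefficient identity below degree $f$ supplied by the induction hypothesis with the degree-$f$ identity of Lemma~\ref{lemma: there is a phi with properties}, use that the relevant exponents are at most $2d < l$ to strip powers off $l$-th roots of unity, and then separate the two candidate images by a Grunwald--Wang character on the $K'$ side (the paper picks the separating character first and derives a contradiction directly; you first derive $\psi(\chi)(\mfq')=\psi(\chi)(\mfq)$ for all $\chi$ and then separate). One inaccuracy to flag: neither the induction hypothesis nor Lemma~\ref{lemma: there is a phi with properties} asserts that $\phi_{<f}$ or $\phi$ is norm-preserving --- the induction is organized by the degree $f$ (and, within each step, by the size of the coefficients), not by norm, and norm-preservation is only established afterwards in Corollary~\ref{cor:wantedprops} from the full equality of local factors --- so the identities $f_{\mfq'}=f_\mfp=f_\mfq$ you invoke are not actually available at this point. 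This is harmless: matching the coefficient of $T^{i_0 f_\mfp}$ (nonzero by taking $\chi$ trivial) still yields $\chi(\mfp)^{i_0}=\psi(\chi)(\mfq')^{\,i_0 f_\mfp/f_{\mfq'}}$ with both exponents coprime to $l$, and the separating character $\chi'$ with $\chi'(\mfq')=1$ and $\chi'(\mfq)=Z$ then forces $1=Z^{f/f_\mfq}\neq 1$ exactly as in your argument. Your closing bookkeeping for bijectivity of the glued map is fine, and somewhat more careful than the paper's one-line dismissal of that point.
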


\begin{proof}
As $\phi$ is a bijection $\cQ_{K, f} \to \cQ_{K', f}$ and $\phi_{<f}$ is a bijection $\cQ_{K, < f} \to \cQ_{K', < f}$ it suffices to prove that for any $\mfp \in \cQ_{K, <f} \cap \cQ_{K, f}$ we have $\phi_{<f}(\mfp) = \phi(\mfp)$.

We argue by contradiction. Suppose there is a prime  $\mfp \in \cQ_{K, <f} \cap \cQ_{K, f}$ for which $\phi_{<f}(\mfp) \neq \phi(\mfp)$. For convenience, denote $\mfq_1:= \phi_{<f}(\mfp)$ and $\mfq_2:= \phi(\mfp)$. As $\cQ_{K, <f} = \bigcup_{1 \leq j \leq f} \cQ_{K, j}$, there is a $1\leq j < f$ such that $\mfp \in \cQ_{K, j}$.

By Lemma~\ref{lemma:grunwald-wang in any order} there exists a character $\chi' \in \Hlabx[l]$ such that 
\begin{align*}
\chi'(\mfq_1) &= 1 \text{ and }\\
\chi'(\mfq_2) &= Z.
\end{align*}
It follows from the induction hypothesis that $\psi^{-1}(\chi')(\mfp)^{j/f_{\mfp}} = 1$. This, along with the fact that $\psi^{-1}(\chi')(\mfp)$ is an $l^\text{th}$ root of unity and $j/f_{\mfp} < l$, implies that $\psi^{-1}(\chi')(\mfp) = 1$. Moreover, by  Lemma~\ref{lemma: there is a phi with properties} and $f/f_{\mfq_2} < l$ we have $\psi^{-1}(\chi')(\mfp)^{f/f_{\mfp}} = Z^{f/f_{\mfq_2}} \neq 1$. This is a contradiction.
\end{proof}

\begin{lemma}\label{lemma:induction extended}
For any $\chi \in \Hlab[l]$ and $\mfp \in \cQ_{K, f}^+$ we have 
\[
L_{\mfp, \leq f}(A/K, \chi, T) = L_{\phi(\mfp), \leq f}(A'/K', \psi(\chi), T).
\]
\end{lemma}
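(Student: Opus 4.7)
The plan is to decompose each local $L$-series at $\mfp$ and at $\phi(\mfp)$ as the sum of its truncation below degree $f$ and its single new term in degree $f$, and then match the two pieces separately. Since $\mfp \in \cQ_{K, f}^+$ we have $f_\mfp \mid f$, so I would write
\[
L_{\mfp, \leq f}(A/K, \chi, T) = L_{\mfp, <f}(A/K, \chi, T) + \chi(\mfp)^{f/f_\mfp}\apf T^f,
\]
and the analogous decomposition holds on the $K'$ side, using $f_{\phi(\mfp)} \mid f$ (which follows from $\phi(\mfp) \in \cQ_{K', f}^+$ by construction of $\phi$).

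The degree-$f$ term is already handled: Lemma~\ref{lemma: there is a phi with properties} provides both $\apf = \aphipf$ and $\chi(\mfp)^{f/f_\mfp} = \psi(\chi)(\phi(\mfp))^{f/f_{\phi(\mfp)}}$, so the two $T^f$ coefficients agree. Thus the bulk of the work is to match the truncations $L_{\mfp, <f}$ and $L_{\phi(\mfp), <f}$, and I would split this into two cases depending on whether $\mfp$ lies in $\cQ_{K, <f}$ or not.

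If $\mfp \in \cQ_{K, <f}$, then Lemma~\ref{lemma:phi is consistent} gives $\phi(\mfp) = \phi_{<f}(\mfp)$, and the induction hypothesis applied to $\mfp$ directly yields $L_{\mfp, <f}(A/K, \chi, T) = L_{\phi(\mfp), <f}(A'/K', \psi(\chi), T)$. If instead $\mfp \notin \cQ_{K, <f}$, then by definition of $\cQ_{K, <f}$ all $\ap_i$ with $1 \leq i < f/f_\mfp$ vanish, so $L_{\mfp, <f}(A/K, \chi, T) = 1$. It then suffices to argue that $\phi(\mfp) \notin \cQ_{K', <f}$; otherwise both $\mfp$ and $\phi_{<f}^{-1}(\phi(\mfp)) \in \cQ_{K, <f}$ would be preimages of $\phi(\mfp)$ under the bijection $\phi_{\leq f}$ of Lemma~\ref{lemma:phi is consistent}, and they would be distinct since $\mfp \notin \cQ_{K, <f}$, contradicting injectivity.

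The main obstacle, modest as it is, will be the bookkeeping in the second case: one must use both the consistency of $\phi$ with $\phi_{<f}$ on the overlap and the bijectivity of the glued map $\phi_{\leq f}$. Once these are invoked, the lemma is essentially a recombination of the induction hypothesis with Lemma~\ref{lemma: there is a phi with properties}, and the symmetric statement for $\mfp \in \cQ_{K, f}^-$ follows by the swap indicated in the remark preceding the induction step.
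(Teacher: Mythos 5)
Your proof is correct and follows essentially the same route as the paper: split off the degree-$f$ coefficient, which is matched by Lemma~\ref{lemma: there is a phi with properties}, and match the lower-degree truncations via the induction hypothesis together with the consistency of $\phi$ with $\phi_{<f}$. You are in fact slightly more careful than the paper, whose proof silently applies the induction hypothesis even when $\mfp \notin \cQ_{K,<f}$; your bijectivity argument showing $\phi(\mfp) \notin \cQ_{K',<f}$ in that case (so both truncations equal $1$) fills that small gap.
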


\begin{proof}
By the induction hypothesis we have 
\[
L_{\mfp, < f}(A/K, \chi, T) = L_{\phi(\mfp), < f}(A'/K', \psi(\chi), T).
\]
We therefore need only be concerned with the coefficient of $T^f$. The coefficient of $T^f$ in $L_{\mfp, < f}(A, K, \chi, T)$ is equal to 
\[
\chi(\mfp)^{f/f_{\mfp}} \apf,
\]
while the coefficient of $T^f$ in $L_{\phi(\mfp), < f}(A'/K', \psi(\chi), T)$ equals 
\[
\psi(\chi)(\phi(\mfp))^{f/f_{\phi(\mfp)}} \aphipf.
\]
By Lemma~\ref{lemma: there is a phi with properties}, these coefficients are equal.
\end{proof}

This concludes the proof of Theorem~\ref{theorem:local factors are equal}.\qed

\section{Isomorphisms of number fields with certain bijections of primes}\label{section:prime bijections}
This section is devoted to proving Theorems~\ref{theoremA:abelian varieties are isogenous through L-series} and~\ref{theoremA: elliptic curves are isogenous through L-series}. For this we use the main results of Sections~\ref{section:proof non-CM elliptic curves} and~\ref{section:abelian varieties}, namely Theorems~\ref{theorem: from L-functions of elliptic curves to prime bijection} and~\ref{theorem:local factors are equal}. \new{The case of Theorem~\ref{theoremA: elliptic curves are isogenous through L-series} where $\text{End}_{\overline{K}}(E) \not\subseteq K$ requires a separate proof.}

\begin{proposition}
\new{Suppose either the conditions of Theorem~\ref{theoremA:abelian varieties are isogenous through L-series} hold, or those of Theorem~\ref{theoremA: elliptic curves are isogenous through L-series} and additionally $\textup{End}_{\overline{K}}(E) \subseteq K$. Then there is a unique isomorphism $\sigma: K \DistTo K'$ whose induced bijection of primes matches $\phi$.}
\end{proposition}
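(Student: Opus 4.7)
The strategy is to invoke \cite[Thms.~A \& B]{GDTHJS}, which in our language guarantees the following: if $\phi$ is a norm-preserving bijection between density-one subsets of the primes of $K$ and $K'$, and there exists a group isomorphism $\psi: \Hlab[l] \DistTo \Hlabx[l]$ satisfying $\chi(\mfp) = \psi(\chi)(\phi(\mfp))$ for all $\chi \in \Hlab[l]$ and all $\mfp$ in the domain of $\phi$, then there is a field isomorphism $\sigma: K \DistTo K'$ whose induced map on primes coincides with $\phi$ outside a density-zero subset of $\cP_K$.

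The first task is to check this compatibility hypothesis on a density-one set in each of the two settings. Under the hypotheses of Theorem~\ref{theoremA:abelian varieties are isogenous through L-series}, Corollary~\ref{cor:wantedprops} yields $\chi(\mfp) = \psi(\chi)(\phi(\mfp))$ for every $\mfp \in \cS$, and $\cS$ is cofinite in $\cP_K$, so the hypothesis is immediate. Under the hypotheses of Theorem~\ref{theoremA: elliptic curves are isogenous through L-series}, Theorem~\ref{theorem: from L-functions of elliptic curves to prime bijection} only provides the compatibility at primes $\mfp \in \cS$ with $a_\mfp \neq 0$; I would therefore show that $\{\mfp \in \cS : a_\mfp = 0\}$ has density zero when $\textup{End}_{\overline{K}}(E) \subseteq K$. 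In the non-CM case this is Serre's open image theorem, which forces the supersingular primes to have density zero. In the CM case, $\textup{End}_{\overline{K}}(E) \subseteq K$ means the CM field $F$ is contained in $K$; then every prime $\mfp \subset K$ of residue degree $1$ over $\Qz$ lies over a split prime of $F$ and so is ordinary, giving $a_\mfp \neq 0$ by Deuring's criterion. The remaining primes, of residue degree at least $2$, already form a density-zero subset of $\cP_K$.

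With the hypothesis of \cite{GDTHJS} verified on a density-one subset, that result produces the required field isomorphism $\sigma: K \DistTo K'$ whose induced bijection of primes agrees with $\phi$ off a density-zero set. Uniqueness is then a standard Chebotarev density argument: if two isomorphisms $\sigma_1, \sigma_2: K \DistTo K'$ both induce $\phi$ on a density-one set of primes, then $\sigma_2^{-1} \sigma_1 \in \Aut(K)$ fixes a density-one set of primes of $K$, and must therefore be the identity.

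The main technical obstacle is the density-zero claim for $\{\mfp : a_\mfp = 0\}$ in the CM sub-case of the elliptic-curve setting; this is exactly why the hypothesis $\textup{End}_{\overline{K}}(E) \subseteq K$ is imposed. When that hypothesis fails, the set of primes with $a_\mfp = 0$ can have positive density and the character-value compatibility is no longer rich enough to single out $\sigma$, which is why that case has to be handled by a separate argument.
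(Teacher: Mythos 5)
Your proposal is correct and follows essentially the same route as the paper: apply Corollary~\ref{cor:wantedprops} (abelian variety case) or Theorem~\ref{theorem: from L-functions of elliptic curves to prime bijection} together with Serre's open image theorem and Deuring's criterion (elliptic curve case) to get the compatibility $\chi(\mfp)=\psi(\chi)(\phi(\mfp))$ on a density-one set, and then invoke the reconstruction theorem of \cite{GDTHJS} to produce the unique $\sigma$. The only cosmetic differences are that the paper cites \cite[Thm.~3.1]{GDTHJS} rather than Theorems A \& B, and phrases the CM sub-case contrapositively ($a_\mfp=0$ forces supersingular reduction, hence $p$ inert in $L$, hence $f_\mfp\geq 2$), which is equivalent to your argument.
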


\begin{proof}
In the case of Theorem~\ref{theoremA:abelian varieties are isogenous through L-series}, we have by Theorem~\ref{theorem:local factors are equal} and Corollary~\ref{cor:wantedprops} an injective norm-preserving map $\phi: \cS \to \cP_{K'}$ such that
\[
\chi(\mfp) = \psi(\chi)(\phi(\mfp))
\]
for any $\mfp \in \cS$ and any character $\chi \in \Hlab[l]$, where $\cS$ consists of all primes of sufficiently high norm. In particular, $\cS$ has density one. 

In the case of Theorem~\ref{theoremA: elliptic curves are isogenous through L-series}, Theorem~\ref{theorem: from L-functions of elliptic curves to prime bijection} guarantees that for $\mfp \in \cP_K$ of sufficiently high norm with $a_\mfp \neq 0$ we have 
\[
\chi(\mfp) = \psi(\chi)(\phi(\mfp)).
\]
The condition that $\text{End}_{\overline{K}}(E) \subseteq K$ is equivalent to the condition that $E/K$ either has no complex multiplication, or complex multiplication by a quadratic extension contained in $K$. We consider both cases.

If $E$ does not have complex multiplication, the set $\{ \mfp \in \cP_{K}: a_{\mfp} \neq 0\}$ has density one by Serre's open image theorem \cite{serreopen}. 

If $E$ does have complex multiplication by a quadratic imaginary field $L$, then Deuring's Criterion \cite{deuring}
states that if $p$ is unramified in $L$ and $\mfp$ is a prime of $K$ lying over $p$ at which $E$ has good reduction (which holds for all but finitely many $\mfp$), then 
\[
E \text{ has supersinglar reduction at $\mfp$} \Leftrightarrow p \text{ is inert in $L/\Qz$}.
\]
If $L \subseteq K$, then the above equivalence implies that $E$ can only have supersingular reduction at $\mfp$ if $f_{\mfp} \geq 2$, hence the set of such primes has density zero. Because $a_{\mfp} = 0$ implies that $E$ has supersingular reduction at $\mfp$ (see \cite[Ch.\ V, \S 4, Thm.\ 4.1]{silvermanaec}), the set
\[
\{ \mfp \in \cP_{K}: a_{\mfp} \neq 0\}
\]
has density one.

In both cases we have an isomorphism $\psi: \Hkab \to \Hkabx$ with a bijection $\phi$ on a density one subset of the primes. By \cite[Thm.\ 3.1]{GDTHJS}, we know that there exists a unique $\sigma: K \to K'$ whose associated bijection of primes matches $\phi$. 
\end{proof}

\begin{proposition}\label{proposition:CM_case}
\new{Suppose the conditions of Theorem~\ref{theoremA: elliptic curves are isogenous through L-series} hold and additionally $\textup{End}_{\overline{K}}(E) \not\subseteq K$. Call the field of complex multiplication $L$. Then there is a unique isomorphism $\sigma: K \DistTo K'$ whose induced bijection of primes matches $\phi$ on almost all primes in $\textup{Spl}(KL/K)$.}
\end{proposition}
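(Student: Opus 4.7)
The plan is to reduce to the case already treated in the previous proposition by base-changing to $KL$. First, I would apply Deuring's criterion to identify the primes where $a_\mfp \neq 0$: for $\mfp \mid p$ of good reduction with $f_\mfp = 1$ and $p$ unramified in $L$, we have $a_\mfp = 0$ iff $p$ is inert in $L/\Qz$. Since $L \not\subseteq K$ is an imaginary quadratic field, $KL/K$ is a quadratic extension, and a degree-one prime $\mfp$ of $K$ lies in $\textup{Spl}(KL/K)$ iff the underlying rational prime $p$ splits in $L/\Qz$. The primes with $f_\mfp \geq 2$ have density zero, so $\{\mfp : a_\mfp \neq 0\}$ and $\textup{Spl}(KL/K)$ agree up to density zero, and both have density $1/2$ by Chebotarev. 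Theorem~\ref{theorem: from L-functions of elliptic curves to prime bijection} then supplies $\phi$ together with the compatibility $\chi(\mfp) = \psi(\chi)(\phi(\mfp))$ on $\{\mfp : a_\mfp \neq 0\}$, which includes almost every prime in $\textup{Spl}(KL/K)$. By the symmetric argument for $E'/K'$ with CM field $L'$, the image $\phi(\textup{Spl}(KL/K))$ lies almost entirely in $\textup{Spl}(K'L'/K')$.

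Second, I would extend scalars to the quadratic extensions $KL$ and $K'L'$. Over these bases the curves $E_{KL}$ and $E'_{K'L'}$ have $\text{End}_{\overline{KL}}(E_{KL}) \subseteq KL$ and symmetrically for $E'_{K'L'}$, so the previous proposition becomes available once its hypotheses are verified there. The character isomorphism $\psi$ inflates under restriction to a map $\Hab{KL}[2] \DistTo \Hab{K'L'}[2]$, and the fact that each prime $\mfp \in \textup{Spl}(KL/K)$ lifts to exactly two primes above in $KL$ lets us, after choosing an identification $L \cong L'$ (unique up to the nontrivial Galois automorphism), promote $\phi$ to a density-one bijection $\tilde\phi: \cP_{KL} \to \cP_{K'L'}$ compatible with the inflated $\psi$. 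Applying the previous proposition produces an isomorphism $\tau: KL \DistTo K'L'$.

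The main obstacle is the descent from $\tau$ to the desired isomorphism $\sigma: K \DistTo K'$, i.e. showing that $\tau(K) = K'$. This should follow from equivariance of $\tau$ with respect to $\Gal(KL/K)$ and $\Gal(K'L'/K')$: these Galois groups are detected intrinsically by the prime bijection on $\textup{Spl}(KL/K)$ via class field theory, since primes split in $KL/K$ are precisely those on which the nontrivial element of $\Gal(KL/K)$ acts trivially, and $\tilde\phi$ transports this structure to the $K'L'$-side. Once $\sigma := \tau|_K$ is available, its compatibility with $\phi$ on almost every prime in $\textup{Spl}(KL/K)$ is immediate, and uniqueness follows because any two isomorphisms $K \DistTo K'$ that agree with $\phi$ on a positive-density set of primes must coincide, their composition being an automorphism of $K'$ fixing a density-positive set of primes and thus the identity.
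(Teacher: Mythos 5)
Your first paragraph is sound and coincides with the paper's opening step: Deuring's criterion shows that the degree-one primes in $\textup{Spl}(KL/K)$ are (up to finitely many exceptions) ordinary, hence $a_{\mfp}\neq 0$ there, so Theorem~\ref{theorem: from L-functions of elliptic curves to prime bijection} gives $\chi(\mfp)=\psi(\chi)(\phi(\mfp))$ on almost all of $\textup{Spl}(KL/K)$. At that point the paper simply invokes \cite[Thm.~B]{GDTHJS}, which is tailored to exactly this situation: an isomorphism $\psi:\Hkab[2]\DistTo\Hkabx[2]$ together with a compatible prime map defined only on (almost all of) the split primes of some extension $N/K$ already forces a unique isomorphism $\sigma:K\DistTo K'$ matching $\phi$ there. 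Your proposal replaces this citation with a base-change argument, and that is where it breaks down.

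The gap is in your second paragraph. The hypotheses you would need over $KL$ are not available. There is no natural isomorphism $\Hab{KL}[2]\DistTo\Hab{K'L'}[2]$ induced by $\psi$: the restriction map $\Hkab[2]\to\Hab{KL}[2]$ is neither injective (the character cutting out $KL/K$ dies) nor surjective (most quadratic characters of $KL$ do not come from $K$), so $\psi$ does not ``inflate.'' Even for characters that do come from $K$, the hypothesis only gives equality of $L(E/K,\chi,s)$ with $L(E'/K',\psi(\chi),s)$, not of the twisted $L$-series of $E_{KL}$ over $KL$ by arbitrary quadratic characters of $KL$, which is what the previous proposition (via Theorem~\ref{theorem: from L-functions of elliptic curves to prime bijection} applied over $KL$) would require. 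Your lift $\tilde\phi$ is also not well defined: for $\mfp\in\textup{Spl}(KL/K)$ there is no canonical matching of the two primes of $KL$ above $\mfp$ with the two primes of $K'L'$ above $\phi(\mfp)$, and choosing an identification $L\cong L'$ does not resolve this. Finally, you use the CM field $L'$ of $E'$ without first showing $E'$ has complex multiplication, and the descent $\tau(K)=K'$ is asserted rather than proved. All of these difficulties are precisely what \cite[Thm.~B]{GDTHJS} is designed to absorb; without it, your route does not close.
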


\begin{proof}
\new{Let $\mfp \in \text{Spl}(KL/K)$ be a prime with ramification and inertia degree $1$ at which $E$ has good reduction and denote $p := \mfp \cap \Zz$. As $\mfp$ splits in $KL/K$ and $\mfp$ has inertia degree $1$, $p$ splits in $L/\Qz$. Hence $E$ has ordinary reduction at $\mfp$ and therefore $a_{\mfp} \neq 0$. Theorem~\ref{theorem: from L-functions of elliptic curves to prime bijection} now states that
\[
\chi(\mfp) = \psi(\chi)(\phi(\mfp))
\]
for any such $\mfp$ and any $\chi \in \Hkab[l]$. Now \cite[Theorem B]{GDTHJS} guarantees the result.}
\end{proof}

\begin{proof}[Proofs of Theorem~\ref{theoremA:abelian varieties are isogenous through L-series} and Theorem~\ref{theoremA: elliptic curves are isogenous through L-series}]
From this proposition it follows that if the conditions of Theorem~\ref{theoremA:abelian varieties are isogenous through L-series} hold, then for any $\mfp \in \cS$ we have
\[
L_{\phi(\mfp)}(A^\sigma/K', s) = L_{\sigma(\mfp)}(A^\sigma/K', s) = L_{\mfp}(A/K, s) = L_{\phi(\mfp)}(A'/K', s).
\]
As $\phi(\cS) = \sigma(\cS)$ has density one in the primes of $K'$, Faltings's isogeny theorem implies that $A^\sigma/K'$ and $A'/K'$ are isogenous. Similarly, if the conditions of Theorem~\ref{theoremA: elliptic curves are isogenous through L-series} hold for elliptic curves $E/K$ and $E'/K'$ \new{with $\text{End}_{\overline{K}}(E)\subseteq K$}, then the same reasoning can be used for $E$ and $E'$ instead of $A$ and $A'$ respectively, hence $E^\sigma/K'$ and $E'/K'$ are isogenous. 

\new{Now consider the case where $\text{End}_{\overline{K}}(E)\nsubseteq K$. Proposition~\ref{proposition:CM_case} states that for all primes but a density zero set in $\text{Spl}(KL/K)$ we have
\[
L_{\phi(\mfp)}(E^\sigma/K', s) = L_{\sigma(\mfp)}(E^\sigma/K', s) = L_{\mfp}(E/K, s) = L_{\phi(\mfp)}(E'/K', s).
\]
Now let $\mfp$ be a prime that does not lie over $(2)$ or $(3)$ such that $p := \mfp \cap \Zz$ is inert in $L/\Qz$, and $\mfp$ has inertia and ramification degree $1$. Then $E$ has supersingular reduction at $\mfp$ (as well as any other unramified prime lying over $p$) and by the Hasse bound it follows that $a_{\mfp} = 0$. Hence $a'_{\phi(\mfp)} = 0$ as well and therefore
\[
L_{\phi(\mfp)}(E'/K', s) = 1 + pT^2.
\]
Moreover, as $\sigma^{-1}(\phi(\mfp))$ lies over $p$ and it has inertia and ramification degree $1$, it follows that $a_{\sigma^{-1}(\phi(\mfp))} = 0$, thus
\[
L_{\phi(\mfp)}(E^\sigma/K', s) = L_{\sigma^{-1}(\phi(\mfp))}(E/K, s) = 1 + pT^2.
\]
Hence for almost all primes of inertia degree $1$ we have an equality of local factors. Faltings's isogeny theorem guarantees that $E^\sigma$ and $E'$ are isogenous. }

This concludes the proof of Theorems~\ref{theoremA:abelian varieties are isogenous through L-series} and~\ref{theoremA: elliptic curves are isogenous through L-series}.
\end{proof}

\new{We end this section with a small remark on the difference between the cases $\text{End}_{\overline{K}}(E)\subseteq K$ and $\text{End}_{\overline{K}}(E)\nsubseteq K$. The fundamental difference is that in the first case one can construct an injective norm-preserving map of primes $\phi$ such that}
\[
\psi(\chi)(\phi(\mfp)) = \chi(\mfp)
\]
\new{for all primes $\mfp$ in a set of density one, whilst in the second case this is only guaranteed on a subset of density 1/2.}

\begin{remark}\label{remark:extra_iso}
\new{Let $K = K' = \Qz$, and let $E/\Qz$ be any elliptic curve with complex multiplication by $\Qz(i)$. Let $d \in \Qz^\times / (\Qz^\times)^2$, $p$ any prime congruent to $1 \text{mod} 4$, and let $\chi_{\sqrt{d}}$ be the character associated to the extension $\Qz(\sqrt{d})/\Qz$. Consider the isomorphism $\psi$ defined as follows (see also \cite[Remark 6.17]{GDTHJS}):}
\begin{align*}
\psi: \widecheck{G}_{\Qz}[2] &\to \widecheck{G}_{\Qz}[2] \\
\chi_{\sqrt{d}} &\mapsto  
\begin{cases}
\chi_{\sqrt{d}} & \text{ if } \left(\frac{d \cdot |d|_p}{p} \right) = 1; \\
\chi_{\sqrt{-d}}& \text{ if } \left(\frac{d \cdot |d|_p}{p} \right) = -1.\\
\end{cases}
\end{align*}
\new{This map is an isomorphism that abides $\psi(\chi)(q) = \chi(q)$ for all primes $q$ congruent to $1 \text{ mod } 4$. As a result, we have for any such prime that}
\[
L_q(E/\Qz, \chi, s) = L_q(E/\Qz, \psi(\chi), s).
\]
\new{For primes $q$ congruent to $3 \text{ mod } 4$ (aside from possibly $q = 3$) we have $a_q = 0$, hence}
\[
L_q(E/\Qz, \chi, s) = 1 + qT^2 = L_q(E/\Qz, \psi(\chi), s).
\]
\new{Therefore $\psi$ meets the conditions of Theorem~\ref{theoremA: elliptic curves are isogenous through L-series}. However, it is not the identity, thus it is not induced by an isomorphism $\Qz \DistTo \Qz$.}

\new{In the case of $\text{End}_{\overline{K}}(E)\subseteq K$ one can use \cite[Theorem A]{GDTHJS} to show that all $\psi$ that meet the conditions of Theorem~\ref{theoremA: elliptic curves are isogenous through L-series} are induced by isomorphisms $K \DistTo K'$.}
\end{remark}

\section{Acknowledgements}
I would like to acknowledge the helpful suggestions and remarks from my supervisor Gunther Cornelissen, as well as the pleasant discussions we have had.


\begin{thebibliography}{99}

\bibitem{artintate}
Emil Artin and John Tate, \emph{Class Field Theory}, AMS Chelsea Publishing, Providence, RI, 2008.

\bibitem{CdSLMS}
Gunther Cornelissen, Bart de Smit, Xin Li, Matilde Marcolli and Harry Smit, \emph{Characterization of global fields by Dirichlet L-series}, Res. Numb. Th. (2019), \textbf{5}, 7.

\bibitem{CLMS}
Gunther Cornelissen, Xin Li, Matilde Marcolli and Harry Smit, \emph{Reconstructing global fields from dynamics in the abelianized Galois group}, Sel. Math. (2019), \textbf{25}, 24.

\bibitem{deuring}
Max Deuring, \emph{Die Typen der Multiplikatorenringe elliptischer Funktionenk\"orper}, Abh. Math. Sem. Hansischen Univ. (1941), \textbf{14}, 197--272.

\bibitem{faltings}
Gerd Faltings, \emph{Eindlichkeitss\"atze f\"ur abelsche Variet\"aten \"uber Zahlk\"orpern}, Inv. Math. (1983), \textbf{73}, 349--366. Erratum: Inv. Math. \textbf{75} (1984), no. 2, 381; English translation in: Arithmetic geometry (Storrs, Conn., 1984, eds.\ G.~Cornell and J.H.~Silverman), pp.\ 9--27, Springer, New York, 1986.

\bibitem{gassmann}
Fritz Ga{\ss}mann, \emph{{Bemerkungen zur Vorstehenden Arbeit von Hurwitz:
  \"Uber Beziehungen zwischen den Primidealen eines algebraischen K\"orpers und
  den Substitutionen seiner Gruppen.}}, Math. Z. (1926), \textbf{25}
  661(665)--675.

\bibitem{GDTHJS}
Harry Smit, \emph{$L$-series and isomorphisms of number fields}, arXiv:1901.06198 (2019).

\bibitem{neukirch2013algebraic}
J{\"u}rgen Neukirch, \emph{Algebraic Number Theory}, Springer-Verlag New York-Heidelberg-Berlin, 2013.

\bibitem{serreopen}
Jean-Pierre Serre, \emph{Propri\'et\'es galoisiennes des points d'ordre fini des courbes elliptiques}, Inv. Math. (1972), \textbf{15}, no.~4, 259--331.

\bibitem{serretateNOS}
Jean-Pierre Serre, John Tate, \emph{Good Reduction of Abelian Varieties}, Ann. of Math. (1968), \textbf{88}, 492--517.

\bibitem{serrecourse} Jean-Pierre Serre, \emph{A Course in Arithmetic},  Springer-Verlag New York-Heidelberg-Berlin, 1973.

\bibitem{silverberg} Alice Silverberg, \emph{Galois representations attached to points on Shimura varieties}, S\'eminaire de Th\'eorie des Nombres Paris 1990--1991, Prog. in Math. 108, Birkh\"auser, Boston (1993), 221--240.

\bibitem{silvermanaec}
Joseph H. Silverman, \emph{The Arithmetic
of Elliptic Curves}, 2nd edition, Springer-Verlag New York-Heidelberg-Berlin, 1986.

\bibitem{weil}
Andr\'e Weil, \emph{Numbers of solutions of equations in finite fields}, Bull. Amer. Math. Soc. (1959), \textbf{55}, 497--508.



%


\end{thebibliography}
\end{document}